\newtheorem{thm}{Theorem}[section]
\newtheorem{lem}[thm]{Lemma}
\newtheorem{prop}[thm]{Proposition}
\theoremstyle{definition}
\newtheorem{defn}{Definition}[section]
\theoremstyle{remark}
\newtheorem{rmk}{Remark}[section]
\newtheorem*{rmk*}{Remark}
\newtheorem*{fact*}{Fact}
\def\diam{\mathrm{diam}}
\def\Area{\mathrm{Area}}
\def\Length{\mathrm{Length}}
\def\Ric{\mathrm{Ric}}
\def\Vol{\mathrm{Vol}}
\title{Stability for the 3D Riemannian Penrose inequality}
\author{Conghan Dong}
\address{Mathematics Department, Stony Brook University, 100 Nicolls Rd, Stony Brook, NY 11794, USA}
\email{conghan.dong@duke.edu}
\begin{document}
\date{\today}

\maketitle

\begin{abstract}
We show that the Schwarzschild $3$-manifold is stable for the $3$-dimensional Riemannian Penrose inequality in the pointed measured Gromov-Hausdorff topology, modulo negligible domains and boundary area perturbations. 
\end{abstract}

\tableofcontents

\section{Introduction}

Let $(M^3, g)$ be a complete, smooth, asymptotically flat $3$-manifold with nonnegative scalar curvature and ADM mass $m(g)$ whose outermost minimal boundary has total surface area $A$. Then the (Riemannian) Penrose inequality states that $m(g) \geq \sqrt{\frac{A}{16 \pi }} $, with equality if and only if $(M^3, g)$ is isometric to the Schwarzschild $3$-manifold of mass $m(g)$. This theorem was proved by Huisken-Ilmanen \cite{HI'01} in the case that the boundary is connected, and by Bray \cite{Bray'01} in the general case. See also \cite{BrayLee'09,KWY17,AMMO22}, etc., for additional extensions and generalizations. As a corollary, the Penrose inequality implies the positive mass theorem that $m(g) \geq 0$, which was first proved by Schoen-Yau \cite{SY79}. See also other generalizations \cite{Witten81, SY'81}, etc.

A natural question to ask regarding the Penrose inequality is whether almost equality would imply that the manifold is close to the Schwarzschild manifold in some topology. This is called the stability problem for the Penrose inequality. Recently, there has been growing interest in studying similar stability problems. Notably, the author and Antoine Song proved a stability result for the positive mass theorem (see \cite{DS'23} and the references therein).

In this paper, we prove that, up to boundary area perturbations, the Schwarzschild manifold is stable for the Penrose inequality in the pointed measured Gromov-Hausdorff topology, modulo negligible domains (cf. Definition \ref{pm-GH mod}). More precisely, we have the following:

\begin{thm}\label{Penrose-stability}
	Let $A_0 \geq 0$ be a fixed constant and $(M_i^{3}, g_i)$ be a sequence of complete one-ended asymptotically flat $3$-manifolds, each of which has nonnegative scalar curvature, a compact outermost minimal boundary (possibly with multiple components) with total area $A_i \to A_0$. Suppose that the ADM mass $m(g_i)\to \sqrt{\frac{A_0}{16 \pi }} $. Then for all $i$, there is a connected closed subset $N_i \subset M_i$ containing the end, such that its boundary $\partial N_i = \Sigma _i^{\mathrm{a}} \cup \Sigma _i^{\mathrm{s}}$, where the major part $\Sigma _i^{\mathrm{a}}$ is connected and satisfies that 
	$$\Area_{g_i}(\Sigma _i^{\mathrm{a}}) \to A_0,$$
	and the minor part $\Sigma _i^{\mathrm{s}}$ satisfies
	$$\mathrm{Area}_{g_i}(\Sigma _i^{\mathrm{s}}) \to 0.$$
	Moreover, for any $p_i \in \Sigma _i^{\mathrm{a}}$, we have
	$$
	(N _i , \hat{d}_{g_i, N_i}, p_i ) \to (M_{\mathrm{Sch}}^{3}, g_{\mathrm{Sch}}, x_o)
	$$ 
	in the pointed measured Gromov-Hausdorff topology, and
	\begin{align*}
	(\Sigma _i^{\mathrm{a}}, \hat{d}_{g_i, \Sigma _i^{\mathrm{a}}} ) \to (\partial M_{\mathrm{Sch}}^3, \hat{d}_{g_{\mathrm{Sch}}, \partial M^3_{\mathrm{Sch}}})	
	\end{align*}
	in the measured Gromov-Hausdorff topology. Here, we denote by $(M_{\mathrm{Sch}}^{3}, g_{\mathrm{Sch}}, x_o)$ the standard Schwarzschild $3$-manifold with boundary area $\mathrm{Area}(\partial M^{3}_{\mathrm{Sch}}) = A_0$ and mass $m(g_{\mathrm{Sch}}) = \sqrt{ \frac{A_0}{16 \pi }} $. $x_o \in \partial M_{\mathrm{Sch}}^3$ is a base point, and $\hat{d}_{g_i, \cdot }$ are the length metrics on the corresponding spaces induced by $g_i$ (see Figure \ref{fig1} for a simplified geometric picture).
\end{thm}

We also have a similar stability result for the mass-capacity inequality. Recall that for an asymptotically flat $3$-manifold $(M^3,g)$ with outermost minimal boundary $\Sigma$ and an end $\infty_1$, the capacity of $\Sigma$ in $(M^3, g)$ is defined by $$
\mathcal{C}(\Sigma, g) := \inf \left\{ \frac{1}{\pi } \int_{M}|\nabla \varphi |^2 \mathrm{dvol}_g: \varphi \in C^{\infty}(M), \varphi =\frac{1}{2} \text{ on } \Sigma, \lim_{x\to \infty_1} \varphi (x) =1	\right\}. 
$$ 
Then, as a corollary of the positive mass theorem, it was shown by \cite[Theorem 9]{Bray'01} that $m(g)\geq  \mathcal{C}(\Sigma, g)$, and equality holds if and only if $(M^3, g)$ is the Schwarzschild $3$-manifold. For more details, please refer to Section \ref{capacity}.

\begin{thm}\label{mass-cap-stability}
	Let $m_0>0$ be a fixed constant and $(M_i^3, g_i)$ be a sequence of complete one-ended asymptotically flat $3$-manifolds, each of which has nonnegative scalar curvature and a compact connected outermost minimal boundary. Suppose that both $m(g_i) \to m_0$ and $  \mathcal{C}(\partial M_i, g_i) \to m_0$. Then for all $i$, there is a connected closed subset $E_i \subset M_i$ containing the end, such that its boundary $\partial E_i = \Sigma _i^{\mathrm{b}} \cup \Sigma _i^{\mathrm{s}}$, where the major part $\Sigma _i^{\mathrm{b}}$ is connected and satisfies that
	$$ \sup_{x \in \Sigma _i^{\mathrm{b}} } d_{g_i}(x, \partial M_i) \to 0, $$
	and the minor part $\Sigma _i^{\mathrm{s}}$ satisfies
	$$\Area_{g_i}(\Sigma^{\mathrm{s}}_i) \to 0.$$
	Moreover, for any $p_i \in \Sigma _i^{\mathrm{b}}$, we have
	$$
	(E_i, \hat{d}_{g_i, E_i}, p_i ) \to (M_{\mathrm{Sch}}^{3}, g_{\mathrm{Sch}}, x_o)
	$$ 
	in the pointed measured Gromov-Hausdorff topology, and 
	\begin{align*}
		(\Sigma _i^{\mathrm{b}}, \hat{d}_{g_i, \Sigma _i^{\mathrm{b}}}) \to (\partial M_{\mathrm{Sch}}^3, \hat{d}_{g_{\mathrm{Sch}}, \partial M_{\mathrm{Sch}}^3})	
	\end{align*}
	in the measured Gromov-Hausdorff topology. Here, we denote by $(M_{\mathrm{Sch}}^{3}, g_{\mathrm{Sch}}, x_o)$ the standard Schwarzschild $3$-manifold with boundary area $\mathrm{Area}(\partial M^{3}_{\mathrm{Sch}}) = 16 \pi m_0^2$ and mass $m(g_{\mathrm{Sch}}) = m_0 $. $x_o \in \partial M_{\mathrm{Sch}}^3$ is a base point, and $\hat{d}_{g_i, \cdot}$ are the length metrics on the corresponding spaces induced by $g_i$. 
\end{thm}

\begin{figure}[htpb]
	\centering
	\includegraphics[width=0.8\textwidth]{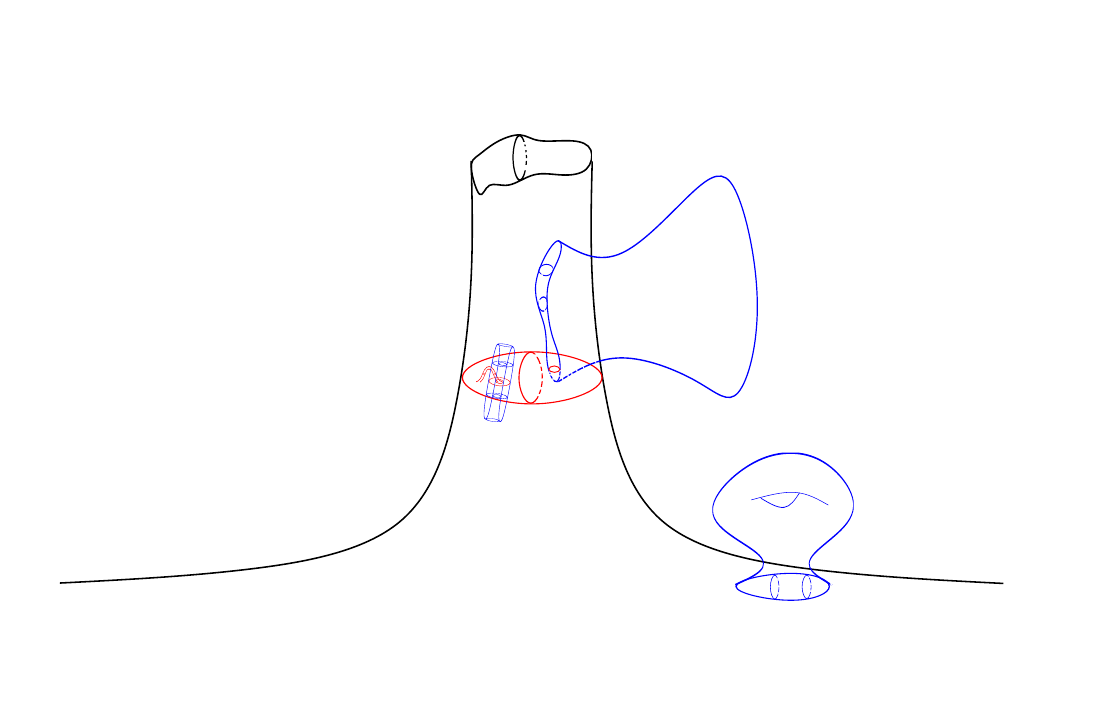}
	\caption{This simplified picture visually illustrates the conclusions of Theorem \ref{Penrose-stability}. The original manifold $M$ is represented in black and blue. $N$ refers to the region below the red part, excluding the blue part. $\Sigma ^{\mathrm{a}}$ corresponds to the red part, resembling an almost standard sphere with some minor handles attached and a few small disks or annuli removed, while $\Sigma ^{\mathrm{s}}$ is the boundary of the blue part, which has negligible area. In Theorem \ref{mass-cap-stability}, the long neck between the boundary of the manifold and the red part in this picture will not occur.} 
	\label{fig1}
\end{figure}

Our stability results can be viewed as generalizations of the main theorem in \cite{DS'23}. In particular, the case when $A_0=0$ in Theorem \ref{Penrose-stability} was proved in \cite{DS'23}. Our proof of the main theorems will build upon the tools developed in \cite{DS'23}, as well as some new techniques.

We give a remark on the boundary area perturbations in Theorem \ref{Penrose-stability}. Notice that in Theorem \ref{mass-cap-stability}, we can ensure that $\Sigma _i^{\mathrm{b}}$, the major part of the boundary of $E_i$, lies in a small neighborhood of the original boundary $\partial M_i$, but in Theorem \ref{Penrose-stability}, this is not necessarily true in general. In \cite[Example 5.2]{LS12}, Lee-Sormani constructed manifolds with almost equality in the Penrose inequality that are close to Schwarzschild spaces with a cylinder of arbitrary length appended to the boundary. See also \cite{ManScho15}. In other words, we need to cut out the long necks in these examples such that the remaining part has almost the same boundary area and is stable for the Penrose inequality. Such phenomena are consistent with our stability results. 

It would also be an interesting question to prove similar results for other topologies. See, for example, \cite{LS12, LS14,LNN20, KKL21, Dong22}, etc., and the references therein for related results.

\subsection*{Outline of the proof}
We will mainly focus on the proof of Theorem \ref{mass-cap-stability}. Up to a boundary area perturbation, Theorem \ref{Penrose-stability} will follow as a corollary of Theorem \ref{mass-cap-stability} by using an argument from \cite{Bray'01}.

Assume the conditions stated in Theorem \ref{mass-cap-stability}, i.e., $(M^3, g)$ is a complete asymptotically flat $3$-manifold with nonnegative scalar curvature and a connected outermost minimal boundary $\Sigma$ such that $m(g)>0$ and $m(g)-  \mathcal{C}(\Sigma, g) \ll 1$. By employing a doubling technique and allowing for a perturbation as in \cite{Bray'01}, we can assume that $\bar{M} = M \cup _\Sigma M$ is a smooth asymptotically flat $3$-manifold with nonnegative scalar curvature and two ends $\infty_1, \infty_2$.
Then, the infimum in the definition of capacity is achieved by the Green's function $f$ defined on $\bar{M}$, which satisfies
\begin{align*}
	\Delta _g f = 0,\\
	\lim_{x\to \infty_1} f(x) =1,\\ 
	\lim_{x\to \infty_2} f(x) =0.   
\end{align*}
From symmetry of $\bar{M}$, $f$ 
equals $\frac{1}{2}$ on $\Sigma$. We now consider the conformal metric $h= f^{4} g$ on $\bar{M}$. Then, $\infty_2$ can be compactified such that $\bar{M}^* := \bar{M} \cup \{\infty_2\} $ is a smooth manifold, and $(\bar{M}^*, h)$ is an asymptotically flat $3$-manifold with nonnegative scalar curvature. It can also be shown that the ADM mass $m(h) = m(g) -  \mathcal{C}(\Sigma,g)$, as in \cite{Bray'01}.

Based on our assumptions, $m(h) \ll 1$, the case discussed in \cite{DS'23}. So, modulo negligible domains, $(\bar{M}^*, h)$ is close to the Euclidean $3$-space $\mathbb{R}^3$ in the pointed measured Gromov-Hausdorff topology. For the original metric $g = f^{-4} h$, we will show that $f$ is uniformly close to the conformal factor in the Schwarzschild metric in the following two steps.

We first prove a new integral inequality involving the scalar curvature and the Hessian of the Green's function (cf. Proposition \ref{u-good}). Let $u:= \frac{2}{\mathcal{C}(\Sigma ,g)}\cdot \frac{1-f}{f}$, then
\begin{align*}
	\begin{split}
		1 - \frac{\mathcal{C}(\Sigma ,g)}{m(g)}
		\geq
		       \frac{\mathcal{C}(\Sigma ,g)}{96 \pi }\int_{(M, h)} \frac{\left|\nabla ^2 u + |\nabla u|_h^{\frac{3}{2}} ( h - 3 \nu \otimes \nu )\right|_h^2}{|\nabla u|_h } + R_h |\nabla u|_h  \mathrm{dvol}_h,
	\end{split}
\end{align*}
where $\nu = \frac{\nabla u}{|\nabla u|_h}$ and the integration is taken over regular set of $u$. 
This formula is very similar to the mass inequality proved by \cite[Theorem 1.2]{BKKS'22}.
One can follow the proof of \cite[Theorem 1.2]{BKKS'22} and employ the technique of 
integration over level sets of $u$ to control the integral in the inequality above. 
This method was initially explored by \cite{Stern'22}, 
and there have been many other applications and generalizations in recent studies, including \cite{BHKKZ23, AMO21, AMMO22}, and others. 
In the proof, we will first use this method to obtain a preliminary integral inequality in Proposition \ref{f-prop}. Then, using a corollary of that inequality, we can finally derive the desired integral inequality as stated above. As a corollary, we also provide another proof of the mass-area-capacity inequality (cf. Theorem \ref{mass-area-capacity}), which was first proved in \cite{BM08}.

Then, by using the integral inequality, together with the techniques used in \cite{DS'23}, we are able to find a region $\mathcal{E} \subset \bar{M}^*$ with a small-area boundary. In this region $\mathcal{E}$, the behavior of $|\nabla f|$ closely resembles the conformal factor in the Schwarzschild metric, and particularly, $|\nabla f|$ is uniformly bounded. By the Arzel\`{a}-Ascoli theorem, along the convergence of $(\mathcal{E}, h)$, we can take the limit of such $f$ and obtain a limit function $f_\infty$ defined on $\mathbb{R}^3$. However, it is not immediately clear whether $f_\infty$ is precisely the conformal factor in the Schwarzschild metric. 
To address this, we note that a favorable property of the pointed measured Gromov-Hausdorff convergence modulo negligible domains is that $f$ also converges to $f_\infty$ in the $W^{1,2}$-sense (cf. Lemma \ref{weak-W12}). Therefore, the elliptic equations satisfied by $f$ are preserved in this convergence, and $f_\infty$ also satisfies an elliptic equation on $\mathbb{R}^3$. The fact that the area of the boundary $\partial \mathcal{E}$ converges to $0$ has been essential here. The elliptic equation satisfied by $f_\infty$, together with the control on the gradient, would imply the rigidity of $f_\infty$ (cf. Section \ref{conv-equations}). 

Finally, using metric geometry tools and similar arguments from \cite[Section 4]{DS'23}, we can apply these properties of the functions $f$ and metrics $h$ to prove the main theorems.

\subsection*{Acknowledgements}
I would like to thank Marcus Khuri, Christos Mantoulidis, Daniel Stern and Antoine Song for helpful discussions and comments. I thank Hubert Bray and Andre Neves for their interests. I also thank the referee for their helpful comments and suggestions that improved this paper.

This paper was partially supported by Simons Foundation International, LTD.

\section{Preliminaries}

\subsection{Notations}
We will use $C, C'$ to denote a universal positive constant (which may be different from line to line); $\Psi (t), \Psi (t|a,b,\ldots)$ denote small constants depending on $a, b, \ldots$ and satisfying 
$$
\lim_{t\to 0} \Psi (t)=0,\ \lim_{t\to 0} \Psi (t|a,b\ldots) =0,
$$ 
for each fixed $a,b,\ldots$

We denote the Euclidean metric by $g_{\mathrm{Eucl}}$ or $\delta $, and the induced geometric quantities with subindex $\mathrm{Eucl}$ or $\delta $.

For a general Riemannian manifold $(M,g)$ and any $\pi \in M$, the geodesic ball with center $p$ and radius $r$ is denoted by $B_g(p,r)$ or $B(p,r)$ if the underlying metric is clear. Given a Riemannian metric, for a surface $\Sigma$ and a domain $\Omega $, $\Area(\Sigma)$ is the area of $\Sigma$, $\Vol(\Omega )$ is the volume of $\Omega $ with respect to the metric.

Finally we introduce some notations about length metric that will be used later.
Given a subset $U$ in a Riemannian manifold $(M,g)$, let $(U, \hat{d}_{g,U})$ be the induced length metric on $U$ of the metric $g$, that is, for any $x_1, x_2 \in U$,
$$
\hat{d}_{g,U}(x_1, x_2):= \inf \{L_g(\gamma ): \gamma \text{ is a rectifiable curve connecting } x_1, x_2 \text{ and } \gamma \subset U\} ,
$$ 
where $L_g(\gamma ) = \int_{0}^{1}|\gamma '|_g$ is the length of $\gamma $ with respect to metric $g$. By convention, two
points in two different path connected components of $U$ are at infinite $\hat{d}_{g, U}$-distance. 

For any $D>0$ and $p \in U$, we use $\hat{B}_{g,U}(p,D)$ to denote the geodesic ball inside $(U, \hat{d}_{g,U})$, that is
$$
\hat{B}_{g, U}(p,D) := \{ x \in U: \hat{d}_{g,U}(p,x) \leq D\} .
$$

\subsection{Geometry of Schwarzschild metric}

For a positve number $m>0$, the Schwarzschild $3$-manifold $(M^3_{\mathrm{Sch}}, g_{\mathrm{Sch}})$ (with mass $m$) is given by the following warped product metric on $\mathbb{S}^2 \times [0, \infty)$:
\begin{align}\label{sc-r}
g_{\mathrm{Sch}}= d s^2 + u_m(s)^2 g_{\mathbb{S}^2}, \ \ s \in [0, \infty),
\end{align} 
where $g_{\mathbb{S}^2}$ is the spherical metric with $\mathrm{Area}(\mathbb{S}^2, g_{\mathbb{S}^2}) = 4 \pi$, and $u_m$ is a positive increasing function satisfying
\begin{align}\label{u_m}
	u_m(0) = 2m,\ u_m'(0) =0,\ u_m'(s) = \left( 1- \frac{2m}{u_m(s)} \right) ^{\frac{1}{2}},\ u_m''(s) = \frac{m}{u_m(s)^2}.
\end{align}
Then the scalar curvature of $g_{\mathrm{Sch}}$ is identically zero, and the boundary $\Sigma_{\mathrm{Sch}}:= \partial M^3_{\mathrm{Sch}}$ is the only minimal surface inside $M^3_{\mathrm{Sch}}$.

Under Cartesian coordinate, $M^3_{\mathrm{Sch}}$ is diffeomorphic to $\mathbb{R}^3 \setminus B(\frac{m}{2})$, where $B(\frac{m}{2})$ is the Euclidean ball with radius $\frac{m}{2}$ around the center, and we have 
\begin{align}\label{sc-x}
	g_{\mathrm{Sch}, ij}(x) = \left( 1+ \frac{m}{2 |x|} \right) ^{4} \delta_{ij} ,\ \forall |x| \geq \frac{m}{2}.
\end{align}
This metric can also be extended to give a Schwarzschild metric $g_{\mathrm{Sch}}$ defined on $\mathbb{R}^3 \setminus \{0\} $.

Define $\rho _m(x ) := \mathrm{dist}(x , \Sigma_{\mathrm{Sch}}) $. Then 
$$
\rho _m(x) = \int_{\frac{m}{2}}^{|x|}\left( 1+ \frac{m}{2t} \right) ^2 dt = |x| - \frac{m^2}{4|x|} + m \log \frac{2|x|}{m},
$$ 
which implies that
\begin{align}\label{rho-vs-x}
\rho _m(x) - m \log \frac{2\rho _m(x)}{m} \leq |x| \leq \rho _m(x).
\end{align}

Using these two representations (\ref{sc-r}) and (\ref{sc-x}) of $g_{\mathrm{Sch}}$ to compute the area of geodesic spheres, we have 
$$
u_m(\rho _m(x) ) ^2 \cdot 4\pi = \left( 1+ \frac{m}{2|x|} \right) ^{4} \cdot 4\pi |x|^2,
$$ 
i.e.
\begin{align}\label{u-vs-x}
u_m(\rho _m(x) ) = \left( 1+ \frac{m}{2|x|} \right) ^2\cdot |x|.
\end{align} 
In particular, together with (\ref{rho-vs-x}),
\begin{align}
\lim_{r\to \infty} \frac{u_m(r)}{r} =1.
\end{align}

Now we introduce another harmonic function $f_m$ and rewrite above identities using $f_m$ instead of $|x|$. Define
\begin{align*}
	f_m(x) := \left( 1+ \frac{m}{2|x|} \right) ^{-1}.
\end{align*}
Standard computations imply that 
$$
\Delta _{g_{\mathrm{Sch}}} f_m =0,\ \ f_m = \frac{1}{2} \text{ on } \Sigma_{\mathrm{Sch}},\ \ \lim_{|x|\to \infty}f_m(x)= 1. 
$$ 
Then
\begin{align*}
	|x| = \frac{m}{2}\cdot \frac{f_m(x)}{1- f_m(x)},
\end{align*}
and
\begin{align}\label{rho-f}
	\rho _m(x) = \rho _m(f_m(x) ) = \frac{m}{2} \left( \frac{1}{1-f_m(x)} - \frac{1}{f_m(x)} \right) + m \log \frac{f_m(x)}{1- f_m(x)}.
\end{align}
Moreover,
\begin{align}\label{u-f}
	u_m(\rho _m(x) ) = \frac{m}{2}\cdot \frac{1}{f_m(x) (1- f_m(x) )}.
\end{align}

\subsection{Asymptotically flat $3$-manifolds}
A smooth orientable connected complete Riemannian $3$-manifold $(M^3, g)$ is called asymptotically flat if there exists a compact subset $K \subset M$ such that $M\setminus K= \sqcup_{k=1}^{N} M_{\mathrm{end}}^k$ consists of finite pairwise disjoint ends, and for each $1\leq k \leq N$, there exist $C>0, \sigma > \frac{1}{2}$, and a $C^\infty$-diffeomorphism $\Phi_k : M_{\mathrm{end}}^k \to  \mathbb{R}^3 \setminus B(1)$ such that under this identification, 
	$$
	| \partial ^{l}(g_{ij}- \delta _{ij})(x)| \leq C|x|^{-\sigma - |l|},
	$$ for all multi-indices $|l|=0,1,2$ and any $x \in \mathbb{R}^3\setminus B(1)$.
	Furthermore, we always assume the scalar curvature $R_g$ is integrable over $(M^3,g)$. The ADM mass from general relativity of each end $M_{\mathrm{end}}^{k}$, $1\leq k \leq N$, is then well-defined (see \cite{ADM'61, Bartnik'86}) and given by
	$$m_k(g):= \lim_{r\to \infty} \frac{1}{16\pi} \int_{S_r} \sum_{i,j} (g_{ij,i}-g_{ii,j}) \nu ^j dA$$
where $\nu $ is the unit outer normal to the coordinate sphere $S_r$ of radius $|x|=r$ in the given end, and $dA$ is its area element. 

\begin{defn}
	A surface $\Sigma \subset (M^3, g)$ is called a horizon if it is a minimal surface. It is called an outermost horizon if it is a horizon and it is not enclosed by another minimal surface in $(M^3, g)$.
\end{defn}

Let $(M^3, g)$ be an asymptotically flat  $3$-manifold. By Lemma 4.1 in \cite{HI'01}, we know that inside $M^3$, there is a trapped compact region $T$ whose topological boundary consists of smooth embedded minimal $2$-spheres. An ``exterior region'' $M^3_{ext}$ is defined as the metric completion of any connected component of $M \setminus T$ containing one end. Then $M^3_{ext}$ is connected, asymptotically flat, has a compact minimal boundary $\partial M^3_{ext}$ ($\partial M^3_{ext}$ may be empty), and contains no other compact minimal surfaces, that is, $M^3_{ext}$ is an asymptotically flat $3$-manifold with outermost horizon boundary.

We will be able to perturb an asymptotically flat metric to a metric with nicer behavior at infinity in each end because of the following definition and proposition.

\begin{defn}
	We say that $(M^3, g)$ is harmonically flat at infinity if $(M^3 \setminus K, g)$ is isometric to a finite disjoint union of regions with zero scalar curvature which are conformal to $(\mathbb{R}^3 \setminus B, \delta)$ for some compact set $K$ in $M^3$ and some ball $B$ in $\mathbb{R}^3$ centered around the origin.
\end{defn}
By definition, if $(M^3, g)$ is harmonically flat, then on each end, $g_{ij}(x)= V(x) \delta _{ij}$ for some bounded positive $\delta $-harmonic function $V(x)$, which satisfies that $\Delta _{\delta } V(x) =0$ and (c.f. \cite[Equation (10)]{Bray'01})
\begin{align}\label{V-expan}
	V(x) = a + \frac{b}{|x|} + O\left(\frac{1}{|x|^2}\right).
\end{align} 
In this case, its ADM mass on this end is given by $2ab$.

\begin{prop}[{\cite{SY'81}}]\label{hf-apprx}
	Let $(M^3,g)$ be a complete, asymptotically flat $3$-manifold with $R_g \geq 0$ and ADM mass $m_k(g)$ in the $k$-th end. For any $\epsilon >0$, there exists a metric $\hat{g}$ such that $e^{-\epsilon }g \leq \hat{g} \leq e^{\epsilon }g$, $R_{\hat{g}} \geq 0$, $(M^3, \hat{g})$ is harmonically flat at infinity, and $|m_k(\hat{g}) - m_k(g)| \leq \epsilon $, where $m_k(\hat{g})$ is the ADM mass of $\hat{g}$ in the $k$-th end.
\end{prop}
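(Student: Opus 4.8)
The plan is to produce $\hat g$ by a two-step conformal modification of $g$, carried out end by end, and then to run a conformal-Laplacian solvability argument to restore nonnegative scalar curvature. First I would fix a large coordinate sphere $S_R$ in each end and, using a cutoff function $\chi$ supported outside $B(R)$ and equal to $1$ outside $B(2R)$, replace $g$ on the transition annulus by the \emph{interpolated} metric $g_1 := \chi\,\delta + (1-\chi)\,g$ in coordinates; for $R$ large this is $e^{\epsilon/2}$-close to $g$ in $C^2$ because $|g_{ij}-\delta_{ij}| = O(|x|^{-\sigma})$ with $\sigma>\tfrac12$, and outside $B(2R)$ the new metric is exactly Euclidean (in particular harmonically flat there, trivially). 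The price is that $g_1$ now has some scalar curvature $R_{g_1}$ supported in the annulus, which is small in $L^\infty$ and compactly supported, but not necessarily nonnegative.

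The second step is to conformally correct: look for $\hat g = w^4 g_1$ with $w>0$, $w\to a_k$ at the $k$-th end, so that
\begin{align*}
	R_{\hat g} = w^{-5}\bigl(-8\Delta_{g_1} w + R_{g_1} w\bigr).
\end{align*}
Thus I need $-8\Delta_{g_1}w + R_{g_1}w \ge 0$, and ideally $=0$ outside a compact set so that $\hat g$ stays harmonically flat there; since $R_{g_1}$ is already supported in the annulus, solving $-8\Delta_{g_1}w + R_{g_1}w = 0$ on all of $M$ with $w\to 1$ at infinity does the job at once (the solution is then $\delta$-harmonic outside $B(2R)$, hence of the form \eqref{V-expan}). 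Existence of such a $w$ with $0<w$ and $w-1 = O(|x|^{-1})$ follows from the standard Fredholm theory for the conformal Laplacian on asymptotically flat manifolds (Bartnik weighted Sobolev spaces), \emph{provided} the operator $-8\Delta_{g_1}+R_{g_1}$ has no kernel in the relevant weighted space; this is where one uses that $R_{g_1}$ can be made small: a smallness bound $\|R_{g_1}\|_{L^{3/2}} \ll 1$ forces the operator to be a small perturbation of $-8\Delta_{g_1}$, which is injective, and also gives, via the maximum principle together with the weak Harnack inequality, that $w$ is uniformly close to $1$ and in particular positive. Choosing $R$ large enough at the outset makes $\|R_{g_1}\|_{L^{3/2}}$ as small as we like, so we get $|w-1|\le \Psi(\epsilon)$ and hence $e^{-\epsilon}g \le \hat g \le e^{\epsilon}g$ after also absorbing the first-step closeness.

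It remains to track the ADM mass. On the $k$-th end $g_1$ is Euclidean and $\hat g_{ij} = V_k(x)\delta_{ij}$ with $V_k = w^4$ a positive bounded $\delta$-harmonic function, so $V_k(x) = a_k + b_k|x|^{-1}+O(|x|^{-2})$ and $m_k(\hat g) = 2a_kb_k$ by the formula recalled after \eqref{V-expan}. Since $w\to 1$ we get $a_k\to 1$; the coefficient $b_k$ is, up to the normalization $a_k$, essentially $\int R_{g_1}$ plus lower-order terms, which by the same reasoning as in the first variation of mass under conformal change differs from $m_k(g)$ by at most $C(\|R_{g_1}\|_{L^{3/2}} + \text{(transition error)}) \le \epsilon$; a clean way to see this is to note $m_k(\hat g) - m_k(g_1) = -\tfrac{1}{16\pi}\int \cdots$ is controlled by the correction, and $m_k(g_1)-m_k(g)$ is controlled by how much the annulus interpolation moved the metric, both $\le \epsilon/2$ for $R$ large. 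The main obstacle is the second step: one has to make the Fredholm/maximum-principle argument for the conformal Laplacian on a noncompact asymptotically flat manifold genuinely rigorous with the correct decay, and in particular ensure positivity of $w$ uniformly — this is exactly the technical heart of \cite{SY'81}, and I would cite it rather than reprove it, but if a self-contained argument were wanted, the smallness of $R_{g_1}$ (not available for general $g$, only after the first step) is the device that makes it elementary.
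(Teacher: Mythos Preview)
The paper does not provide its own proof of this proposition: it is stated with attribution to \cite{SY'81} and used as a black box. Your sketch is essentially the classical Schoen--Yau argument (cutoff to Euclidean at infinity, then conformally correct via the conformal Laplacian with small potential), so there is nothing to compare against in the paper itself; your outline is correct and matches the cited source.
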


\subsection{pm-GH convergence modulo negligible domains}
In this subsection, we recall some definitions for the pointed measured Gromov-Hausdorff topology.

Assume $(X,d_X,x), (Y,d_Y, y)$ are two pointed metric spaces.
The pointed Gromov-Hausdorff (or pGH-) distance is defined in the following way. A pointed map $f:(X,d_X,x)\to (Y, d_Y,y)$ is called an $\varepsilon $-pointed Gromov-Hausdorff approximation (or $\varepsilon $-pGH approximation) if it satisfies the following conditions:
\begin{itemize}
	\item[(1)] $f(x)=y$;
	\item [(2)] $B(y, \frac{1}{\varepsilon }) \subset B_\varepsilon (f(B(x, \frac{1}{\varepsilon })) )$;
	\item[(3)] $|d_X(x_1, x_2) - d_Y(f(x_1), f(x_2) )|<\varepsilon $ for all $x_1, x_2 \in B(x, \frac{1}{\varepsilon })$.
\end{itemize}
The pGH-distance is defined by
\begin{align*}
	d_{pGH}( (X,d_X, x), & (Y,d_Y,y) ) :=\\
&\inf \{\varepsilon>0 : \exists\ \varepsilon\text{-pGH approximation } f:(X,d_X,x)\to (Y,d_Y,y)\} .
\end{align*} 

We say that a sequence of pointed metric spaces $(X_i, d_i, p_i)$ converges to a pointed metric space $(X, d, p)$ in the pointed Gromov-Hausdorff topology, if the following holds 
$$d_{pGH}( (X_i, d_i, p_i), (X, d, p) ) \to 0.$$

If $(X_i, d_i)$ are length metric spaces, i.e.  for any two points $x, y \in X_i$, $$
d_i(x,y)= \inf \{L_{d_i}(\gamma ): \gamma \text{ is a rectifiable curve connecting }x, y\}, 
$$ where $L_{d_i}(\gamma )$ is the length of $\gamma $ induced by the metric $d_i$, then equivalently, 
$$d_{pGH}( (X_i, d_i, p_i), (X,d,p) )\to 0$$ 
if and only if for all $D>0$, $$d_{pGH}( (B(p_i, D), d_i), (B(p, D), d) ) \to 0,$$ where $B(p_i, D)$ are the geodesic balls of metric $d_i$.

A pointed metric measure space is a structure $(X,d_X, \mu, x )$ where $(X,d_X)$ is a complete separable metric space, $\mu $ a Radon measure on $X$ and $x \in \mathrm{supp} (\mu)$. 

We say that a sequence of pointed metric measure length spaces $(X_i, d_i, \mu _i, p_i)$ converges to a pointed metric measure length space $(X, d, \mu , p)$ in the pointed measured Gromov-Hausdorff (or pm-GH) topology, if for any $\varepsilon >0, D>0$, there exists $N(\varepsilon ,D) \in \mathbb{Z}_+$ such that for all $i \geq N(\varepsilon , D)$, there exists a Borel $\varepsilon $-pGH approximation 
$$f_i^{D, \varepsilon }: (B(p_i, D), d_i, p_i) \to (B(p, D+\varepsilon ), d, p) $$
satisfying 
$$
(f_i^{D, \varepsilon })_{\sharp}(\mu _i|_{B(p_i, D)}) \text{ weakly converges to } \mu |_{B(p, D)}  \text{ as } i\to \infty, \text{ for }a.e. D>0.
$$ 

In the case when $X_i$ is an $n$-dimensional manifold, without extra explanations, we will always consider $(X_i, d_i, p_i)$ as a pointed metric measure space equipped with the $n$-dimensional Hausdorff measure $\mathcal{H}^n_{d_i}$ induced by $d_i$.

Finally, we introduce a notation about the topology used in this paper. For simplicity, we only consider manifolds, but one can easily generalize it to general metric measured spaces.
\begin{defn}\label{pm-GH mod}
For a sequence of pointed Riemannian $n$-manifolds $(M_i^n, g_i, p_i)$ and $(M^n, g, p)$, we say that $(M_i, g_i, p_i)$ converges to $(M, g, p)$ in the pointed measured Gromov-Hausdorff topology modulo negligible domains if there exist open subsets $Z_i \subset M_i$ such that $\mathcal{H}^{n-1}(\partial Z_i) \to 0$, $p_i \in M_i \setminus Z_i$ and 
$$
(M_i \setminus Z_i, \hat{d}_{g_i}, p_i) \to (M, d_g, p)
$$ 
in the pointed measured Gromov-Hausdorff topology for the induced length metric.
\end{defn}

We have the following theorem, which states that $C^0$-convergence of metric tensors modulo negligible domains implies Gromov-Hausdorff convergence of length metrics modulo negligible domains after peturbation.
\begin{thm}\label{GH-spikes}
	Assume that $n \geq 2$, and $(E_i^n, g_i)$ is a sequence of $n$-dimensional complete Riemannian manifolds with compact boundaries. If $(E_i, g_i)$ converges to $\mathbb{R}^n$ in the $C^0$-sense modulo negligible domains, that is, there exist embeddings $\mathbf{u}_i: E_i \to \mathbb{R}^n$ such that 
\begin{itemize}
	\item $\mathbf{u}_i(E_i)$ contains the end of $\mathbb{R}^n$,
	\item $\|(\mathbf{u}_{i}^{-1})^* g_i - g_{\mathrm{Eucl}}\|_{C^0} \to 0$,
	\item $\mathcal{H}^{n-1}(\partial E_i) \to 0$, 
\end{itemize}
then there exist closed subsets $E_i'' \subset E_i$ with compact boundaries, such that for any base points $p_i \in E_i''$, $(E''_i, g_i, p_i)$ converges to $(\mathbb{R}^n, g_{\mathrm{Eucl}}, 0)$ in the pointed measured Gromov-Hausdorff topology modulo negligible domains in the sense of Definition \ref{pm-GH mod}.
\end{thm}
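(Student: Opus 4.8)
The plan is to reduce the statement to a Gromov–Hausdorff closeness estimate for length metrics on the complements of the spikes, leveraging the hypothesis that the boundary areas $\mathcal{H}^{n-1}(\partial E_i)$ shrink to zero. First I would observe that the key difficulty is \emph{not} the interior — since $(\mathbf{u}_i^{-1})^* g_i$ is $C^0$-close to the Euclidean metric, the \emph{ambient} (non-intrinsic, i.e. restricted Euclidean) distances on $\mathbf{u}_i(E_i)$ are automatically close to Euclidean ones — but rather that the \emph{intrinsic} length metric $\hat d_{g_i}$ on $E_i$ could a priori be much larger than the ambient distance, because geodesics in $E_i$ are forced to go around the spike domains $Z_i$, which may be long and thin fingers. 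So the heart of the proof is to show that cutting out a further controlled set $E_i'' \subset E_i$ kills this discrepancy while only removing a negligible-area boundary piece.

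The main steps, in order, would be: (1) Using the embeddings $\mathbf{u}_i$, identify $E_i$ with a domain $\Omega_i := \mathbf{u}_i(E_i) \subset \mathbb{R}^n$ carrying a metric $\tilde g_i := (\mathbf{u}_i^{-1})^* g_i$ with $\|\tilde g_i - g_{\mathrm{Eucl}}\|_{C^0} \le \epsilon_i \to 0$; note $\Omega_i$ contains a Euclidean end, and $\mathbb{R}^n \setminus \Omega_i$ together with the ``spikes'' accounts for the complement. (2) Apply an isoperimetric-type argument: since $\mathcal{H}^{n-1}(\partial E_i) \to 0$ and $n \ge 2$, the ``bad'' part of $\Omega_i$ — the union of the bounded complementary components of $\Omega_i$ in $\mathbb{R}^n$ plus a neighborhood where intrinsic and ambient distances diverge — has small Euclidean diameter in an appropriate sense, or more precisely can be enclosed together with $\partial E_i$ inside a set whose removal leaves the rest with controlled intrinsic geometry. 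Concretely I would define $E_i''$ by removing from $E_i$ the preimage of a small metric neighborhood of the union of all bounded components of $\mathbb{R}^n \setminus \Omega_i$, chosen so that its boundary area is still $o(1)$ (possible by a coarea/Vitali selection of a good level set of the distance to that union, since the total area budget is $o(1)$). (3) On the resulting $E_i''$, show that for any two points at ambient distance $\le R$, a near-Euclidean straight-line path can be modified to avoid the spikes at the cost of a length increase bounded by $C\,(\mathcal{H}^{n-1}(\partial E_i))^{1/(n-1)} + C\epsilon_i = o(1)$, so $\hat d_{g_i, E_i''}$ and the Euclidean distance differ by $o(1)$ on bounded sets; this gives the pointed GH convergence of $(E_i'', \hat d_{g_i}, p_i) \to (\mathbb{R}^n, g_{\mathrm{Eucl}}, 0)$. (4) Upgrade to the \emph{measured} statement: since $\tilde g_i \to g_{\mathrm{Eucl}}$ in $C^0$, the $n$-dimensional Hausdorff measures $\mathcal{H}^n_{\hat d_{g_i}}$ are comparable to and asymptotically equal to Lebesgue measure on bounded sets, and the removed region has $\mathcal{H}^n$-measure $\to 0$ (its $\mathcal{H}^{n-1}$-boundary is $o(1)$ and it is bounded), so the pushforward measures converge weakly to $\mathcal{L}^n$ for a.e.\ radius $D$; feeding this through Definition \ref{pm-GH mod} with the original spikes $Z_i$ (now possibly enlarged by the removed region) as the negligible spikes gives exactly pm-GH convergence modulo negligible spikes.

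I expect the main obstacle to be step (3) — quantitatively controlling how much a path must lengthen to route around the spikes in terms of the \emph{total} $(n-1)$-dimensional area of $\partial E_i$ rather than its diameter. The clean way to handle this is a projection/retraction argument: a generic line segment in $\mathbb{R}^n$ meets a fixed spike in a set whose measure is bounded via an integral-geometric (Crofton-type) inequality by the $(n-1)$-area of the spike boundary, so by Fubini one can perturb the segment to an $o(1)$-nearby broken path missing all spikes, each ``detour'' around a spike costing at most a constant times the spike's boundary area raised to a power. Making this uniform over all the (possibly infinitely many, shrinking) spikes and over all pairs of endpoints in a fixed ball, and checking that the selected cutting level set for $E_i''$ simultaneously has small area \emph{and} leaves a set on which this routing works, is the delicate bookkeeping; the hypotheses $n \ge 2$ and $\mathcal{H}^{n-1}(\partial E_i) \to 0$ are exactly what make the integral-geometric estimate and the coarea selection go through.
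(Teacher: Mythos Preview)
Your overall plan is reasonable and you correctly flag step (3) as the crux, but the paper takes a genuinely different route. Rather than a direct Crofton-type rerouting argument, the paper argues by \emph{induction on the dimension $n$}: one tiles $\mathbb{R}^n$ into cubes $\mathbf{C}_{\mathbf{k}}(\delta_1)$ of side $\delta_1 \sim \eta^{10^{-4}n^{-1}}$ (where $\eta = \mathcal{H}^{n-1}(\Sigma)$), and in each cube uses the coarea formula to select a hyperplane slice $A_{\mathbf{k},\delta_1}(t_{\mathbf{k}})$ on which $\Sigma$ has $(n-2)$-area at most $C\eta^{1-(10n)^{-1}}$. The induction hypothesis, applied to this $(n-1)$-dimensional slice, produces a good subset $D'_{\mathbf{k}}$ with controlled intrinsic geometry, which is then extended to a subset $\mathbf{C}_{\mathbf{k}}(\delta_1)'$ of the full cube by keeping only the fibres of the orthogonal projection $\pi_{\mathbf{k}}$ that land in $D'_{\mathbf{k}}$ and miss $\pi_{\mathbf{k}}(\Sigma)$. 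Gluing over all cubes and thickening slightly gives $Y''$. The intrinsic $\hat d_{Y''}$-diameter of each $\mathbf{C}_{\mathbf{k}}(\delta_1)'$ is then bounded by $(n+2)\delta_1$ plus the inductive error, so the global length-metric comparison follows by concatenating paths through adjacent cubes; no integral-geometric routing estimate is needed at all. This also yields an explicit quantitative bound $d_{pGH} \le \eta^{2^{-n}}$, which your sketch does not obviously produce.

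As for your step (3) itself, I do not see how to close it as stated. The assertion that ``each detour around a spike costs at most a constant times the spike's boundary area raised to a power'' implicitly assumes something like convexity of the individual spikes; Crofton only tells you that a \emph{generic} line meets $\partial\mathcal W$ in few points, and the cost of rerouting around a given entry--exit pair on $\partial\mathcal W$ is governed by the local geometry of the obstacle, not by its total $(n-1)$-area. More seriously, your step (2) excision --- removing a Euclidean $\delta$-neighbourhood of the holes, with $\delta$ chosen by coarea --- does not remove the real bad set: points of $\Omega_i$ that are not Euclidean-close to any single hole but are nonetheless surrounded by an arrangement of holes so that every segment from them meets $\mathcal W$ in a way that forces a large detour. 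Passing from ``most lines reroute cheaply'' to ``every pair of points in $E_i''$ is connected cheaply'' is exactly where this bites. The paper's construction excises precisely these trapped points, cube by cube, as the complement of $\pi_{\mathbf{k}}^{-1}(D'_{\mathbf{k}}\setminus \pi_{\mathbf{k}}(\Sigma))$; that is what makes the bookkeeping tractable, and it is not clear your single coarea selection on $\mathrm{dist}(\cdot,\mathcal W)$ can achieve the same effect.
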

Notice that this theorem was proved for dimension $3$ in \cite{DS'23}.
Using the same techniques as in \cite{DS'23}, along with inductive arguments, we can extend this result to a general dimensional version. For the reader's convenience, we provide a detailed proof in Appendix \ref{appendix}.

\section{Capacity of horizon and Green's function}\label{capacity}
In this section, we introduce some properties of capacity and Green's function, which are known in the literature and will be used later in this paper. Most of this section follows from \cite{Bray'01}. 

Let's firstly introduce the capacity of a surface in the special case when it is the horizon of an asymptotically flat $3$-manifold which is harmonically flat at infinity. 
\begin{defn}
	Given a complete, asymptotically flat $3$-manifold $(M^3, g)$ with a connected outermost horizon boundary $\Sigma$, nonnegative scalar curvature and one asymptotically flat end $\infty_1$, the capacity of $\Sigma$ in $(M^3,g)$ is defined by
	\begin{align*}
		\mathcal{C}(\Sigma, g) := \inf \left\{ \frac{1}{ \pi } \int_{M^{3}}|\nabla \varphi |^2 \mathrm{dvol}_g: \varphi \in C^{\infty}(M),\ \varphi = \frac{1}{2} \text{ on } \Sigma,\ \lim_{x \to \infty_1}\varphi (x) =1 \right\} .
	\end{align*}
\end{defn}
From standard theory (c.f. \cite{Bartnik'86}), the infimum in the definition of $\mathcal{C}(\Sigma, g)$ is achieved by the Green’s function $\varphi \in C^{\infty}(M^3) $ which satisfies
\begin{align}\label{Green-1end}
	\begin{split}
		\Delta_g \varphi &=0,\\
		\varphi &= \frac{1}{2} \text{ on } \Sigma,\\
		\lim_{x \to \infty_1} \varphi (x) &=1.
	\end{split}
\end{align}
By maximum principle, $\varphi(x) \in [\frac{1}{2}, 1)$ for any $x \in M^3$.
Define the level sets of $\varphi $ to be
$$
\Sigma^{\varphi }_t := \{x \in M^3: \varphi (x) = t\} .
$$ 
Then by Sard's theorem, $\Sigma^{\varphi }_t$ is a smooth surface for almost all $t \in (\frac{1}{2}, 1)$. By the co-area formula, 
\begin{align*}
	\mathcal{C}(\Sigma, g) &= \frac{1}{ \pi } \int_{\frac{1}{2}}^{1} \int_{\Sigma^{\varphi }_t} |\nabla \varphi |.
\end{align*}
For any regular value $t \in (\frac{1}{2}, 1)$, integrating $\Delta \varphi =0$ over $\{\frac{1}{2} \leq \varphi \leq t\} $, and using Stokes' theorem, we have 
\begin{align}\label{h(ooo)}
	\int_{\Sigma} |\nabla \varphi | = \int_{\Sigma^{\varphi }_t} |\nabla \varphi |.
\end{align}
So
\begin{align}\label{cap-phi}
	\mathcal{C}(\Sigma, g) = \frac{1}{2 \pi } \int_{\Sigma}|\nabla \varphi |.
\end{align}
When $(M^3, g)$ is harmonically flat at infinity, we have the following expansion of the Green's function (c.f. \cite{Bartnik'86}, \cite[Equation (80)]{Bray'01}):
\begin{align}\label{phi-expan}
	\varphi (x) = 1 - \frac{\mathcal{C}(\Sigma,g)}{2|x|} + O\left(\frac{1}{|x|^2}\right) \text{ as } x\to \infty_1.
\end{align}

Now we introduce another definition which is closely related to the capacity of a horizon surface. 
\begin{defn}
	Given a complete, asymptotically flat $3$-manifold $(\bar{M}^3, \bar{g})$ with multiple asymptotically flat ends and one chosen end $\infty_1$, define
	\begin{align*}
		\mathcal{C}(\bar{g}) := \inf \left\{ \frac{1}{2 \pi }\int_{\bar{M}^3}|\nabla \phi |^2 \mathrm{dvol}_{\bar{g}}:  \phi \in \mathrm{Lip}(\bar{M}),\ \lim_{x\to \infty_1} \phi (x) =1,\ \lim_{x\to \{\infty_k\} _{k \geq 2}}\phi (x) =0 \right\} .
	\end{align*}
\end{defn}
Similarly the infimum in the definition of $\mathcal{C}(\bar{g})$ is achieved by the Green’s function $\phi $ which satisfies
\begin{align}\label{Green-2end}
	\begin{split}
		\Delta_{\bar{g}} \phi & =0,\\
		\lim_{x\to \infty_1} \phi (x) &= 1,\\
		\lim_{x\to \infty_k} \phi (x) &=0 \text{ for all } k \geq 2.
	\end{split}
\end{align}

For a complete asymptotically flat $3$-manifold $(M^3, g)$ with a compact outermost horizon boundary $\Sigma$, nonnegative scalar curvature and one end $\infty_1$, we can take another copy of $(M^3, g)$ and glue them together along the boundary $\Sigma$ to get a new metric space $(\bar{M}, \bar{g})$. In general, $(\bar{M}, \bar{g})$ is only a Lipschitz manifold with two asymptotically flat ends $\{\infty_1, \infty_2\} $. From the proof of \cite[Theorem 9]{Bray'01}, for any $\delta >0$ small enough, we can smooth out $(\bar{M}, \bar{g})$ and construct a smooth complete $3$-manifold $(\tilde{M}_{\delta }, \tilde{g}_\delta )$ with nonnegative scalar curvature and two asymptotically flat ends which, in the limit as $\delta \to 0$, approaches $(\bar{M}, \bar{g})$ uniformly. For reader's convenience, we recall the details of \cite{Bray'01} in the following.

Let $(M_1^3, g), (M_2^3,g)$ be the two copies of $(M^3, g)$. A first step is to construct a smooth manifold (c.f. \cite[Equation (92)]{Bray'01})
\begin{align*}
	(\tilde{M}_\delta, \bar{g}_\delta) : = (M_1^{3}, g) \sqcup (\Sigma \times (0, 2 \delta ), G) \sqcup (M_2^3, g),
\end{align*}
where $\Sigma \times \{0\} $ and $\Sigma \times \{2\delta \} $ are identified with $\Sigma \subset (M^3, g)$, $G$ is a warped product metric and symmetric about $t = \delta $, and $\Sigma \times \{\delta \} \subset (\Sigma \times (0, 2\delta ), G)$ is totally geodesic. In general, the scalar curvature of $G$ only satisfies $R_G \geq R_0$ for some constant $R_0 \leq 0$ independent of $\delta $, and may not be nonnegative. 

Then a second step is to take a conformal deformation of $\bar{g}_\delta $ to get a new metric with nonnegative scalar curvature. Define a smooth function $\mathcal{R}_\delta $, which equals $R_0$ in $\Sigma \times [0, 2\delta ]$, equals $0$ for $x$ more than a distance $\delta $ from $\Sigma \times [0, 2\delta ]$, takes values in $[R_0, 0]$ everywhere and symmetric about $\Sigma \times \{\delta \} $. In particular, $R_{\bar{g}_\delta }(x) \geq \mathcal{R}_\delta (x)$ for any $x \in \tilde{M}_\delta $. Define $u_\delta (x)$ such that (c.f. \cite[Equation (101)]{Bray'01})
\begin{align}\label{u_delta}
	\begin{split}
	(-8 \Delta _{\bar{g}_\delta } + \mathcal{R}_\delta (x) ) u_\delta (x) &=0, \\
	\lim_{x\to \{\infty_1, \infty_2\} } u_\delta (x) &=1.
\end{split}
\end{align}
Then $u_\delta $ is a smooth function and satisfies that (c.f. \cite[Equation (102)]{Bray'01})
\begin{align*}
	1 \leq u_\delta (x) \leq 1 + \epsilon (\delta )
\end{align*}
where $\epsilon $ goes to $0$ as $\delta \to 0$. Define 
\begin{align*}
	\tilde{g}_\delta := u_\delta ^{4} \cdot \bar{g}_\delta.
\end{align*}
The scalar curvature of $\tilde{g}_\delta $ satisfies 
\begin{align*}
	R_{\tilde{g}_\delta }&= u_\delta ^{-5} \left( R_{\bar{g}_\delta } u_\delta  - 8 \Delta _{\bar{g}_\delta } u_\delta  \right) \\
	& = u_\delta ^{-4} \left( R_{\bar{g}_\delta } - \mathcal{R}_\delta  \right) \\
	& \geq 0.
\end{align*}
By definition, $\lim_{\delta \to 0} m(\tilde{g}_\delta ) = m(\bar{g})$ and $\lim_{\delta \to 0} \mathcal{C}(\tilde{g}_\delta) = \mathcal{C}(\bar{g})$. 

To see the relation between $\mathcal{C}(\bar{g})$ and $\mathcal{C}(\Sigma, g)$, we define the reflection map 
$$
\Phi : M_1^3 \cup _{\Sigma} M_2^3 \to M_1^3 \cup _{\Sigma} M_2^3
$$ 
such that for any $x \in M_1^3$, $\Phi (x) \in M_2^3$ is the same point under the identification $M_1^3 = M_2^3 = M^3$, $\Phi ^2 = \mathrm{Id}$ and $\Phi |_{\Sigma} = \mathrm{Id}$. If $\phi $ satisfies (\ref{Green-2end}), then $1- \phi \circ \Phi $ also satisfies (\ref{Green-2end}) and by the uniqueness we have $\phi (x) = 1- \phi \circ \Phi (x)$, which implies that $\phi |_{\Sigma} = \frac{1}{2}$. So $\phi |_{M_{1}^3} $ also satisfies (\ref{Green-1end}), which implies that 
\begin{align}
\mathcal{C}(\bar{g}) =  \mathcal{C}(\Sigma, g).
\end{align}

Similarly, we can define the reflection map $\Phi _\delta : \tilde{M}_\delta \to \tilde{M}_\delta $ and from the equation (\ref{u_delta}) and the fact that $\bar{g}_\delta = \bar{g}_\delta \circ \Phi _\delta, \mathcal{R}_\delta = \mathcal{R}_\delta \circ \Phi _\delta  $, we know $u_\delta $ is also symmetric about $\Sigma \times \{\delta \} $ and particularly $\left<\nabla u_\delta , \vec{n} \right>_{\bar{g}_\delta } =0$ on $\Sigma \times \{\delta \} $, where $\vec{n}$ is the normal vector of $\Sigma \times \{\delta \} \subset (\tilde{M}_\delta , \bar{g}_\delta )$. Thus, $\tilde{g}_\delta $ is symmetric about $\Sigma \times \{\delta \} $ and the mean curvature of $\Sigma \times \{\delta \} \subset (\tilde{M}_\delta , \tilde{g}_\delta )$ is
\begin{align*}
	H_{(\Sigma \times \{\delta \} , \tilde{g}_\delta )} = u_\delta ^{-2} H_{(\Sigma \times \{\delta \} , \bar{g}_\delta )} - 2 \left< \nabla u_\delta ^{-2}, \vec{n} \right>_{\bar{g}_\delta } = 0.
\end{align*}
Let $(M_{\delta}, \tilde{g}_\delta )$ be one half of $(\tilde{M}_{\delta }, \tilde{g}_\delta )$ with minimal boundary $\Sigma_{\delta }:=\Sigma \times \{\delta \} $ and one asymptotically flat end. Then $(M_{\delta}, \tilde{g}_\delta, \Sigma_\delta )$ converges to $(M^3, g, \Sigma)$ uniformly as $\delta \to 0$. 

Without loss of generality, by applying Proposition \ref{hf-apprx}, we can assume that $(\tilde{M}_{\delta }, \tilde{g}_\delta )$ is also harmonically flat at infinity.

In summary, we have the following proposition. See more details in \cite{Bray97}.
\begin{prop}\label{doubling-approx}
	Given a complete one-ended asymptotically flat $3$-manifold $(M^3, g)$ with a connected outermost horizon boundary $\Sigma$ and nonnegative scalar curvature, there is a sequence of smooth complete $3$-manifolds $(\tilde{M}^3_\delta , \tilde{g}_\delta )$, which have nonnegative scalar curvature and two harmonically flat ends, and are symmetric about a minimal surface $\Sigma_{\delta } \subset (\tilde{M}^{3}_\delta , \tilde{g}_\delta )$, such that $(\tilde{M}_\delta ,\tilde{g}_\delta) \to (\bar{M}, \bar{g})$ and $(M_\delta , \tilde{g}_\delta ) \to (M, g)$ uniformly as $\delta \to 0$, where $(\bar{M}, \bar{g})$ is the doubling of $(M, g)$ along the boundary $\Sigma$, and $(M_\delta , \tilde{g}_\delta )$ is one half of $(\tilde{M}_\delta , \tilde{g}_\delta )$ with minimal boundary $\Sigma_\delta $.
\end{prop}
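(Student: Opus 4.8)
The plan is to assemble the construction recalled in the paragraphs immediately preceding the statement and verify that each asserted property holds; essentially all the analytic input is borrowed from \cite[Theorem 9]{Bray'01}, so the proposition is close to a bookkeeping summary. For fixed small $\delta>0$ I would first build the glued manifold $(\tilde M_\delta,\bar g_\delta)=(M_1^3,g)\sqcup(\Sigma\times(0,2\delta),G)\sqcup(M_2^3,g)$ as in \cite[Equation (92)]{Bray'01}, choosing the warped-product neck metric $G$ so that it is symmetric about $t=\delta$, makes $\Sigma\times\{\delta\}$ totally geodesic, and has scalar curvature bounded below by a constant $R_0\le 0$ independent of $\delta$. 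Then I would run Bray's conformal correction: pick the cutoff $\mathcal{R}_\delta$ with $\mathcal{R}_\delta\le R_{\bar g_\delta}$ supported near the neck and symmetric about $\Sigma\times\{\delta\}$, solve $(-8\Delta_{\bar g_\delta}+\mathcal{R}_\delta)u_\delta=0$ with $u_\delta\to 1$ at both ends as in \eqref{u_delta}, and set $\tilde g_\delta:=u_\delta^4\bar g_\delta$. The scalar curvature identity $R_{\tilde g_\delta}=u_\delta^{-4}(R_{\bar g_\delta}-\mathcal{R}_\delta)\ge 0$ is immediate, and Bray's bound $1\le u_\delta\le 1+\epsilon(\delta)$ with $\epsilon(\delta)\to 0$ gives two asymptotically flat ends with $m(\tilde g_\delta)\to m(\bar g)$.

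For the symmetry and minimality claims I would use that all data entering the construction --- $\bar g_\delta$, $\mathcal{R}_\delta$, hence the elliptic operator and the boundary conditions in \eqref{u_delta} --- are invariant under the reflection $\Phi_\delta$ exchanging the two halves and fixing $\Sigma\times\{\delta\}$. Uniqueness of the solution forces $u_\delta=u_\delta\circ\Phi_\delta$, so $\langle\nabla u_\delta,\vec n\rangle_{\bar g_\delta}=0$ along $\Sigma\times\{\delta\}$; combined with $H_{(\Sigma\times\{\delta\},\bar g_\delta)}=0$ and the conformal transformation law for mean curvature this yields $H_{(\Sigma\times\{\delta\},\tilde g_\delta)}=0$, i.e.\ $\Sigma_\delta:=\Sigma\times\{\delta\}$ is minimal in $(\tilde M_\delta,\tilde g_\delta)$ and $\tilde g_\delta$ is symmetric about it. Cutting $(\tilde M_\delta,\tilde g_\delta)$ along $\Sigma_\delta$ produces the half-manifold $(M_\delta,\tilde g_\delta)$ with minimal boundary $\Sigma_\delta$ and a single asymptotically flat end.

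For convergence I would observe that, away from the neck, $\bar g_\delta$ is literally $g$ on each copy, the neck $\Sigma\times(0,2\delta)$ shrinks as $\delta\to 0$, and $\|u_\delta-1\|_{C^0}\le\epsilon(\delta)\to 0$, so $\tilde g_\delta\to\bar g$ and $\tilde g_\delta|_{M_\delta}\to g$ uniformly, with $\Sigma_\delta\to\Sigma$ (and likewise $\mathcal{C}(\tilde g_\delta)\to\mathcal{C}(\bar g)$). Finally, to upgrade to harmonically flat ends I would apply Proposition \ref{hf-apprx} to $(\tilde M_\delta,\tilde g_\delta)$, performing the approximation symmetrically with respect to $\Phi_\delta$; absorbing the extra $\epsilon$-error into the parameter recovers the stated $\delta\to 0$ limits while preserving $R\ge 0$. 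The only genuinely delicate point is this last step --- making sure the harmonically-flat perturbation can be chosen to respect the reflection symmetry and to leave $\Sigma_\delta$ minimal --- but since $\Sigma_\delta$ sits well inside the compact core while \ref{hf-apprx} modifies the metric only near infinity in each end, symmetrizing the perturbation over the two ends causes no conflict.
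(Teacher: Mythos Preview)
Your proposal is correct and follows essentially the same approach as the paper: the proposition is stated in the paper as a summary of the preceding discussion, which builds the glued neck manifold $(\tilde M_\delta,\bar g_\delta)$ as in \cite[Equation (92)]{Bray'01}, performs the conformal correction via \eqref{u_delta}, verifies $R_{\tilde g_\delta}\ge 0$ and minimality of $\Sigma_\delta$ from the reflection symmetry, observes the uniform convergence from $1\le u_\delta\le 1+\epsilon(\delta)$, and finally invokes Proposition \ref{hf-apprx} to arrange harmonically flat ends. Your account reproduces each of these steps in the same order and with the same justifications; your extra care about symmetrizing the harmonically-flat perturbation is a reasonable remark that the paper leaves implicit.
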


To conclude this section, we give a remark about the relations between the mass, capacity, and boundary area of the outermost horizon by briefly recalling Bray's proof of the Penrose inequality in \cite{Bray'01}. Given a complete smooth $3$-manifold $(M^3, g_0)$ with a harmonically flat end, nonnegative scalar curvature, an outermost minimizing horizon $\Sigma_0$ of total area $A_0$ and total mass $m_0$. Then for all $t \geq 0$, we can construct a continuous family of conformal metrics $g_t$ on $M^3$ which are asymptotically flat with nonnegative scalar curvature and total mass $m(t)$. Let $\Sigma(t)$ be the outermost minimal enclosure of $\Sigma_0$ in $(M^3, g_t)$, and $M(t)$ the asymptotically flat manifold with boundary $\Sigma (t)$. Then $\Sigma(t)$ is a smooth outermost horizon in $(M(t), g_t)$ with area $A(t)$ being a constant function about $t$. It was shown that $m(t)$ is decreasing, and as $t\to \infty$, $(M(t), g_t)$ approaches a Schwarzschild manifold $(\mathbb{R}^3\setminus \{0\} , g_{\mathrm{Sch}})$ with total mass $\lim_{t\to \infty}m(t) = \sqrt{\frac{A_0}{16 \pi }} .$ In particular, $m_0 \geq \sqrt{\frac{A_0}{16 \pi }} $, which proves the Penrose inequality.

\section{Integral estimate of the Hessian of the Green's function}
In this and the following section, we assume that $(\tilde{M}^{3}, g)$ is a complete, asymptotically flat $3$-manifold with two harmonically flat ends $\{\infty_1, \infty_2\} $ and nonnegative scalar curvature obtained as in Proposition \ref{doubling-approx}. In particular, the topology of $\tilde{M}^3$ is $\mathbb{R}^3 \setminus \{0\} $, and there is a minimal surface $\Sigma \subset (\tilde{M}^3, g)$ such that $g$ is symmetric about $\Sigma$. Let $(M^3, g)$ be the half of $(\tilde{M}^3, g)$ which contains the end $\infty_1$ and has minimal boundary $\Sigma$. So $\Sigma$ is diffeomorphic to a $2$-sphere, $M^3$ is diffeomorphic to $\mathbb{R}^3 \setminus B(1)$, and $(\tilde{M}^3, g) = (M^3, g) \cup _{\Sigma} (M^{3,\prime}, g)$, where we use $M^{\prime}$ to denote a copy of $M$ containing the other end $\infty_2$.

Let $f(x)$ be the solution to (\ref{Green-2end}) on $(\tilde{M}^3, g)$, that is
\begin{align}\label{har-eq}
	\begin{split}
	\Delta_{g} f &=0,\\
	\lim_{x \to \infty_1} f(x) &= 1,\\
	\lim_{x\to \infty_2} f(x) &= 0.
\end{split}
\end{align}
Then $f$ is a smooth function satisfying $0< f <1$ and the following expansion at infinity (c.f. \cite{Bartnik'86, Bray'01})
\begin{align}\label{f-eq2}
		\begin{split}
		f(x) &= 1 - \frac{c_1}{|x|} + O\left( \frac{1}{|x|^2} \right)  \text{ as } x \to \infty_1, \\
		f(x) &= \frac{c_2}{|x|} + O\left( \frac{1}{|x|^2} \right)  \text{ as } x\to \infty_2,
	\end{split}
	\end{align}
	where $c_k$ are positive constants for $k= 1, 2$. Moreover, for some $\tau \in (0,1)$,
	\begin{align}\label{f-eq3}
		\begin{split}
			\partial _j f(x) & = \frac{c_1}{|x|^2}\cdot  \frac{x^j}{|x|} + O\left(\frac{1}{|x|^{2+\tau  }}\right),\\
			\partial _j \partial _k f(x) &= \frac{c_1 \delta _{jk}}{|x|^3} - \frac{3c_1}{|x|^3}\cdot  \frac{x^j x^k}{|x|^2} + O\left( \frac{1}{|x|^{3+\tau  }} \right) .
		\end{split}
	\end{align}

	By the symmetry of $\tilde{g}$ about $\Sigma$, we know that on $(M^3, g)$, $f$ satisfies (\ref{Green-1end}), that is
	\begin{align}\label{f-eq1}
	\begin{split}
		\Delta _g f &=0,\\
		f&= \frac{1}{2} \text{ on } \Sigma,\\
		\lim_{x\to \infty_1} f(x) &=1.
	\end{split}
\end{align}
So by (\ref{cap-phi}) and (\ref{phi-expan}),
\begin{align}\label{cap-f}
c_1 = \frac{1}{2}\mathcal{C}(\Sigma, g) = \frac{1}{4 \pi } \int_{\Sigma}|\nabla f|.
\end{align}
Similarly, on $(M^{\prime}, g)$, $1-f$ also satisfies (\ref{Green-1end}), so 
\begin{align}
	c_2 = \frac{1}{2} \mathcal{C}(\Sigma, g) = c_1.
\end{align}

We now introduce an auxiliary function $u := t_0\cdot  \frac{1-f}{f}$ for some $t_0>0$, and the conformal metric $h = f^{4} g$. We always assume $m(g) >0$ in the following. Then $R_h = f^{-4}R_g \geq 0$.
Notice that $u \in (0, t_0]$ on $M$, $u = t_0$ on $\Sigma $ and $u(x) \to 0$ as $x \to \infty_1$. Moreover,
\begin{align*}
	\Delta _h u =0.
\end{align*}

Since $u$ is a proper smooth map on $M$, by Sard's theorem, the regular values of $u$ is an open dense subset of $(0, t_0]$. For any regular value $t \in (0, t_0)$, we define 
\begin{align*}
	M_t := \{ t \leq u \leq t_0\} ,\ \ \Sigma_t := \{ u =t\} .
\end{align*}
Since $\Sigma $ is connected by assumptions, using maximum principle, we know that a regular level set $\Sigma_t$ is also connected and separates $\Sigma$ from $\infty_1$. In particular, $\Sigma_t$ is a connected $2$-sphere.

\begin{prop}\label{f-prop}
	We have the following integration inequality for $u$ on $(M^3, h)$:
	\begin{align*}
		1 &- \frac{\mathcal{C}(\Sigma, g) ^2}{m(g)^2} \geq \\
				 &\ \ \frac{1}{8 \pi t_0 }\int_{(M, h)}\left( \frac{\left|\nabla ^2 u + u^{-1}|\nabla u|_h^2 (h - 3 \nu \otimes \nu )\right|_h^2}{|\nabla u|_h} + R_h |\nabla u|_h \right) \mathrm{dvol}_h,
	\end{align*}
	where $\nu = \frac{\nabla u}{|\nabla u|_h}$, and the integral is taken over the regular set of $u$.
\end{prop}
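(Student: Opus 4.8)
The plan is to run Stern's harmonic level-set technique (in the spirit of \cite{BKKS'22}, \cite{Stern'22}) for the capacity potential $f$, and then upgrade the resulting estimate by a weighted volume comparison to reach the sharp constant. By the set-up we work on $(M^3,g)$, the half of the doubled manifold containing $\infty_1$, with minimal boundary $\Sigma=\{f=\tfrac12\}$, $f$ solving \eqref{f-eq1}, and $\mathcal C(\Sigma,g)=\tfrac1{4\pi}\int_\Sigma|\nabla f|$ by \eqref{cap-f}. For a regular value $t\in(\tfrac12,1)$ set $\nu=\nabla f/|\nabla f|$, let $\mathrm{II}$, $H=\tr\,\mathrm{II}$, $\mathring{\mathrm{II}}$ be the second fundamental form, mean curvature and traceless part of $\Sigma_t$ with respect to $\nu$, and put $w(f)=f^{-1}(1-f)^{-1}(2f-1)$. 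Decomposing $\nabla^2 f$ in a frame adapted to $\Sigma_t$ --- $\nabla^2 f(e_i,\nu)=\nabla^{\Sigma_t}_{e_i}|\nabla f|$, $\nabla^2 f(e_i,e_j)=\pm|\nabla f|\,\mathrm{II}_{ij}$, and $\nabla^2 f(\nu,\nu)=-|\nabla f|\,H$ (the last since $\Delta f=0$) --- a short computation yields the pointwise identity
\begin{align*}
\frac{\bigl|\nabla^2 f- w(f)|\nabla f|^2(g-3\nu\otimes\nu)\bigr|^2}{|\nabla f|}
= |\nabla f|\,|\mathring{\mathrm{II}}|^2+\frac{|\nabla^{\Sigma_t}|\nabla f||^2}{|\nabla f|}+\frac32\,|\nabla f|\,\bigl(H-2w(f)|\nabla f|\bigr)^2 ,
\end{align*}
so that, by the co-area formula, the right-hand side of the proposition equals $\int_{1/2}^1\!\int_{\Sigma_t}\bigl(|\mathring{\mathrm{II}}|^2+\tfrac{|\nabla^{\Sigma_t}|\nabla f||^2}{|\nabla f|^2}+\tfrac32(H-2w(f)|\nabla f|)^2+R_g\bigr)\,dA\,dt$. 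On the Schwarzschild potential $f_m$ this integral vanishes, since $\mathring{\mathrm{II}}=0$, $|\nabla f_m|$ is constant on level spheres, $R_{g_{sc}}=0$ and $H=2w(f_m)|\nabla f_m|$, as rigidity demands.

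\textbf{Step 2: level-set integration.} I would combine the Bochner identity $|\nabla f|\Delta|\nabla f|=|\nabla^2 f|^2-|\nabla|\nabla f||^2+\Ric(\nabla f,\nabla f)$, the splitting $|\nabla^2 f|^2-|\nabla|\nabla f||^2=|\nabla^{\Sigma_t}|\nabla f||^2+|\nabla f|^2|\mathrm{II}|^2$, the Gauss equation $2\Ric(\nu,\nu)=R_g-2K_{\Sigma_t}+H^2-|\mathrm{II}|^2$, and Gauss--Bonnet $\int_{\Sigma_t}K_{\Sigma_t}=2\pi\chi(\Sigma_t)=4\pi$ (a regular level set is a connected $2$-sphere by the maximum principle). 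Integrating the resulting identity --- possibly against a weight $\varphi(f)$ --- over $M_t=\{\tfrac12\le f\le t\}$, applying the divergence theorem, using that $\partial_\nu|\nabla f|=-|\nabla f|H=0$ on the minimal surface $\Sigma$, and letting $t\uparrow1$ (where $\int_{\Sigma_t}|\nabla f|\,H\to0$ from the expansion of $f$ at $\infty_1$), one rewrites the right-hand side of the proposition as a formula of the shape
\begin{align*}
4\pi-(\text{nonnegative terms})-6\!\int_{1/2}^1\! w(t)\Bigl(\int_{\Sigma_t}|\nabla f|\,H\,dA\Bigr)dt+6\!\int_{1/2}^1\! w(t)^2\Bigl(\int_{\Sigma_t}|\nabla f|^2\,dA\Bigr)dt .
\end{align*}
Discarding signs crudely (using only $R_g\ge0$ and Cauchy--Schwarz on the level sets) already bounds this by a fixed universal constant, which is the coarse estimate referred to in the outline; but that constant is far from $8\pi\bigl(m(g)^2-(2\mathcal C(\Sigma,g))^2\bigr)/m(g)^2$.

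\textbf{Step 3: sharpening via weighted volume comparison.} To obtain the sharp constant one must control the two level-set integrals $\int_{\Sigma_t}|\nabla f|\,H\,dA$ and $\int_{\Sigma_t}|\nabla f|^2\,dA$ precisely. I would do this by studying a weighted volume $V(t)=\int_{M_t}\psi(f)\,\dvol_g$ (equivalently, the area radius $r(t)=(|\Sigma_t|/4\pi)^{1/2}$ of the level sets) with $\psi$ modeled on the Schwarzschild warping profile --- recall $u_m(\rho_m(f))=\tfrac m2\,\tfrac1{f(1-f)}$ and $u_m''=m/u_m^{2}$: from $R_g\ge0$ and the co-area formula one derives a differential inequality for $V$, which one compares with the corresponding Schwarzschild ODE. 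This is the content of Lemma \ref{vol-comp}. The mass $m(g)$ enters here through the harmonically flat expansion $f=1-\mathcal C(\Sigma,g)/|x|+O(|x|^{-2})$ together with the metric asymptotics at $\infty_1$, which fix the sub-leading growth of $|\Sigma_t|$; the capacity enters through $\int_\Sigma|\nabla f|$. Feeding the comparison back into Step 2 closes the estimate, and keeping track of $|\Sigma|=A$ in the boundary term at $\Sigma$ yields, as a by-product, the mass--area--capacity inequality (Proposition \ref{mass-area-capacity}).

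\textbf{Main obstacle.} I expect the crux to be Step 3: isolating the correct weighted volume and running the ODE comparison using only the one-sided bound $R_g\ge0$, so that no constant is lost between the coarse estimate and the sharp one. The remaining points --- justifying the level-set limits and derivatives across the (Sard-null) critical set of $f$, the bookkeeping at the second end $\infty_2$, and the elementary but slightly lengthy identity of Step 1 --- should be routine.
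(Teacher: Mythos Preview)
Your architecture---level-set integration \`a la Stern/BKKS followed by an ODE comparison---is exactly the paper's, and your Step~1 pointwise decomposition is a pleasant reorganization equivalent to the paper's manipulation of $|\nabla^2 f|^2=|\mathcal T|^2+(\text{divergence})+(\text{rest})$. But Step~3 misidentifies both the object being compared and, crucially, how the mass enters.

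The weighted ``volume'' in the paper is not the area radius $(|\Sigma_t|/4\pi)^{1/2}$; it is $h(t)=\int_{\Sigma_t}|\nabla f|^2$, equivalently $\int_{M_t}\frac{|\nabla f|^3}{f(1-f)}$. The differential inequality you obtain in Step~2 (after using $R_g\ge0$) is a second-order ODE inequality for $\mathcal H(t)=\int_{1/2}^t s(1-s)\bigl(1-\tfrac{h(s)}{4\pi s^2(1-s)^2}\bigr)\,ds$, with explicit solution set, and the comparison is driven entirely by the \emph{initial data at the horizon}: $\mathcal H(\tfrac12)=0$ and $\mathcal H'(\tfrac12)=\tfrac14-\tfrac1\pi\int_\Sigma|\nabla f|^2=:a_0$. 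The output of Lemma~\ref{vol-comp} is then
\[
\int_M\frac{|\mathcal T|^2+R_g|\nabla f|^2}{|\nabla f|}\;\le\;32\Bigl(\frac\pi4-\int_\Sigma|\nabla f|^2\Bigr).
\]
The boundary contributions at $t\uparrow1$ all \emph{vanish} (as you note, $\int_{\Sigma_t}\partial_\nu|\nabla f|\to0$ and $\tfrac{2t-1}{t(1-t)}\int_{\Sigma_t}|\nabla f|^2\to0$), so the metric asymptotics at $\infty_1$ do not contribute the mass. The mass enters only at the very last line, and by a different mechanism than you propose: H\"older on the horizon plus the Penrose inequality,
\[
(4\pi\,\mathcal C(\Sigma,g))^2=\Bigl(\int_\Sigma|\nabla f|\Bigr)^2\le\Area(\Sigma)\int_\Sigma|\nabla f|^2\le 16\pi\,m(g)^2\int_\Sigma|\nabla f|^2,
\]
which turns $32\bigl(\tfrac\pi4-\int_\Sigma|\nabla f|^2\bigr)$ into $8\pi\,\tfrac{m(g)^2-(2\mathcal C)^2}{m(g)^2}$. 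So the ``sub-leading growth of $|\Sigma_t|$'' is a red herring; the sharp constant comes from the inner boundary and Penrose, not from infinity.
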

\begin{proof}
We first smooth $|\nabla u|_h$ by defining for any $\epsilon >0$,
$$
\phi _\epsilon := \sqrt{|\nabla u|_h^2 + \epsilon } .
$$

If $\Sigma_t$ is a regular level set of $u$, then the Gauss-Codazzi equation implies that
\begin{align*}
	R_h - 2 \Ric_h\left( \nu , \nu  \right) = R_{\Sigma_t} + |II|^2 - H^2,
\end{align*}
where $\nu = \frac{\nabla u}{|\nabla u|_h}$ and $ II = \frac{\nabla ^2_{\Sigma_t} u}{|\nabla u|_h}, H = \mathrm{tr}_{\Sigma_t} II $ are the second fundamental form and mean curvature of $\Sigma_t$ in $(M^3, h)$ respectively. 
So 
\begin{align}\label{GC}
	\begin{split}
	(R_h - R_{\Sigma_t}) |\nabla u|_h^2 &= 2 \Ric_h( \nabla u, \nabla u) + |\nabla ^2 u|_h^2 - 2 |\nabla |\nabla u|_h |_h^2\\
					  &\ \ \ - (\Delta_h u)^2 + 2 \Delta_h u \cdot \nabla_h ^2 u(\nu , \nu ) \\
	&= 2 \Ric_h( \nabla u, \nabla u) + |\nabla ^2 u|_h^2 - 2 |\nabla |\nabla u|_h |_h^2 ,
\end{split}
\end{align}
where we used the equation $\Delta_h u =0$. Together with the Bochner formula
\begin{align*}
	\Delta_h |\nabla u|_h^2 = 2 |\nabla ^2 u|_h^2 + 2 \left<\nabla \Delta_h u, \nabla u \right>_h +2 \Ric_h(\nabla u, \nabla u),
\end{align*}
we have
\begin{align*}
	\Delta_h |\nabla u|^2 = |\nabla ^2 u|_h^2 + 2|\nabla |\nabla u|_h |_h^2 + (R_h - R_{\Sigma_t}) |\nabla u|_h^2.
\end{align*}

At any point on a regular level set $\Sigma_t$,
\begin{align*}
	\begin{split}
		\Delta_h \phi _\epsilon &= \frac{1}{2} \phi _\epsilon ^{-1} \Delta |\nabla u|_h^2 - \frac{1}{4} \phi _\epsilon ^{-3} |\nabla |\nabla u|_h^2 |_h^2\\
				      &= \frac{1}{2} \phi _\epsilon ^{-1}\left( |\nabla ^2 u|_h^2 + (R_h - R_{\Sigma_t}) |\nabla u|_h^2 \right) + \frac{\epsilon }{\phi _\epsilon ^{3}} |\nabla |\nabla u|_h |_h^2 \\
				      &\geq \frac{1}{2} \phi _\epsilon ^{-1}\left( |\nabla ^2 u|_h^2 + (R_h - R_{\Sigma_t})|\nabla u|_h^2 \right) .
	\end{split}
\end{align*}
\begin{comment}
Notice that the mean curvature of $\Sigma_t$ is
\begin{align*}
	H_{\Sigma_t} = \frac{\Delta f - \nabla ^2 f(\nu , \nu )}{|\nabla f|} = - \frac{\nabla ^2 f (\nu , \nu )}{|\nabla f|}.
\end{align*}
\end{comment}

Taking integration on $M_t$ and using integration by parts and co-area formula, we have
\begin{align}\label{lap-ineq1}
	\begin{split}
		\int_{\Sigma } \langle \nabla \phi _\epsilon , \nu  \rangle_h-	\int_{\Sigma_t} &\left<\nabla \phi _\epsilon , \nu  \right>_h \\
	&\geq \frac{1}{2} \int_{M_t} \phi _\epsilon ^{-1}\left( |\nabla ^2 u|_h^2+ R_h|\nabla u|_h^2 \right) - \frac{1}{2} \int_{t}^{t_0} \int_{\Sigma_s} R_{\Sigma_s} \frac{|\nabla u|_h}{\phi _\epsilon } ds.
	\end{split}
\end{align}

Define the tensor 
\begin{align*}
	\mathcal{T}_1 := \nabla ^2 u + u ^{-1} |\nabla u|_h^2(h - 3 \nu \otimes \nu ).
\end{align*}
Then
\begin{align*}
	|\mathcal{T}_1|_h^2 = |\nabla ^2 u|^2_h + 6 u^{-2} |\nabla u|_h^{4} - 6 u ^{-1} |\nabla u|_h^2 \nabla ^2u (\nu ,\nu ).
\end{align*}
Notice that
\begin{align*}
	u^{-1} |\nabla u|_h \nabla ^2 u(\nu ,\nu ) &= u^{-1}|\nabla u|_h \langle \nabla |\nabla u|_h, \nu  \rangle_h\\
						   &= \mathrm{div}(|\nabla u|_h u^{-1} \nabla u) - |\nabla u| \mathrm{div}(u^{-1} \nabla u)\\
						   &= \mathrm{div} (|\nabla u|_h u^{-1}\nabla u) + u^{-2} |\nabla u|_h^{3}.
\end{align*}
So
\begin{align}\label{T-tensor1}
	|\mathcal{T}_1|_h^2 = |\nabla ^2 u|_h^2 - 6 |\nabla u|_h \mathrm{div} (|\nabla u|_h u^{-1}\nabla u).
\end{align}

Substituting (\ref{T-tensor1}) into (\ref{lap-ineq1}), and using the Gauss-Bonnet theorem, we have
\begin{align}\label{lap-ineq2'}
	\begin{split}
		\int_{\Sigma} &\left<\nabla \phi _\epsilon , \nu  \right>_h - \int_{\Sigma_t} \left<\nabla \phi _\epsilon , \nu  \right>_h\\
		 & \geq \frac{1}{2} \int_{M_t} \phi _\epsilon ^{-1}\left( |\mathcal{T}_1|_h^2 + R_h |\nabla u|_h^2 \right) + 3 \int_{M_t} \frac{|\nabla u|_h}{\phi _\epsilon } \mathrm{div}( |\nabla u|_h u^{-1} \nabla u)\\
		 &\ \ - \frac{1}{2} \int_{t}^{t_0} \int_{\Sigma_s} R_{\Sigma_s} \frac{|\nabla u|_h}{\phi _\epsilon } ds\\									   &\geq \frac{1}{2} \int_{M_t} \phi _\epsilon ^{-1}\left( |\mathcal{T}_1|_h^2 + R_h |\nabla u|_h^2 \right)   + 3 \int_{\Sigma } u^{-1}|\nabla u|_h^{2} - 3 \int_{\Sigma _t} u^{-1}|\nabla u|_h^{2}\\									   &\ \  -3 \epsilon \cdot \int_{M_t} \frac{\mathrm{div}(|\nabla u| u^{-1} \nabla u) }{\phi _\epsilon ^2 + \phi _\epsilon |\nabla u|_h} \\
									   &\ \ - 4 \pi (t_0-t) + \frac{\epsilon }{2} \cdot  \int_{t}^{t_0} \int_{\Sigma_s}\frac{ R_{\Sigma_s} }{\phi _\epsilon ^2 + \phi _\epsilon |\nabla u|_h} ds.
	\end{split}
\end{align}

Using the asymptotical estimates (\ref{f-eq3}), we know
\begin{align}\label{error1}
	\begin{split}
	\epsilon \cdot \int_{M_t} \frac{\mathrm{div}(|\nabla u| u^{-1} \nabla u) }{\phi _\epsilon ^2 + \phi _\epsilon |\nabla u|_h} &\leq \epsilon \cdot \int_{M_t} \frac{u^{-1}|\nabla u|_h \nabla ^2 u(\nu ,\nu )}{\phi _\epsilon ^2 + \phi _\epsilon |\nabla u|_h} \\
																    &\leq C(h) \epsilon \cdot t ^{-3},
\end{split}
\end{align}
and
\begin{align}\label{error2}
	\epsilon \cdot  \int_{t}^{t_0} \int_{\Sigma_s}\frac{ R_{\Sigma_s} }{\phi _\epsilon ^2 + \phi _\epsilon |\nabla u|_h} ds &\leq C(h) \epsilon \cdot t ^{-3}.
\end{align}

\begin{comment}

For any regular value $t $ of $u$, define
\begin{align*}
	v_\epsilon (t) := \int_{\Sigma _{t}} \phi _\epsilon |\nabla u|_h \mathrm{d A}_h.
\end{align*}
Using the co-area formula, integration by parts and $\Delta _h u =0$, we have
\begin{align*}
	\int_{t}^{t_0} \int_{\Sigma _s} \langle \nabla \phi _\epsilon , \nu  \rangle_h ds &= \int_{M_t} \langle \nabla \phi _\epsilon ,\nabla u \rangle_h\\
											&= \int_{\Sigma } \phi _\epsilon |\nabla u|_h - \int_{\Sigma _t} \phi _\epsilon |\nabla u|_h.
\end{align*}
So $v_\epsilon (t)$ can be defined for any $t \in [0, t_0)$ and it is a Lipschitz function with 
\begin{align*}
	v_\epsilon '(t) =  \int_{\Sigma _{t}} \langle \nabla \phi _\epsilon , \nu  \rangle_h \text{ for a.e. } t \in (0, t_0).
\end{align*}

From (\ref{lap-ineq2'}), together with the Gauss-Bonnet theorem and estimates (\ref{error1}), (\ref{error2}), we have
\begin{align}\label{ODE-comp1}
	\begin{split}
	\int_\Sigma \langle \nabla \phi _\epsilon ,\nu  \rangle_h - v_\epsilon '(t)& \geq \frac{1}{2} \int_{M_t} \phi _\epsilon ^{-1}\left( |\mathcal{T}_1|_h^2 + R_h |\nabla u|_h^2 \right)\\
	&\ \ - 4 \pi (t_0-t) + 3 t_0^{-1} \int_{\Sigma } |\nabla u|_h^2 - 3 t ^{-1} v_\epsilon (t) - C(h) \epsilon t ^{-3}.\\
\end{split}
\end{align}
\end{comment}

For any regular value $t \in (0, 1)$ of $u$, as in the proof of \cite[Theorem 1.2]{BKKS'22}, we can divide the integrals into two disjoint parts such that one is the integral over preimage of an open set containing the critical values of $u$. First letting $\epsilon \to 0$, and then choosing a sequence of regular values $t_i \to 0$, together with the asymptotical estimates, we have
\begin{align*}
	\int_{\Sigma } \langle \nabla |\nabla u|_h, \nu  \rangle_h + 4 \pi t_0 - 3 t_0^{-1} \int_{\Sigma }|\nabla u|_h^2 \geq \frac{1}{2} \int_{M} \frac{1}{|\nabla u|_h} \cdot \left( |\mathcal{T}_1|_h^2 + R_h |\nabla u|_h^2 \right) \mathrm{dvol}_h.
\end{align*}

Since the mean curvature of $\Sigma $ in $(M,g)$ is zero, $\nabla ^2_g f (\nu ,\nu ) =0$, which implies that
\begin{align*}
	\int_{\Sigma } \langle \nabla |\nabla u|_h, \nu \rangle_h &= \frac{2}{t_0} \int_{\Sigma } |\nabla u|_h^2.
\end{align*}
So we have 
\begin{align}\label{ineq-good1}
	4 \pi t_0 - t_0^{-1} \int_{\Sigma } |\nabla u|_h^2 \geq \frac{1}{2} \int_{M} \frac{1}{|\nabla u|_h} \cdot \left( |\mathcal{T}_1|_h^2 + R_h |\nabla u|_h^2 \right) \mathrm{dvol}_h.
\end{align}
In particular, since $R_h \geq 0$,
\begin{align}\label{L2-upper-boundary}
	\int_{\Sigma }|\nabla u|_h^2 \leq 4 \pi t_0^2.
\end{align}

On the other hand, 
\begin{align*}
	\left( \int_{\Sigma } |\nabla u|_h \right) ^2 \leq \int_{\Sigma } |\nabla u|_h^2 \cdot \mathrm{Area}_h(\Sigma ) = \frac{1}{16}\int_{\Sigma } |\nabla u|_h^2 \cdot \mathrm{Area}_g(\Sigma ),
\end{align*}
and
\begin{align*}
	\int_{\Sigma } |\nabla u|_h \mathrm{d A}_h = t_0 \int_{\Sigma } |\nabla f|_g \mathrm{d A}_g = 2 \pi t_0 \mathcal{C}(\Sigma ,g),
\end{align*}
so
\begin{align*}
	\int_{\Sigma } |\nabla u|_h^2 \geq \frac{64 \pi ^2 t_0^2 \mathcal{C}(\Sigma ,g)^2}{\mathrm{Area}_g(\Sigma )}.
\end{align*}
Together with (\ref{ineq-good1}), we have
\begin{align}\label{area-cap-ineq}
	4 \pi t_0 \left( 1- \frac{16 \pi \mathcal{C}(\Sigma , g)^2}{\mathrm{Area}_g (\Sigma )} \right) \geq \frac{1}{2} \int_{M} \frac{1}{|\nabla u|_h} \cdot \left( |\mathcal{T}_1|_h^2 + R_h |\nabla u|_h^2 \right) \mathrm{dvol}_h.
\end{align}

By the Penrose inequality $\mathrm{Area}_g(\Sigma ) \leq 16 \pi m(g)^2$, we have
\begin{align*}
	4 \pi t_0 \left( 1- \frac{\mathcal{C}(\Sigma ,g)^2}{m(g)^2} \right) \geq \frac{1}{2} \int_{M} \frac{1}{|\nabla u|_h} \cdot \left( |\mathcal{T}_1|_h^2 + R_h |\nabla u|_h^2 \right) \mathrm{dvol}_h.
\end{align*}
This concludes the proof. 

\end{proof}

Notice that (\ref{area-cap-ineq}) gives another proof of the following mass-area-capacity inequality (cf. \cite[Theorem 1]{BM08}).

\begin{thm}\label{mass-area-capacity}
Let $(M^3,g)$ be a complete, one-ended asymptotically flat $3$-manifold with
nonnegative scalar curvature and a compact connected outermost minimal boundary $\Sigma $. Then
	\begin{align}\label{m-a-c}
		 \mathcal{C}(\Sigma,g) ^2 \leq \frac{1}{16 \pi }\Area_g(\Sigma) \leq m(g)^2.
	\end{align}
\end{thm}

By the symmetry of $(\tilde{M}, g)$, applying Proposition \ref{f-prop} to $\tilde{u}:= \frac{t_0^2}{u}$ and $\tilde{h}: = (1-f)^{4} g = \left( \frac{u}{t_0} \right) ^{4} h$, we have
\begin{align}\label{M'-ineq}
	1- \frac{\mathcal{C}(\Sigma ,g)^2}{m(g)^2} \geq \frac{1}{8 \pi t_0} \int_{(M', h)}\left(  \frac{\left| \nabla ^2 \tilde{u} - \tilde{u} ^{-1} |\nabla \tilde{u}|_h^2 (h - \nu \otimes \nu ) \right|_h^2}{ |\nabla \tilde{u}|_h} + R_h |\nabla \tilde{u}|_h \right)  \mathrm{dvol}_h.
\end{align}

Proposition \ref{f-prop} is not sufficient for our control of $f$ around infinity. For this purpose, using a similar argument together with (\ref{L2-upper-boundary}), we prove the following integration inequality.
\begin{prop}\label{u-good}
	For $u = \frac{2}{\mathcal{C}(\Sigma ,g)} \cdot \frac{1-f}{f}$, we have
\begin{align}
	1 - \frac{\mathcal{C}(\Sigma ,g)}{m(g)} \geq \frac{\mathcal{C}(\Sigma ,g)}{96 \pi} \int_{(M,h)} \frac{1}{|\nabla u|_h}\left( \left|\nabla ^2 u + |\nabla u|_h^{\frac{3}{2}} (h - 3 \nu \otimes \nu )\right|_h^2 + R_h |\nabla u|_h^2 \right) \mathrm{dvol}_h.
\end{align}	
\end{prop}

\begin{proof}
	The proof is almost the same as the proof of Proposition \ref{f-prop}. The difference is that instead of considering the tensor $\mathcal{T}_1$, we define 
\begin{comment}
Also from (\ref{ODE-comp1}), we know
\begin{align*}
	(t ^{-3} v_\epsilon (t) )' \leq t ^{-3}\left( 4 \pi (t_0-t) + C(h) \epsilon t ^{-3} + \int_{\Sigma } \langle \nabla \phi _\epsilon ,\nu  \rangle_h - 3 t_0^{-1} \int_{\Sigma }|\nabla u|_h^2 \right) .
\end{align*}
Integrating from $t = t_0$, for any regular $t \in (0, t_0)$, we have
\begin{align*}
	t ^{-3}	v_\epsilon (t) & \geq t_0^{-3} v_\epsilon (t_0) + \frac{1}{2}\left( 4 \pi t_0 + \int_{\Sigma }\langle \nabla \phi _\epsilon , \nu  \rangle_h - 3 t_0 ^{-1} \int_{\Sigma } |\nabla u|_h^2 \right) \left( t_0^{-2} - t ^{-2} \right)\\
	&\ \ - 4 \pi (t_0^{-1} - t ^{-1}) - C(h) \epsilon (t_0^{-5} - t ^{-5}),
\end{align*}
which implies that
\begin{align*}
	t ^{-1} v_\epsilon (t) & \geq t ^2t_0^{-3} v_\epsilon (t_0) + \frac{1}{2}\left( 4 \pi t_0 + \int_{\Sigma }\langle \nabla \phi _\epsilon , \nu  \rangle_h - 3 t_0 ^{-1} \int_{\Sigma } |\nabla u|_h^2 \right) \left( t ^2 t_0^{-2} - 1 \right)\\
	&\ \ - 4 \pi (t ^2 t_0^{-1} - t ) - C(h) \epsilon (t ^2 t_0^{-5} - t ^{-3}).
\end{align*}
First taking $\epsilon \to 0$ and then taking $t\to 0$, we have
\begin{align*}
	4 \pi t_0 + \int_{\Sigma }\langle \nabla |\nabla u|_h , \nu  \rangle_h - 3 t_0 ^{-1} \int_{\Sigma } |\nabla u|_h^2 \geq 0.
\end{align*}

Define the tensor
\end{comment}
\begin{align*}
	\mathcal{T}_2:= \nabla ^2 u + |\nabla u|_h^{\frac{3}{2}} (h - 3 \nu \otimes \nu ).
\end{align*}
Then
\begin{align}\label{T-tensor2}
	|\mathcal{T}_2|^2_h = |\nabla ^2u|^2_h + 6 |\nabla u|^{3}_h - 6 |\nabla u|^{\frac{3}{2}}_h \nabla ^2 u (\nu ,\nu ).
\end{align}
Notice that
\begin{align*}
	|\nabla u|^{\frac{1}{2}}_h \nabla ^2 u (\nu ,\nu ) &= |\nabla u|_h^{\frac{1}{2}} \langle \nabla |\nabla u|_h, \nu  \rangle_h\\
							   &= \mathrm{div} \left( |\nabla u|_h^{\frac{1}{2}} \nabla u \right) - |\nabla u|_h \langle \nabla |\nabla u|^{-\frac{1}{2}}_h, \nabla u \rangle_h\\
							   &= \mathrm{div} \left( |\nabla u|_h^{\frac{1}{2}} \nabla u \right) + \frac{1}{2} |\nabla u|_h^{\frac{1}{2}} \langle \nabla |\nabla u|_h, \nu  \rangle_h,
\end{align*}
which implies that
\begin{align}\label{div-id1}
	|\nabla u|^{\frac{1}{2}}_h \nabla ^2 u (\nu ,\nu ) = 2 \mathrm{div} \left( |\nabla u|_h^{\frac{1}{2}} \nabla u \right) .
\end{align}
Substituting (\ref{T-tensor2}) and (\ref{div-id1}) into (\ref{lap-ineq1}), and using the Gauss-Bonnet theorem and asymptotical estimates, we have
\begin{comment}
\begin{align}\label{lap-id2}
	\begin{split}
	\frac{1}{2} \phi _\epsilon ^{-1} |\nabla^2 f|^2 &= \frac{1}{2} \phi _\epsilon ^{-1} |\mathcal{T}|^2  -3 \mathrm{div}\left( |\nabla f|^2 \phi _\epsilon ^{-1} f^{-1}(1-f)^{-1}(2f-1) \nabla f  \right)\\
							&\ \ + 6 \phi _\epsilon ^{-1} f^{-1}(1-f)^{-1}|\nabla f|^{4}\\
							&\ \ + \frac{3 \epsilon }{\phi _\epsilon ^2} \phi _\epsilon ^{-1}f^{-1}(1-f)^{-1}(2f-1) \nabla ^2 f(\nabla f, \nabla f).
\end{split}
\end{align}
So (\ref{lap-ineq1}) is equivalent to
\end{comment}
\begin{comment}
		 & \geq \frac{1}{2} \int_{M_t} \phi _\epsilon ^{-1}\left( |\mathcal{T}|_h^2 + R_h |\nabla u|_h^2 \right) - \frac{1}{2} \int_{t}^{t_0} \int_{\Sigma_s} R_{\Sigma_s} ds \\
									   &\ \ -3 \int_{M_t} |\nabla u|_h^{2} + 3 \int_{M_t}  |\nabla u|_h^{\frac{1}{2}} \nabla ^2 u (\nu ,\nu )\\
									   &\ \ +3 \epsilon \cdot \int_{M_t} \frac{ |\nabla u|_h^{\frac{1}{2}}\nabla ^2u(\nu ,\nu ) - |\nabla u|^2_h  }{\phi _\epsilon ^2 + \phi _\epsilon |\nabla u|_h} \\
									   &\ \ + \frac{\epsilon }{2} \cdot  \int_{t}^{t_0} \int_{\Sigma_s}\frac{ R_{\Sigma_s} }{\phi _\epsilon ^2 + \phi _\epsilon |\nabla u|_h} ds\\
		\end{comment}

\begin{align}\label{lap-ineq2}
	\begin{split}
		\int_{\Sigma} &\left<\nabla \phi _\epsilon , \nu  \right>_h - \int_{\Sigma_t} \left<\nabla \phi _\epsilon , \nu  \right>_h\\									   &\geq \frac{1}{2} \int_{M_t} \phi _\epsilon ^{-1}\left( |\mathcal{T}_2|_h^2 + R_h |\nabla u|_h^2 \right) - 4 \pi (t_0 -t) \\
									   &\ \ -3 \int_{M_t} |\nabla u|_h^{2} + 6 \int_{\Sigma } |\nabla u|_h^{\frac{3}{2}} - 6 \int_{\Sigma _t} |\nabla u|_h^{\frac{3}{2}}- C(g, t) \cdot \epsilon.
	\end{split}
\end{align}
\begin{comment}
where in the last inequality, we used the asymptotical estimates (\ref{f-eq3}) and the fact that $h$ is asymptotically flat.

For any regular value $t \in (0, 1)$ of $u$, as in the proof of \cite[Theorem 1.2]{BKKS'22}, we can divide the integrals into two disjoint parts such that one is the integral over preimage of an open set containing the critical values of $u$, then letting $\epsilon \to 0$, and using Gauss-Bonnet theorem, we have
\begin{align*}
	\int_{\Sigma} &\left<\nabla |\nabla u|_h , \nu  \right>_h - \int_{\Sigma_t} \left<\nabla |\nabla u|_h , \nu  \right>_h\\
		 & \geq \frac{1}{2} \int_{M_t} \frac{1}{|\nabla u|_h}\left( |\mathcal{T}|_h^2 + R_h |\nabla u|_h^2 \right) - 4 \pi (t_0- t) \\
									   &\ \ -3 \int_{M_t} |\nabla u|_h^{2} + 3 \int_{\Sigma } |\nabla u|_h^{\frac{3}{2}} - 3 \int_{\Sigma _t} |\nabla u|_h^{\frac{3}{2}}.
\end{align*}
\end{comment}
First taking $\epsilon \to 0$, and then taking $t\to 0$, we have
\begin{align*}
	\int_{\Sigma} \left<\nabla |\nabla u|_h , \nu  \right>_h + 4 \pi t_0 + 3 \int_M |\nabla u|_h^2 & -  6 \int_{\Sigma } |\nabla u|_h^{\frac{3}{2}}\\
	&\geq \frac{1}{2} \int_{M} \frac{1}{|\nabla u|_h}\left( |\mathcal{T}_2|_h^2 + R_h |\nabla u|_h^2 \right) \mathrm{dvol}_h.
\end{align*}

Notice that
\begin{align*}
	\int_{\Sigma _t}|\nabla u|_h &= \int_{\Sigma } |\nabla u|_h = 2 \pi t_0 \mathcal{C}(\Sigma ,g),
				     \end{align*}
so
\begin{align*}
	\int_{M} |\nabla u|_h^2 &= \int_0^{t_0} \int_{\Sigma _t} |\nabla u|_h 
				= 2 \pi t_0^2 \mathcal{C}(\Sigma ,g).
\end{align*}
Using (\ref{L2-upper-boundary}), we have
\begin{align*}
	12 \pi t_0 + 6 \pi t_0^2 \mathcal{C}(\Sigma ,g) - 6 \int_{\Sigma } |\nabla u|_h^{\frac{3}{2}} \geq \frac{1}{2} \int_{M} \frac{1}{|\nabla u|_h}\left( |\mathcal{T}_2|_h^2 + R_h |\nabla u|_h^2 \right) \mathrm{dvol}_h.
\end{align*}

By H\"older inequality, 
\begin{align*}
	\int_{\Sigma }|\nabla u|_h \leq \left( \int_{\Sigma }|\nabla u|_h^{\frac{3}{2}} \right) ^{\frac{2}{3}} \cdot \mathrm{Area}_h(\Sigma ) ^{\frac{1}{3}},
\end{align*}
we know
\begin{align*}
	\int_{\Sigma }|\nabla u|_h^{\frac{3}{2}} \geq \frac{4 \left( 2 \pi t_0 \mathcal{C}(\Sigma ,g) \right) ^{\frac{3}{2}}}{\mathrm{Area}_g (\Sigma ) ^{\frac{1}{2}}}.
\end{align*}
Thus
\begin{align*}
	1 + \frac{t_0 \mathcal{C}(\Sigma ,g)}{2} - 2\left( \frac{8 \pi t_0 \mathcal{C}(\Sigma ,g)^{3}}{\mathrm{Area}_g(\Sigma )} \right) ^{\frac{1}{2}} \geq \frac{1}{24 \pi t_0} \int_{M} \frac{1}{|\nabla u|_h}\left( |\mathcal{T}_2|_h^2 + R_h |\nabla u|_h^2 \right) \mathrm{dvol}_h.
\end{align*}
If we take $t_0 = \frac{2}{\mathcal{C}(\Sigma ,g)}$, then 
\begin{align*}
	1 - \left( \frac{16 \pi \mathcal{C}(\Sigma ,g)^2}{\mathrm{Area}_g(\Sigma )} \right) ^{\frac{1}{2}} \geq \frac{\mathcal{C}(\Sigma ,g)}{96 \pi} \int_{M} \frac{1}{|\nabla u|_h}\left( |\mathcal{T}_2|_h^2 + R_h |\nabla u|_h^2 \right) \mathrm{dvol}_h.
\end{align*}
Together with the Penrose inequality, this concludes the proof.
\end{proof}

Since at any regular point,
\begin{align*}
	\left| \nabla \left( |\nabla u|_h^{\frac{1}{2}} - u\right) \right|_h &\leq \frac{\left| \nabla |\nabla u|_h - 2 |\nabla u|_h^{\frac{3}{2}} \nu \right|_h}{2 |\nabla u|_h^{\frac{1}{2}}}\\
						   &\leq \frac{ \left|\nabla ^2 u + |\nabla u|_h^{\frac{3}{2}} (h - 3 \nu \otimes \nu )\right|_h}{2 |\nabla u|_h^{\frac{1}{2}}},
\end{align*}
we have the following estimate
\begin{align}
	\int_{(M,h)} \left|\nabla \left( |\nabla u|_h^{\frac{1}{2}} - u\right) \right|_h^2 \mathrm{dvol}_h \leq C\cdot \left( \frac{1}{\mathcal{C}(\Sigma ,g)}- \frac{1}{m(g)} \right) .
\end{align}

Similarly, for $\tilde{u} = \frac{4}{\mathcal{C}(\Sigma ,g)^2 u}$
 on $(M', h)$, we have
\begin{align*}
	\left|\nabla |\nabla \tilde{u}|^{\frac{1}{2}}_h \right|_h & = \frac{|\nabla |\nabla \tilde{u}|_h|_h}{2|\nabla \tilde{u}|_h^{\frac{1}{2}}} 
						       \leq \frac{\left| \nabla ^2 \tilde{u} - \tilde{u}^{-1}|\nabla \tilde{u}|_h^2(h - \nu \otimes \nu ) \right|_h }{2 |\nabla \tilde{u}|_h^{\frac{1}{2}}},
\end{align*}
which together with (\ref{M'-ineq}) implies that 
\begin{align}
	\int_{(M', h)} \left|\nabla \left( |\nabla \tilde{u}|_h^{\frac{1}{2}} - \frac{2}{\mathcal{C}(\Sigma ,g)}\right) \right|_h^2 \mathrm{dvol}_h \leq \frac{C}{\mathcal{C}(\Sigma ,g)}\cdot \left( 1- \frac{\mathcal{C}(\Sigma ,g)^2}{m(g)^2} \right) .
\end{align}
Notice that $|\nabla u|_h^{\frac{1}{2}} - u$ and $|\nabla \tilde{u}|_h^{\frac{1}{2}} - \frac{2}{\mathcal{C}(\Sigma ,g)}$ agree on the middle sphere $\Sigma $. Define 
\begin{align}\label{P-defn}
	\begin{split}
		P(x) &:= \left| |\nabla u|_h^{\frac{1}{2}}(x) - u(x) \right| \text{ when } x \in M;\\
		P(x)&:=\left| |\nabla \tilde{u}|_h^{\frac{1}{2}}(x) - \frac{2}{\mathcal{C}(\Sigma ,g)} \right| \text{ when } x \in M'.
\end{split}
\end{align}
Then $P$ is a continuous function, smooth around the end $\infty_1$ in $M$, and we have proved the following $L^2$-estimate for its gradient.
\begin{prop}\label{P-prop}
	Let $(\tilde{M}, g)$ be a two-ended asymptotically flat $3$-manifolds obtained as in Proposition \ref{doubling-approx}, and assume $m(g) \geq m_0>0$. Let $f$ be a solution of (\ref{Green-2end}), and $h := f ^{4} g$ a conformal metric on $\tilde{M}$. For the function $P$ defined in (\ref{P-defn}), there exists a uniform constant $C>0$ such that
\begin{align}\label{nbl-norm-f}
\int_{(\tilde{M}, h)} |\nabla P|^2_h \mathrm{dvol}_h \leq \frac{C}{\mathcal{C}(\Sigma ,g)} \left( 1- \frac{\mathcal{C}(\Sigma ,g)^2}{m(g)^2} \right) .	
\end{align}
Moreover, 
\begin{align}\label{nbl-f-limit}
	\lim_{x\to \infty_1} P(x) = \lim_{x\to \infty_2} P(x) = 0.
\end{align}
\end{prop}

\section{Harmonic coordinate for conformal metric}
Let $(\tilde{M}, g)$ be a two-ended asymptotically flat $3$-manifolds obtained as in Proposition \ref{doubling-approx},  $f$ the harmonic function defined by (\ref{har-eq}) on $(\tilde{M}^{3}, g)$, and $h:= f^{4}g$ the conformal metric. Then on the end $\infty_2$, since $g$ is harmonically flat, we have $h_{ij}(x) = f^{4}(x) V^{4}(x) \delta_{ij} $, where $V(x)$ is a positive bounded $\delta $-harmonic function. So on the end $\infty_2$, $h$ is conformal to a punctured ball with the conformal factor $(fV)^{4}(x)$, where $(fV)(x)$ is a bounded $\delta $-harmonic function in the punctured ball. Hence, by the removable singularity theorem, $fV$ can be extended to the whole ball, which together with the expansion (\ref{f-eq2}) and (\ref{V-expan}) implies that $h$ can be extended smoothly over the one point compactification $\tilde{M}^*:=\tilde{M}\cup \{\infty_2\}$. Also the function $P$ defined in (\ref{P-defn}) can be extended continuously to $\tilde{M}^*$. 

By standard computations, $(\tilde{M}^*, h)$ also has nonnegative scalar curvature and has a single harmonically flat end $\infty_1$ with ADM mass $m(h)= m(g) -  \mathcal{C}(\Sigma, g) \geq 0$ (cf. \cite[Equation (84)]{Bray'01}).

In the following, we assume that $|m(g) - m_0 | \ll 1$ and $|\mathcal{C}(\Sigma ,g) - m_0| \ll 1$ for a fixed $m_0>0$. By the mass-area-capacity inequality, $|\mathrm{Area}_g(\Sigma ) - 16 \pi m_0^2| \ll 1$.

\begin{comment}

For $g$-harmonic function $f$, since $|\nabla f |_g^2 = f^{4}|\nabla f |_h^2$ and $\mathrm{dvol}_g = f^{-6} \mathrm{dvol}_h$, (\ref{nbl-norm-f}) is equivalent to 
\begin{align}\label{nbl-nbl-f-h}
	\int_{\tilde{M}^*} \frac{|\nabla \left( (1-f)^{-4} |\nabla f|_h^2 \right) |_h^2}{(1-f)^{-8} |\nabla f|_h^3} \mathrm{dvol}_h \leq C(m_0)\cdot m(h). 
\end{align}
Notice that we also have
\begin{align}\label{nbl-L^2-identity}
	\int_{\tilde{M}^*} f^{-2}|\nabla f|_h^2 \mathrm{dvol}_h = 2\pi \mathcal{C}(\Sigma ,g).
\end{align}

\end{comment}

Let $\{x^{j} \}_{j=1}^{3}$ denote the asymptotically flat coordinate system of the end $\infty_1$. We firstly solve the harmonic coordinate functions $u^j$, for each $j \in \{1,2,3\}$, such that 
\begin{align}
	\begin{split}
		\Delta _{h} u^j &=0,\\
		|u^j(x)- x^j| &= o(|x|^{1-\sigma }) \text{ as } x \to \infty_1,
	\end{split}
\end{align}
where $\sigma > \frac{1}{2}$ is the order of the asymptotic flatness. 
Denote by $\mathbf{u}$ the resulting harmonic map
\begin{align*}
	\mathbf{u}:= (u^1, u^2, u^3): (\tilde{M}^*, h) \to \mathbb{R}^3.
\end{align*}

For any fixed small $0< \epsilon \ll 1$, by \cite{DS'23}, we know that there exists a connected region $\mathcal{E}_1  \subset (\tilde{M}^*, h)$ containing $\infty_1$, with smooth boundary, such that 
$$
\mathrm{Area}_h(\partial \mathcal{E}_1) \leq m(h)^{2-\epsilon },
$$ and $\mathbf{u}: \mathcal{E}_1 \to \mathcal{Y}_1:= \mathbf{u}(\mathcal{E}_1) \subset \mathbb{R}^3$ is a diffeomorphism with the Jacobian satisfying 
$$
|\mathrm{Jac}\,\mathbf{u} - \mathrm{Id}| \leq \Psi (m(h) ),
$$
and under the identification by $\mathbf{u}$, the metric tensor satisfies
$$
\sum_{j,k=1}^{3}(h_{jk} - \delta_{jk})^2 \leq m(h)^{2 \epsilon }.
$$
Moreover, $d_{pGH}\left( (\mathcal{E}_1, \hat{d}_{\mathcal{E}_1}), (\mathbb{R}^3, d_{\mathrm{Eucl}}) \right) \leq m(h)^{\epsilon }$.

Applying \cite[Lemma 3.2, Lemma 3.3]{DS'23} to the function $P$ defined in (\ref{P-defn}), and using a uniform approximation by smooth functions (cf. \cite[Theorem 4.3]{EG15}) and Proposition \ref{P-prop}, we can find a connected closed region $\mathcal{E}_2 \subset \mathcal{E}_1 \cap \{P \leq m(h)^{\epsilon }\} $, which contains a neighborhood of the infinity, and $\partial \mathcal{E}_2$ is a compact smooth surface satisfying
\begin{align}
	\mathrm{Area}_h (\partial \mathcal{E}_2) \leq C(m_0)\cdot m(h)^{1- \epsilon }.
\end{align}

We claim that $\mathcal{E}_2 \cap \Sigma \neq \emptyset$. Otherwise, since a neighborhood of the infinity is included in $\mathcal{E}_2$, a connected component of $\partial \mathcal{E}_2$ will enclose $\Sigma $ in $M$. By the outermost minimal property of $\Sigma $, we know $\mathrm{Area}_g (\Sigma ) \leq \mathrm{Area}_g (\partial \mathcal{E}_2) \leq 16 \mathrm{Area}_h(\partial \mathcal{E}_2) \leq C(m_0)\cdot m(h)^{1-\epsilon }$, which contradicts with the fact that $|\mathrm{Area}_g (\Sigma ) - m_0| \ll 1$ and $m(h) \ll 1$.

Notice that for any $x$ such that $P(x) \leq m(h)^{\epsilon }$, from (\ref{P-defn}), if $f(x) \leq \frac{1}{2}$, we have 
\begin{align*}
	\left| (1-f)^{-1} |\nabla f|_h^{\frac{1}{2}} - \left( \frac{2}{\mathcal{C}(\Sigma ,g)} \right) ^{\frac{1}{2}} \right| \leq C(m_0)\cdot m(h)^{\epsilon };
\end{align*}
if $f(x) \geq \frac{1}{2}$, we have
\begin{align*}
	f^{-1}(1-f) \left| (1-f)^{-1} |\nabla f|_h^{\frac{1}{2}} - \left( \frac{2}{\mathcal{C}(\Sigma ,g)} \right) ^{\frac{1}{2}} \right| \leq C(m_0)\cdot m(h)^{\epsilon }.
\end{align*}

\begin{comment}
Fix any base point $p \in \Sigma \cap \mathcal{E}_2$.
We claim that for any $D>0$,
\begin{align*}
	\hat{B}_{h, \mathcal{E}_2}(p, D) \subset \{f< 1- m(h)^{\epsilon }\} .
\end{align*}
To see this, for any $x \in \hat{B}_{h, \mathcal{E}_2}(p, D)$, choose a geodesic $\gamma $ from $p$ to $x$, and without loss of generality we can assume $\gamma \subset \{ f \geq \frac{1}{2}\} $. Then $|\nabla f|_h^{\frac{1}{2}}(\gamma (t) ) \leq C(m_0)$, which implies that 
\begin{align*}
	f(x) \leq 
\end{align*}
such that $f(x) = 1- m(h)^{\epsilon }$, then there exists a curve $\gamma \subset \mathcal{E}_2$ such that $\gamma (0)=p$, $\gamma (1) = x$, and $\mathrm{Length}_h(\gamma ) \leq 10 m(h)^{-\frac{\epsilon }{2}}$. But by definition of $\mathcal{E}_2$, we have
\begin{align*}
	(1-f(x) )^{-1} &\leq (1- f(p) )^{-1} + \int_{\gamma } (1-f)^{-2} |\nabla f|_h \\
	&\leq 2 + C \cdot m(h)^{-\frac{\epsilon }{2}},
\end{align*}
which is a contradiction. 

For all $0< m(h) \ll 1$, we can apply the same arguments of \cite[Section 4]{DS'23} to $\hat{B}_{h, \mathcal{E}_2}(p, 10 m(h)^{- \frac{\epsilon }{2}})$, and as a result, we can find a smooth domain $\mathcal{E}$ such that $\mathrm{Area}( \partial \mathcal{E}) \leq C m(h)^{\frac{1}{2} - 4 \epsilon }$, the induced length metric of $h$ on $\mathcal{E}$ is almost the same as Euclidean metric, and $\hat{B}_{h, \mathcal{E}}(p, m(h)^{- \frac{\epsilon }{2}}) \subset \mathcal{E}_2$. 

\end{comment}
In summary, we have the following proposition.

\begin{prop}\label{DS-har}
	There exists a connected region $\mathcal{E}  \subset (\tilde{M}^*, h)$ containing $\infty_1$, with smooth boundary, such that 
$$
\mathrm{Area}_h(\partial \mathcal{E}) \leq \Psi (m(h) ),
$$ and $\mathbf{u}: \mathcal{E} \to \mathcal{Y}:= \mathbf{u}(\mathcal{E}) \subset \mathbb{R}^3$ is a diffeomorphism with the Jacobian satisfying 
$$
|\mathrm{Jac}\,\mathbf{u} - \mathrm{Id}| \leq \Psi (m(h) ),
$$
and under the identification by $\mathbf{u}$, the metric tensor satisfies
$$
\sum_{j,k=1}^{3}(h_{jk} - \delta_{jk})^2 \leq \Psi (m(h) ).
$$ 
For any base point $p \in \mathcal{E} \cap \Sigma $, any $x_0 \in \mathbb{R}^3$ and any fixed $D>0$, 
\begin{align*}
	d_{pGH}\left( (\hat{B}_{h, \mathcal{E}}(p, D), \hat{d}_{h, \mathcal{E}}, p), (B_{\mathrm{Eucl}}(x_o, D), d_{\mathrm{Eucl}}, x_o) \right) \leq \Psi (m(h)|D),
\end{align*}
and $\Phi _{\mathbf{u}(p)} \circ \mathbf{u}$ gives a $\Psi (m(h)|D)$-pGH approximation, where $\Phi _{\mathbf{u}(p)}$ is the translation diffeomorphism of $\mathbb{R}^3$ mapping $\mathbf{u}(p)$ to $x_o$. 

Moreover, inside $ \mathcal{E}$, we have
\begin{align}\label{grad-f}
\left| |\nabla f|_h -  \frac{2(1-f)^2}{\mathcal{C}(\Sigma ,g)} \right| \leq \Psi (m(h) ).	
\end{align}

\end{prop}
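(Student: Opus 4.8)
The plan is to run the harmonic-coordinate construction of \cite{DS'23} on the conformally compactified, small-mass manifold $(\tilde{M}^*, h)$ and then intersect the region it produces with a superlevel set of an auxiliary function measuring how far $|\nabla f|_h$ is from its Schwarzschild value. First I would invoke \cite{DS'23} to get a region $\mathcal{E}_1 \ni \infty_1$ with $\mathrm{Area}(\partial\mathcal{E}_1) \leq m(h)^{2-\epsilon}$ on which $\mathbf{u}$ is a diffeomorphism onto an open subset of $\mathbb{R}^3$ with $|\mathrm{Jac}\,\mathbf{u} - \mathrm{Id}| \leq \Psi(m(h))$ and $\sum_{j,k}(h_{jk}-\delta_{jk})^2 \leq m(h)^{2\epsilon}$; this already gives the Jacobian and metric-tensor bounds asserted for $\mathcal{E}$. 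To also trap $|\nabla f|_h$, introduce $P = \left(\tfrac{m(g)^2}{4}(1-f)^{-4}|\nabla f|_h^2 - a_g^2\right)^2$, which tends to $0$ at both ends by (\ref{nbl-f-limit}). On $\{P \leq m(h)^{2\epsilon}\}\cap\{f \leq 1-m(h)^\epsilon\}$ the quantity $|\nabla P|_h^2$ is dominated pointwise by a universal multiple of the integrand of (\ref{nbl-nbl-f-h}), so that integral inequality gives $\int_{\{P \leq m(h)^{2\epsilon}\}\cap\{f\leq 1-m(h)^\epsilon\}}|\nabla P|_h^2 \leq Cm(h)$. Using (\ref{nbl-L^2-identity}) to bound the volume of this set by $Cm(h)^{-4\epsilon}$, the co-area formula then produces a regular value $s_\epsilon \in (0, m(h)^{2\epsilon})$ with $\mathrm{Area}_h(\{P = s_\epsilon\}\cap\{f\leq 1-m(h)^\epsilon\}) \leq Cm(h)^{1/2-4\epsilon}$. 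Take $\mathcal{E}_2$ to be the largest-volume connected component of $\mathcal{E}_1\cap\{P\leq s_\epsilon\}\cap\{f\leq 1-m(h)^\epsilon\}$ and, after a perturbation of the two level sets involved, arrange that $\partial\mathcal{E}_2$ is smooth with area $\leq \Psi(m(h))$.

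Next I would choose the base point and propagate a Lipschitz bound on $f$. Since $\mathrm{Area}(\Sigma) \geq 16\pi(2\mathcal{C}(\Sigma,g))^2 \geq A_0 > 0$ by Proposition \ref{mass-area-capacity} while $\partial\mathcal{E}_2$ has area $\to 0$, we have $\Sigma\cap\mathcal{E}_2 \neq \emptyset$; pick $p$ there. On $\mathcal{E}_2$ the bound $P \leq m(h)^{2\epsilon}$ forces $\tfrac{m(g)^2}{4}(1-f)^{-4}|\nabla f|_h^2 \leq a_g^2 + m(h)^\epsilon$, hence $|\nabla(1-f)^{-1}|_h = (1-f)^{-2}|\nabla f|_h \leq C$; integrating along a path in $\mathcal{E}_2$ starting at $p$, where $(1-f)^{-1} = 2$, gives $(1-f(x))^{-1} \leq 2 + C\,\hat{d}_{h,\mathcal{E}_2}(p,x)$. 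This simultaneously yields the inclusion $\hat{B}_{h,\mathcal{E}_2}(p, 10m(h)^{-\epsilon/2}) \subset \{f < 1-m(h)^\epsilon\}$ for $m(h)$ small and the pointwise bound $f(x) \leq 1 - \tfrac{1}{C D}$ on $\hat{B}_{h,\mathcal{E}_2}(p,D)$. Because this geodesic ball stays away from the ``bad'' level $\{f = 1-m(h)^\epsilon\}$, the only small-area boundary it can meet is $\partial\mathcal{E}_2\cap\{f<1-m(h)^\epsilon\}$, whose area is $\leq Cm(h)^{1/2-4\epsilon}$.

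Finally I would apply the metric-surgery argument of \cite[Section 4]{DS'23} to $\hat{B}_{h,\mathcal{E}_2}(p, 10m(h)^{-\epsilon/2})$ to produce the final smooth domain $\mathcal{E}$ with $\mathrm{Area}(\partial\mathcal{E}) \leq Cm(h)^{1/2-4\epsilon} = \Psi(m(h))$, $\hat{B}_{h,\mathcal{E}}(p, m(h)^{-\epsilon/2}) \subset \mathcal{E}_2$, and induced length metric $\hat{d}_{h,\mathcal{E}}$ uniformly close to Euclidean, so that for each fixed $D$ the map $\Phi_{\mathbf{u}(p)}\circ\mathbf{u}$ is a $\Psi(m(h)|D)$-pGH approximation of $(\hat{B}_{h,\mathcal{E}}(p,D),\hat{d}_{h,\mathcal{E}},p)$ onto $(B_{\mathrm{Eucl}}(0,D),d_{\mathrm{Eucl}},0)$. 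The two-sided estimate (\ref{grad-f}) is then nothing but $P \leq s_\epsilon \leq m(h)^{2\epsilon}$ on $\mathcal{E}_2 \supset \hat{B}_{h,\mathcal{E}}(p,D)$, with the error $m(h)^\epsilon$ absorbed into $\Psi(m(h)|D)$.

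The main obstacle is uniformity in $D$ as $m(h)\to 0$: $\partial\mathcal{E}$ must have vanishing area while every fixed-radius geodesic ball $\hat{B}_{h,\mathcal{E}}(p,D)$ stays connected and non-degenerate and keeps $f$ bounded away from $1$. This is exactly what the Lipschitz estimate for $(1-f)^{-1}$ on $\mathcal{E}_2$ secures — it confines such balls uniformly inside $\{f\leq 1-m(h)^\epsilon\}$, where $P$, hence $|\nabla f|_h$, is under control — and the point requiring the most care is verifying that running the \cite[Section 4]{DS'23} surgery on this \emph{modified} region (rather than on $\mathcal{E}_1$ directly) still costs only $\Psi(m(h))$ in boundary area.
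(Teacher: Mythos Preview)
Your proposal is correct and follows essentially the same route as the paper: first invoke \cite{DS'23} to get $\mathcal{E}_1$, then introduce the same function $P=\big(\tfrac{m(g)^2}{4}(1-f)^{-4}|\nabla f|_h^2-a_g^2\big)^2$, bound $\int|\nabla P|_h^2$ via (\ref{nbl-nbl-f-h}) on $\{P\le m(h)^{2\epsilon}\}\cap\{f\le 1-m(h)^\epsilon\}$, control the volume of that set by (\ref{nbl-L^2-identity}), use co-area to find $s_\epsilon$, take $\mathcal{E}_2$ as the largest component, pick $p\in\Sigma\cap\mathcal{E}_2$, integrate the Lipschitz bound on $(1-f)^{-1}$ to trap $\hat{B}_{h,\mathcal{E}_2}(p,10m(h)^{-\epsilon/2})$ inside $\{f<1-m(h)^\epsilon\}$, and finally run the \cite[Section 4]{DS'23} surgery to produce $\mathcal{E}$. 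The quantitative bounds you obtain ($Cm(h)^{1/2-4\epsilon}$ for the boundary area, $f\le 1-\tfrac{1}{CD}$ from the Lipschitz estimate) and the identification of (\ref{grad-f}) with $P\le s_\epsilon$ all coincide with the paper's argument.
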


\section{$W^{1,2}$-convergence of elliptic equations}\label{conv-equations}
Assume that $(\tilde{M}_i^{3}, g_i)$ is a sequence of complete two-ended asymptotically flat $3$-manifolds obtained as in Proposition \ref{doubling-approx} with $m(g_i)\to m_0$ and $  \mathcal{C}(\Sigma_i, g_i)\to m_0$ for some positive constant $m_0>0$, where $\Sigma_i \subset (\tilde{M}_i^3, g_i)$ is the minimal surface such that $(\tilde{M}_i^3, g_i)$ is symmetric about $\Sigma_i$. Let $\varepsilon _i := m(g_i) - \mathcal{C}(\Sigma _i ,g_i) \to 0$. Notice that from (\ref{m-a-c}), $|\Area_{g_i}(\Sigma_i) - 16 \pi m_0^2| \to 0$, and particularly $ \Area_{g_i}(\Sigma_i) \geq A_0 >0$ for some uniform $A_0>0$.

Let $f_i$ be the harmonic functions defined by (\ref{har-eq}) on $(\tilde{M}^{3}_i, g_i)$, $(M_i, g_i)$ be the half of $(\tilde{M}_i^{3}, g_i)$ such that $f_i$ satisfies (\ref{f-eq1}), and $h_i:= f_i^{4}g_i$ the conformal metrics. Using the same notations as in previous section, let $\tilde{M}^*_i$ be the one point compactification $\tilde{M}_i \cup \{\infty_2\} $, then $(\tilde{M}^*_i, h_i)$ is a sequence of one-ended asymptotically flat $3$-manifolds with nonnegative scalar curvature and ADM mass $m(h_i)= \varepsilon _i \to 0$. 

Let $\mathcal{E}_i$ be regions given by Proposition \ref{DS-har}.  Taking any base point $p_i \in \mathcal{E}_i \cap \Sigma_i = \mathcal{E}_i \cap \{ f_i = \frac{1}{2}\} $, then
\begin{align*}
	(\mathcal{E}_i, \hat{d}_{h_i, \mathcal{E}_i}, p_i) \to (\mathbb{R}^3, d_{\mathrm{Eucl}}, x_o)
\end{align*}
in the pointed measured Gromov-Hausdorff topology, where $x_o \in \mathbb{R}^3$, the harmonic maps $\mathbf{u}_i$ with $\mathbf{u}_i(p_i)=x_o$ are $\Psi (\varepsilon _i)$-pGH approximation, and for any $D>0$, $(\mathbf{u}_i)_{\sharp} (\mathrm{dvol}_{h_i}|_{\hat{B}_{h_i, \mathcal{E}_i}(p_i, D)})$ weakly converges to $\mathrm{dvol}_{\mathrm{Eucl}}|_{B(x_o,D)}$ as $i\to \infty$.

By (\ref{grad-f}) and the Arzel\`{a}-Ascoli theorem, up to a subsequence, $f_i \to f_\infty$ locally uniformly for some nonnegative bounded Lipschitz function $f_\infty$ on $\mathbb{R}^3$. 

\begin{lem}\label{weak-W12}
	Up to a subsequence, $f_i$ converges to $f_\infty$ in the weakly $W^{1,2}$-sense. That is, for any uniformly converging sequence of compactly supported Lipschitz functions $\psi _i \to \psi $ and $\nabla \psi _i \to \nabla \psi $ in $L^2$, we have 
	\begin{align*}
		\lim_{i\to \infty} \int_{\mathcal{E}_i} f _i \psi _i \mathrm{dvol}_{h_i} & = \int_{\mathbb{R}^3} f _\infty \psi \mathrm{dvol}_{\mathrm{Eucl}},\\
		\lim_{i\to \infty} \int_{\mathcal{E}_i} \left<\nabla f _i, \nabla \psi _i \right>_{h_i} \mathrm{dvol}_{h_i} &= \int_{\mathbb{R}^3} \left<\nabla f _\infty, \nabla \psi  \right>_{\mathrm{Eucl} } \mathrm{dvol}_{\mathrm{Eucl}}.
	\end{align*}
\end{lem}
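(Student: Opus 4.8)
The plan is to transport everything to $\mathbb R^3$ by the harmonic maps $\mathbf u_i$ of Proposition \ref{DS-har} (normalized so that $\mathbf u_i(p_i)=x_o$) and then run a weak-$L^2$-compactness argument; the one genuine subtlety is that the images $\mathcal Y_i:=\mathbf u_i(\mathcal E_i)\subset\mathbb R^3$ are not all of $\mathbb R^3$ but only fill it up in measure. Fix $D>0$ with $\mathrm{supp}\,\psi\subset B(x_o,D)$ and (by the meaning of the convergence $\psi_i\to\psi$) $\mathrm{supp}\,\psi_i\subset\hat B_{h_i,\mathcal E_i}(p_i,D)$ for large $i$. Write $\tilde\xi_i:=\xi_i\circ\mathbf u_i^{-1}$, $\tilde\psi_i:=\psi_i\circ\mathbf u_i^{-1}$, $\tilde h_i:=(\mathbf u_i^{-1})^*h_i$, $\mu_i:=(\mathbf u_i)_\sharp\mathrm{dvol}_{h_i}$, all on $\mathcal Y_i$. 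From Proposition \ref{DS-har} one has $\|\tilde h_i-\delta\|_{C^0(\mathcal Y_i\cap B(x_o,D+1))}\to 0$ (hence $\sqrt{\det\tilde h_i}\to1$) and $\mathrm{Area}(\partial\mathcal Y_i)=\mathrm{Area}(\partial\mathcal E_i)\to0$; from the convergence statement recalled just before the lemma, $\mu_i\rightharpoonup\mathrm{dvol}_{\mathrm{Eucl}}$ on $B(x_o,D)$, which together with $\sqrt{\det\tilde h_i}\to1$ forces $|B(x_o,D)\setminus\mathcal Y_i|\to0$. Finally, Proposition \ref{DS-har} combined with (\ref{nbl-xi}) and (\ref{xi-quad}) gives, on $\hat B_{h_i,\mathcal E_i}(p_i,D)$, the bounds $f_i\le 1-\tfrac{1}{CD}$, $|\nabla f_i|_{h_i}\le C$, $|\nabla\xi_i|_{h_i}\le C(D)$ and $\xi_i\le C(D)$, which transport to uniform $L^\infty$ and Lipschitz bounds for $\tilde\xi_i$ (and likewise $\tilde f_i$) on $\mathcal Y_i\cap B(x_o,D)$. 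I will write the argument for $\xi_i$ only; that for $f_i$ is word for word the same.

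For the first (zeroth-order) identity I would write $\int_{\mathcal E_i}\xi_i\psi_i\,\mathrm{dvol}_{h_i}=\int_{\mathcal Y_i}\tilde\xi_i\tilde\psi_i\,d\mu_i$, add and subtract $\int\xi_\infty\psi\,d\mu_i$, and conclude from $\tilde\xi_i\tilde\psi_i\to\xi_\infty\psi$ uniformly on $\mathcal Y_i\cap B(x_o,D)$ with $\sup_i\mu_i(B(x_o,D))<\infty$, together with $\mu_i\rightharpoonup\mathrm{dvol}_{\mathrm{Eucl}}$ and $\xi_\infty\psi\in C_c(\mathbb R^3)$. This is routine.

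For the gradient identity the key step is to identify the weak $L^2$-limit of the gradients. Extending $\nabla\tilde\xi_i$ by $0$ off $\mathcal Y_i$, the pointwise bound $|\nabla\tilde\xi_i|_\delta\le C(D)$ makes it bounded in $L^2(B(x_o,D))$, so along a subsequence $\nabla\tilde\xi_i\mathbf 1_{\mathcal Y_i}\rightharpoonup V$ weakly in $L^2$. To see $V=\nabla\xi_\infty$, test against $\Phi\in C^\infty_c(B(x_o,D);\mathbb R^3)$: since $\mathcal Y_i$ is a smooth domain and $\tilde\xi_i$ is smooth on it, integration by parts gives
\begin{align*}
\int_{\mathcal Y_i}\langle\nabla\tilde\xi_i,\Phi\rangle_\delta\,dx=-\int_{\mathcal Y_i}\tilde\xi_i\,\mathrm{div}\,\Phi\,dx+\int_{\partial\mathcal Y_i}\tilde\xi_i\,\langle\Phi,\nu\rangle\,dA,
\end{align*}
where the boundary integral is $\le\|\tilde\xi_i\|_{L^\infty}\|\Phi\|_{L^\infty}\,\mathrm{Area}(\partial\mathcal Y_i)\to0$, and $-\int_{\mathcal Y_i}\tilde\xi_i\,\mathrm{div}\,\Phi\to-\int_{\mathbb R^3}\xi_\infty\,\mathrm{div}\,\Phi$ by the uniform convergence $\tilde\xi_i\to\xi_\infty$ on $\mathcal Y_i$ and $|B(x_o,D)\setminus\mathcal Y_i|\to0$. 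Hence $V=\nabla\xi_\infty$ in the distributional sense, and independence of the subsequence upgrades this to $\nabla\tilde\xi_i\mathbf 1_{\mathcal Y_i}\rightharpoonup\nabla\xi_\infty$ for the full sequence. Then I rewrite $\int_{\mathcal E_i}\langle\nabla\xi_i,\nabla\psi_i\rangle_{h_i}\,\mathrm{dvol}_{h_i}=\int_{B(x_o,D)}\langle\nabla\tilde\xi_i\mathbf 1_{\mathcal Y_i},\,B_i\nabla\tilde\psi_i\mathbf 1_{\mathcal Y_i}\rangle_\delta\,dx$ with $B_i:=\sqrt{\det\tilde h_i}\,\tilde h_i^{-1}$ (set to $\mathrm{Id}$ off $\mathcal Y_i$) satisfying $B_i\to\mathrm{Id}$ uniformly; since $\nabla\tilde\psi_i\to\nabla\psi$ in $L^2$ and $|B(x_o,D)\setminus\mathcal Y_i|\to0$, the factor $B_i\nabla\tilde\psi_i\mathbf 1_{\mathcal Y_i}\to\nabla\psi$ \emph{strongly} in $L^2$, and pairing strong with weak gives the limit $\int_{\mathbb R^3}\langle\nabla\xi_\infty,\nabla\psi\rangle_\delta\,\mathrm{dvol}_{\mathrm{Eucl}}$, as desired.

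The hard part is exactly the identification $V=\nabla\xi_\infty$: \emph{a priori} the weak $L^2$-limit of $\nabla\tilde\xi_i$ could acquire an extra contribution concentrated along the seams $\mathbf u_i(\partial\mathcal E_i)$, and it is precisely the hypothesis $\mathrm{Area}(\partial\mathcal E_i)\to0$ — the negligibility of the spikes — that annihilates the boundary integral above and rules this out. Everything else (the transport by $\mathbf u_i$, the uniform bounds, the zeroth-order limit) is routine once Proposition \ref{DS-har} and the convergence statements preceding the lemma are in hand.
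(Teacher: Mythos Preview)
Your proof is correct and follows essentially the same strategy as the paper: transport everything to $\mathbb{R}^3$ via $\mathbf u_i$, use the uniform $C^0$/Lipschitz bounds coming from Proposition \ref{DS-har}, and pass to the limit by weak compactness of the gradients paired against the strongly converging $\nabla\psi_i$.

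The one place you differ is in how you identify the weak $L^2$-limit of $\nabla\tilde\xi_i\,\mathbf 1_{\mathcal Y_i}$ with $\nabla\xi_\infty$. You do this by integrating by parts on $\mathcal Y_i$ and killing the boundary term with $\mathrm{Area}(\partial\mathcal E_i)\to 0$. The paper instead remarks that a uniformly Lipschitz sequence converging uniformly has a locally $W^{1,2}$-convergent subsequence ``by a smooth mollifier argument'' --- concretely, one may extend $\tilde\xi_i$ from $\mathcal Y_i$ to all of $\mathbb R^3$ preserving the Lipschitz constant, and then the identification of the weak gradient limit is the standard one on $\mathbb R^3$, with no boundary term appearing at all. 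So your claim that ``it is precisely the hypothesis $\mathrm{Area}(\partial\mathcal E_i)\to 0$ that rules this out'' overstates things for \emph{this} lemma: the area condition is not actually needed here (it \emph{is} essential in the next proposition, where one integrates $\Delta_{h_i}\xi_i$ by parts against test functions supported in $U$ and the boundary term cannot be avoided). Both routes are short and valid; yours is perhaps slightly more self-contained, while the paper's makes clearer that weak $W^{1,2}$-convergence is a consequence of uniform Lipschitz bounds alone.
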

\begin{proof}
	Under the diffeomorphism $\mathbf{u}_i$, we can identify $\mathcal{E}_i$ as a subset in $\mathbb{R}^3$. Suppose that $\psi _i, \psi $ have support in $U \subset B(x_o,D)$ for some $D>0$. Then since $f _i \to f _\infty$ uniformly, by Proposition \ref{DS-har} we know
	\begin{align*}
		\lim_{i\to \infty} & \left| \int_{U \cap \mathcal{E}_i} f _i \psi _i \mathrm{dvol}_{h_i} - \int_{U} f _\infty \psi \mathrm{dvol}_{\mathrm{Eucl}} \right| \\
		& \leq \lim_{i\to \infty}  \int_{U \cap \mathcal{E}_i} |f _i \psi _i - f _\infty \psi | \mathrm{dvol}_{h_i} + \lim_{i\to \infty} \left| \int_{U\cap \mathcal{E}_i} f _\infty \psi \mathrm{dvol}_{h_i} - \int_{U} f _\infty \psi \mathrm{dvol}_{\mathrm{Eucl}} \right| \\
		& \leq C \lim_{i\to \infty} \int_{U \cap \mathcal{E}_i}|f _i \psi _i - f _\infty \psi | \mathrm{dvol}_{\mathrm{Eucl}}\\
		&=0.
	\end{align*}
	Similarly, since $|\nabla f _i|_{h_i}$ and $|\nabla \psi _i|_{h_i}$ are uniformly bounded and $h_i$ converges uniformly to $g_{\mathrm{Eucl}} $,
	\begin{align*}
		\lim_{i\to \infty} & \left| \int_{U \cap \mathcal{E}_i} \left< \nabla f _i, \nabla  \psi _i\right>_{h_i} \mathrm{dvol}_{h_i} - \int_{U} \left< \nabla f _\infty, \nabla  \psi\right>_{\mathrm{Eucl} } \mathrm{dvol}_{\mathrm{Eucl}} \right| \\
		& \leq C \lim_{i\to \infty} \int_{U \cap \mathcal{E}_i}\left| \left<\nabla f_i, \nabla \psi _i \right>_{h_i} - \left<\nabla f _\infty, \nabla \psi  \right>_{\mathrm{Eucl} } \right| \mathrm{dvol}_{\mathrm{Eucl}} \\
		&= C \lim_{i\to \infty} \int_{U \cap \mathcal{E}_i} \left|h_i^{jk} \partial _j f _i \partial _k \psi_i - \delta ^{jk} \partial _j f _\infty \partial _k \psi \right| \mathrm{dvol}_{\mathrm{Eucl}}\\
		&\leq C \lim_{i\to \infty} \sum_{j=1}^{3} \int_{U \cap \mathcal{E}_i}\left( |\left<\partial _j f _i - \partial _j f _\infty \right>|\cdot |\partial _j \psi | + | \partial _j \psi _i - \partial _j \psi |\right) \mathrm{dvol}_{\mathrm{Eucl}}  \\
		&=0,	
	\end{align*}
	where in the first inequality we used $\Vol(U \setminus \mathcal{E}_i) \to 0$, and in the last step, we used the assumption that $\partial _j \psi _i \to \partial _j \psi $ in $L^2$ and the fact that uniformly Lipschitz sequence $f _i$ has a locally $W^{1,2}$-weak convergent subsequence.
\end{proof}

\begin{prop}
	$f _\infty$ satisfies the following equation weakly on $\mathbb{R}^3$ :
	\begin{align}\label{f2-eq}
\Delta _{\mathrm{Eucl} } f_\infty ^2 = \frac{24}{m_0^2} \cdot (1-f_\infty)^4.
\end{align}
\end{prop}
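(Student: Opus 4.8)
The plan is to pass the elliptic identity (\ref{xi-eq}), $\Delta_{h_i}\xi_i = 6(1-f_i)^{-4}|\nabla f_i|_{h_i}^2$, to the limit by testing against an arbitrary $\psi\in C_c^\infty(\mathbb{R}^3)$. On the left-hand side I would invoke the $W^{1,2}$-convergence of Lemma \ref{weak-W12}, and on the right-hand side I would use that the source term converges uniformly on bounded sets to a constant, together with the weak convergence of the volume measures.

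First I would fix $\psi\in C_c^\infty(\mathbb{R}^3)$ with $\mathrm{supp}\,\psi\subset B_{\mathrm{Eucl}}(x_o,D)$ and set $\psi_i:=\psi\circ\mathbf{u}_i$; since $\mathbf{u}_i$ is a $\Psi(m(h_i)|D)$-pGH approximation, for all large $i$ the support of $\psi_i$ lies in $\hat B_{h_i,\mathcal{E}_i}(p_i,2D)$, and $\psi_i\to\psi$ in $C^0$ with $\nabla\psi_i\to\nabla\psi$ in $L^2$ under the identification by $\mathbf{u}_i$, because $\mathrm{Jac}\,\mathbf{u}_i\to\mathrm{Id}$ by Proposition \ref{DS-har}. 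Since $m(g_i)\to m_0$ and $2\mathcal{C}(\Sigma_i,g_i)=m(g_i)-\varepsilon_i\to m_0$, the ratio $a_{g_i}=\frac{m(g_i)}{2\mathcal{C}(\Sigma_i,g_i)}$ tends to $1$, so (\ref{grad-f}) on $\hat B_{h_i,\mathcal{E}_i}(p_i,2D)$ gives
\[
6(1-f_i)^{-4}|\nabla f_i|_{h_i}^2=\frac{24}{m(g_i)^2}\Big(\tfrac{m(g_i)^2}{4}(1-f_i)^{-4}|\nabla f_i|_{h_i}^2\Big)\xrightarrow{\ i\to\infty\ }\frac{24}{m_0^2}
\]
uniformly on $\mathrm{supp}\,\psi_i$. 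Combining this with $\psi_i\to\psi$ uniformly and the weak convergence $(\mathbf{u}_i)_\sharp(\mathrm{dvol}_{h_i}|_{\hat B_{h_i,\mathcal{E}_i}(p_i,2D)})\rightharpoonup\mathrm{dvol}_{\mathrm{Eucl}}|_{B(x_o,2D)}$, I would conclude that $\int_{\mathcal{E}_i}\psi_i\cdot 6(1-f_i)^{-4}|\nabla f_i|_{h_i}^2\,\mathrm{dvol}_{h_i}\to\frac{24}{m_0^2}\int_{\mathbb{R}^3}\psi\,\mathrm{dvol}_{\mathrm{Eucl}}$.

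Then I would integrate by parts on $\mathcal{E}_i$ and discard the boundary contribution. Writing $\int_{\mathcal{E}_i}\langle\nabla\xi_i,\nabla\psi_i\rangle_{h_i}\,\mathrm{dvol}_{h_i}=-\int_{\mathcal{E}_i}\psi_i\,\Delta_{h_i}\xi_i\,\mathrm{dvol}_{h_i}+\int_{\partial\mathcal{E}_i}\psi_i\langle\nabla\xi_i,\nu\rangle_{h_i}\,dA_{h_i}$, the boundary term is bounded by $\|\psi\|_{C^0}\big(\sup_{\hat B_{h_i,\mathcal{E}_i}(p_i,2D)}|\nabla\xi_i|_{h_i}\big)\mathrm{Area}_{h_i}(\partial\mathcal{E}_i)$, and by (\ref{nbl-xi}) together with (\ref{grad-f}) one has $|\nabla\xi_i|_{h_i}\le C(1-f_i)^{-1}\le CD$ on that ball, while $\mathrm{Area}_{h_i}(\partial\mathcal{E}_i)\to 0$; hence this term vanishes. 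Thus $\int_{\mathcal{E}_i}\langle\nabla\xi_i,\nabla\psi_i\rangle_{h_i}\,\mathrm{dvol}_{h_i}\to-\frac{24}{m_0^2}\int_{\mathbb{R}^3}\psi\,\mathrm{dvol}_{\mathrm{Eucl}}$, while Lemma \ref{weak-W12} gives $\int_{\mathcal{E}_i}\langle\nabla\xi_i,\nabla\psi_i\rangle_{h_i}\,\mathrm{dvol}_{h_i}\to\int_{\mathbb{R}^3}\langle\nabla\xi_\infty,\nabla\psi\rangle_\delta\,\mathrm{dvol}_{\mathrm{Eucl}}$. Equating the two limits yields $\int_{\mathbb{R}^3}\langle\nabla\xi_\infty,\nabla\psi\rangle_\delta\,\mathrm{dvol}_{\mathrm{Eucl}}=-\frac{24}{m_0^2}\int_{\mathbb{R}^3}\psi\,\mathrm{dvol}_{\mathrm{Eucl}}$ for every $\psi\in C_c^\infty(\mathbb{R}^3)$, which is the weak equation $\Delta_\delta\xi_\infty=\frac{24}{m_0^2}$.

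The substantive input is already available — the uniform convergence of the source term to $24/m_0^2$ is immediate from Proposition \ref{DS-har} and $a_{g_i}\to1$ — so the points requiring care are the bookkeeping ones: checking that $\psi_i=\psi\circ\mathbf{u}_i$ meets the hypotheses of Lemma \ref{weak-W12} under the identification by $\mathbf{u}_i$, and verifying that the boundary integral over $\partial\mathcal{E}_i$ is negligible. I expect this last point — genuinely using that the spike region's boundary area tends to zero, so that the equation survives the passage to the limit — to be the crux, as stressed in the outline of proof.
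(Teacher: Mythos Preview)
Your proposal is correct and follows essentially the same approach as the paper: test against $\psi\in C_c^\infty(\mathbb{R}^3)$, pull back via $\mathbf{u}_i$, integrate by parts, kill the boundary term using $|\nabla\xi_i|_{h_i}\le C(D)$ and $\mathrm{Area}_{h_i}(\partial\mathcal{E}_i)\to0$, use (\ref{grad-f}) for the source, and invoke Lemma \ref{weak-W12} for the gradient term. The only cosmetic difference is that the paper integrates by parts over the compact set $U_i=\mathbf{u}_i^{-1}(U)$ rather than all of $\mathcal{E}_i$, which amounts to the same thing since $\psi_i$ is compactly supported.
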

\begin{proof}
It's enough to show that for any smooth function $\psi $ on $\mathbb{R}^3$ with compact support in $U \subset B(x_o, D) $, 
\begin{align*}
	-\int_{U} \left<\nabla f^2 _\infty, \nabla \psi\right>_{\mathrm{Eucl} } \mathrm{dvol}_{\mathrm{Eucl}} = \frac{24}{m_0^2} \int_{U} (1-f_\infty)^4 \psi \mathrm{dvol}_{\mathrm{Eucl}} .
\end{align*}
Define $\psi _i := \psi \circ \mathbf{u}_i$ on $\mathcal{E}_i$. Then $\psi _i$ are uniformly Lipschitz and $\psi _i \to \psi $ uniformly.

Notice that
\begin{align*}
	\Delta _{h_i}f_i^2 = 6|\nabla f_i|_{h_i}^2.
\end{align*}
Using integration by parts, for $U_i := \mathbf{u}_i^{-1}U \subset \mathcal{E}_i$, we have
\begin{align*}
	-\int_{U_i} \left< \nabla f _i^2, \nabla \psi _i \right>_{h_i} \mathrm{dvol}_{h_i} &= \int_{U_i} 6 \psi _i |\nabla f_i|^2_{h_i} \mathrm{dvol}_{h_i}  - \int_{\partial U_i} \psi _i \left< \nabla f _i, \vec{n} \right> \mathrm{dA}_{h_i}.
\end{align*}
Since $|\psi _i |, |\nabla f _i|_{h_i} \leq C$ on $U_i$,
$$
\left|\int_{\partial U_i} \psi_i \left<\nabla f _i, \vec{n} \right> \mathrm{dA}_{h_i}\right|\leq C\cdot  \Area_{h_i}(\partial \mathcal{E}_i) \to 0.
$$ 
Using (\ref{grad-f}), we have 
\begin{align*}
	\lim_{i\to \infty} \int_{U_i} 6 \psi _i |\nabla f_i|_{h_i}^2 \mathrm{dvol}_{h_i} &= \lim_{i \to \infty} \int_{U_i} \frac{24}{\mathcal{C}(\Sigma _i, g_i)^2} (1-f_i)^{4} \psi _i \\										  &= \frac{24}{m_0^2} \int_{U}(1-f_\infty)^4 \psi \mathrm{dvol}_{\mathrm{Eucl}} .
\end{align*}
From Lemma \ref{weak-W12}, we know 
$$\lim_{i\to \infty} \int_{U_i} \left< \nabla f^2 _i, \nabla \psi _i\right>_{h_i} \mathrm{dvol}_{h_i} = \int_{U} \left< \nabla f^2 _\infty, \nabla \psi\right>_{\mathrm{Eucl}} \mathrm{dvol}_{\mathrm{Eucl}} ,$$
which concludes the proof. 
\end{proof}

From standard elliptic theory, we know $f_\infty^2 \in C^{1,\alpha }_{loc}(\mathbb{R}^3)$. Also from (\ref{grad-f}), we have
\begin{align*}
	|\nabla f_\infty^2|_{\mathrm{Eucl}} \leq \frac{4 f_\infty(1-f_\infty)^2}{m_0},
\end{align*}
that is, $\left|\nabla (1- f_\infty^2)^{-1}\right|_{\mathrm{Eucl}} \leq C(m_0)$ at any point $x \in \mathbb{R}^3$ such that $f_\infty(x) \neq 1$.
Since $f_\infty(x_o) = \frac{1}{2}$, for any $D>0$ and any $x \in B_{\mathrm{Eucl}}(x_o, D)$, we have
\begin{align*}
	(1- f_\infty^2)^{-1}(x) \leq \frac{4}{3} + C(m_0) D.
\end{align*}
Together with the uniform convergence of $f_i$, the following holds 
\begin{align}\label{f-ball}
	\hat{B}_{h_i, \mathcal{E}_i}(p_i, D) \subset \left\{x \in \tilde{M}_i^* : 1- f_i(x) \geq  \frac{1}{C(m_0)(1+D)} \right\}.
\end{align}

Now we consider another function $\xi _i(x):=\frac{f_i^2}{(1-f_i)^2}$ defined on $(\tilde{M}^*_i, h_i)$, which satisfies $\xi _i(p_i) =1$ and
\begin{align}\label{xi-eq}
	\Delta _{h_i} \xi _i = 6 (1-f_i)^{-4}|\nabla f_i|_{h_i}^2.
\end{align}

For any fixed $D>0$ and any $x \in \hat{B}_{h_i, \mathcal{E}_i}(p_i, D)$, by (\ref{grad-f}) and (\ref{f-ball}), we have 
\begin{align}\label{nbl-xi}
|\nabla \xi _i|_{h_i}(x) = \frac{2 f_i |\nabla f_i|_{h_i}(x)}{(1-f_i)^{3}}\leq C(m_0,D).
\end{align}

Similarly, by the Arzel\`{a}-Ascoli theorem, up to a subsequence, $\xi _i \to \xi _\infty$ locally uniformly and weakly $W^{1,2}$ for some Lipschitz function $\xi _\infty$ on $\mathbb{R}^3$, and $\xi _\infty = \frac{f_\infty^2}{(1-f_\infty)^2}$. We also have 
\begin{align}\label{xi-eq}
		\Delta _{\mathrm{Eucl} } \xi_\infty = \frac{24}{m_0^2},
	\end{align}
	so $\xi _\infty$ is a smooth function. 
Using those properties, we prove the following rigidity:
\begin{lem}\label{lem-rigidity}
$$
\xi _\infty(x) = \frac{4}{m_0^2}|x|^2,
$$ 
and
\begin{align}\label{f-infinity}
	f_\infty(x) = \left( 1 + \frac{m_0}{2|x|} \right) ^{-1}.
\end{align}
\end{lem}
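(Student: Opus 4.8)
The plan is to turn the statement into an overdetermined elliptic rigidity problem on $\mathbb{R}^3$ and solve it by a Liouville argument. The first thing I would do is extract a \emph{second} equation for $\xi_\infty$, besides $\Delta_\delta\xi_\infty=\frac{24}{m_0^2}$. Rewriting (\ref{grad-f}) via the chain rule $\nabla\xi_i=\frac{2f_i}{(1-f_i)^3}\nabla f_i$ gives, pointwise on $\hat B_{h_i,\mathcal{E}_i}(p_i,D)$,
\[
|\nabla\xi_i|_{h_i}^2=\frac{16}{m(g_i)^2}\Big(\tfrac{m(g_i)^2}{4}(1-f_i)^{-4}|\nabla f_i|_{h_i}^2\Big)\,\xi_i=\frac{16}{m(g_i)^2}\big(a_{g_i}^2+O(\Psi(m(h_i)|D))\big)\,\xi_i,
\]
whose right side converges locally uniformly to $\frac{16}{m_0^2}\xi_\infty$ (using $m(g_i)\to m_0$, $a_{g_i}=\frac{m(g_i)}{2\mathcal{C}(\Sigma_i,g_i)}\to1$ and $\xi_i\to\xi_\infty$ locally uniformly); in particular $|\nabla\xi_i|_{h_i}$ is locally uniformly bounded and $|\nabla\xi_i|_{h_i}^2\to\frac{16}{m_0^2}\xi_\infty$ in $L^1_{\mathrm{loc}}$. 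Testing $\Delta_{h_i}\xi_i=6(1-f_i)^{-4}|\nabla f_i|_{h_i}^2$ against $\xi_i\psi^2$ for a cutoff $\psi$ and letting $i\to\infty$, using Lemma \ref{weak-W12} to replace $h_i$ by $\delta$ exactly as in the proof of the preceding Proposition, and comparing with the integrated form of $\Delta_\delta\xi_\infty=\frac{24}{m_0^2}$ tested against $\xi_\infty\psi^2$, one concludes $|\nabla\xi_\infty|_\delta^2=\frac{16}{m_0^2}\xi_\infty$ on all of $\mathbb{R}^3$.

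Next I would run the Liouville step. Since $\Delta_\delta\big(\frac{4}{m_0^2}|x|^2\big)=\frac{24}{m_0^2}$, the function $w:=\xi_\infty-\frac{4}{m_0^2}|x|^2$ is weakly harmonic, hence smooth and harmonic on $\mathbb{R}^3$ (so $\xi_\infty\in C^\infty$). The bound $0\le\xi_\infty(x)\le C(1+|x|)^2$, inherited from (\ref{xi-quad}) and $\mathbf{u}_i(p_i)=x_o$, gives $|w(x)|\le C'(1+|x|^2)$, so $w(x)=\alpha+\langle\beta,x\rangle+\tfrac12\langle Ax,x\rangle$ with $A$ symmetric and $\mathrm{tr}\,A=0$. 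Plugging $\xi_\infty=\frac{4}{m_0^2}|x|^2+w$ into $|\nabla\xi_\infty|^2=\frac{16}{m_0^2}\xi_\infty$ and matching homogeneous parts forces $A^2=-\frac{8}{m_0^2}A$, $A\beta=0$ and $\alpha=\frac{m_0^2}{16}|\beta|^2$. The eigenvalues of the symmetric, trace-free $A$ then lie in $\{0,-\frac{8}{m_0^2}\}$ and sum to $0$, so $A=0$; completing the square gives $\xi_\infty(x)=\frac{4}{m_0^2}|x-x_1|^2$ with $x_1=-\frac{m_0^2}{8}\beta$.

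Finally I would locate the center and conclude. Evaluating at $x_o=(\frac{m_0}{2},0,0)$ gives $\xi_\infty(x_o)=\lim_i\xi_i(p_i)=1$ (since $f_i(p_i)=\frac12$), hence $|x_o-x_1|=\frac{m_0}{2}=|x_o|$. The center $x_1$, being the unique zero of $\xi_\infty$ and hence the limit of $\mathbf{u}_i(\infty_2)$ (the compactified end where $f_i=0$), is pinned to the origin by the normalization of the asymptotically flat/harmonic coordinates at $\infty_1$ used in \cite{DS'23}, equivalently by the absence of a dipole term at order $|x|^{-2}$ in the expansion (\ref{f-eq2}) of $f_i$. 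Thus $\xi_\infty(x)=\frac{4}{m_0^2}|x|^2$, and solving $\xi_\infty=\frac{f_\infty^2}{(1-f_\infty)^2}$ with $0\le f_\infty<1$ yields
\[
f_\infty(x)=\frac{\sqrt{\xi_\infty(x)}}{1+\sqrt{\xi_\infty(x)}}=\frac{\frac{2}{m_0}|x|}{1+\frac{2}{m_0}|x|}=\Big(1+\frac{m_0}{2|x|}\Big)^{-1}.
\]

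The main obstacle is the first step: promoting the weak $W^{1,2}$-convergence of Lemma \ref{weak-W12} to the \emph{pointwise} eikonal identity $|\nabla\xi_\infty|^2=\frac{16}{m_0^2}\xi_\infty$ on all of $\mathbb{R}^3$ — it is this second equation, and not $\Delta_\delta\xi_\infty=\frac{24}{m_0^2}$ alone, that makes the Liouville argument rigid — and, secondarily, correctly pinning $x_1=0$, where the precise normalization of the coordinates and the fact that $\mathrm{Area}(\partial\mathcal{E}_i)\to0$ are used.
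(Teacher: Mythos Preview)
Your proof is correct and takes a genuinely different route from the paper's.

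The paper never establishes the eikonal \emph{equality} $|\nabla\xi_\infty|_\delta^2=\frac{16}{m_0^2}\xi_\infty$; it works only with the \emph{inequality} $|\nabla\xi_\infty|_\delta^2\le\frac{16}{m_0^2}\xi_\infty$, which passes directly to the limit from the pointwise bound in (\ref{nbl-xi})--(\ref{grad-f}) by lower semicontinuity of the local Lipschitz constant under the uniform convergence $\xi_i\to\xi_\infty$. The paper then writes down, rather quickly, a three--parameter diagonal ansatz for the harmonic quadratic $\xi_\infty-\frac{4}{m_0^2}|x|^2$ and tests the inequality along each coordinate axis to kill the extra coefficients one by one. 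You instead first upgrade to the full equality by a testing argument --- in effect you prove the strong $W^{1,2}$ convergence of Lemma~\ref{W12-norm} \emph{before} the rigidity rather than after --- and then the algebra $A^2=-\tfrac{8}{m_0^2}A$, $\mathrm{tr}\,A=0$, $A\beta=0$, $\alpha=\tfrac{m_0^2}{16}|\beta|^2$ gives $\xi_\infty=\frac{4}{m_0^2}|x-x_1|^2$ with no ansatz. Your route is a little longer but cleaner and more self--contained; the paper's is shorter but leans on a specific normal form whose generality is not fully justified (the inequality alone, applied to the \emph{general} harmonic quadratic, only gives $\xi_\infty=\frac{4}{m_0^2}|x-a|^2+c$ with $c\ge 0$, and one still needs something like your equality to force $c=0$).

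On pinning $x_1=0$: the argument via ``normalization of the harmonic coordinates at $\infty_1$'' or ``no dipole in (\ref{f-eq2})'' does not do it --- once $\mathbf{u}_i$ is translated so that $\mathbf{u}_i(p_i)=x_o$, the only surviving constraint is $\xi_\infty(x_o)=1$, i.e.\ $|x_o-x_1|=\frac{m_0}{2}$, and nothing forces $x_1$ to be the origin. The paper's ansatz sweeps the same issue under the rug. It is harmless, however: one may simply translate $\mathbb{R}^3$ to put $x_1$ at $0$ (and relabel $x_o$ accordingly), since everything downstream in Section~7 depends only on the isometry class of $f_\infty^{-4}\delta$.
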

\begin{proof}
	Using the relation $\xi _\infty = \frac{f_\infty^2}{(1-f_\infty)^2}$, (\ref{f2-eq}) and (\ref{xi-eq}), at any point $x$ such that $f_\infty(x) \neq 0$, we have
	\begin{align*}
		\Delta _{\mathrm{Eucl} } \xi _\infty &= (1-f_\infty)^{-3}\Delta _{\mathrm{Eucl}} f_\infty^2 + 6f_\infty(1-f_\infty)^{-4} |\nabla f_\infty|^2\\
		&= \frac{24}{m_0^2} \cdot (1-f_\infty)+ 6f_\infty(1-f_\infty)^{-4} |\nabla f_\infty|^2\\
		&= \frac{24}{m_0^2},
	\end{align*}
	which implies that $|\nabla f_\infty|^2= \frac{4}{m_0^2}\cdot (1-f_\infty)^4$. So 
	$$|\nabla \xi _\infty|^2 = 4(1-f_\infty)^{-6}f_\infty^2|\nabla f_\infty|^2= \frac{16}{m_0^2} f_\infty^2(1-f_\infty)^{-2} = \frac{16}{m_0^2} \xi _\infty,$$
	which also holds at any $ x \in \mathbb{R}^3$ since $\xi _\infty$ is smooth. 

	From the Bochner formula, we have
	\begin{align*}
		|\nabla ^2 \xi _\infty - \frac{8}{m_0^2} \delta |^2 =0.
	\end{align*}
Consider the smooth function $\eta (x):= \xi _\infty(x) - \frac{4}{m_0^2}|x|^2$. Then $\nabla ^2 \eta =0$, i.e. $\eta $ is a linear function, and $\eta (x_o) =0$. Assuming that $\eta (x) = (x-x_o)\cdot b$ for some $b \in \mathbb{R}^3$. Using $|\nabla \xi _\infty |^2 = \frac{16}{m_0^2} \xi _\infty$, we have $|b|^2 + \frac{16}{m_0^2} x_o \cdot b =0$. So 
\begin{align*}
	\xi _\infty(x) = \frac{4}{m_0^2} \left| x + \frac{m_0^2}{8}b\right|^2.
\end{align*}
Up to a translation of the coordinates, we can take $x_o = (\frac{m_0}{2}, 0, 0)$ and $\xi (x) = \frac{4}{m_0^2}|x|^2$.
\begin{comment}
By taking limit of (\ref{xi-quad}), $\eta $ has quadratic growth, so $\eta $ is a polynomial of degree at most $2$. Since $\xi _\infty \geq 0$ and $\xi _\infty(x_o)= \xi _\infty( (\frac{m_0}{2}, 0, 0) )=1$, we can assume $\xi _\infty$ has the following form:
$$
\xi _\infty(x) = c_{11} \left(x_1- \frac{m_0}{2}\right) ^2 + c_{22} x_2^2 + c_{33} x_3^2 + \frac{4}{m_0^2}|x|^2,
$$ 
where $c_{11}, c_{22}, c_{33}$ are constants to be determined such that $c_{11} + \frac{4}{m_0^2} \geq 0, c_{22} + \frac{4}{m_0^2} \geq 0, c_{33}+ \frac{4}{m_0^2} \geq 0$ and $c_{11}+ c_{22}+ c_{33}=0$.
Taking limit of (\ref{nbl-xi}), $|\nabla \xi _\infty|_{\delta }^2 \leq \frac{16}{m_0^2} \xi _\infty $, which implies that for any $x_1 \in \mathbb{R}$,
\begin{align*}
	\left( c_{11}(x_1 - \frac{m_0}{2}) + \frac{4}{m_0^2}x_1 \right)^2 &\leq \frac{4}{m_0^2}\left( c_{11}(x_1 - \frac{m_0}{2})^2 + \frac{4}{m_0^2} x_1^2 \right)  
\end{align*}
from which we can get that $c_{11}=0$. So we also have that for any $x_2, x_3 \in \mathbb{R}$,
\begin{align*}
	\left( c_{22} + \frac{4}{m_0^2} \right) ^2 x_2^2 &\leq \frac{4}{m_0^2} (c_{22}+ \frac{4}{m_0^2}) x_2^2 \\
\left( c_{33} + \frac{4}{m_0^2} \right) ^2 x_3^2 &\leq \frac{4}{m_0^2} (c_{33}+ \frac{4}{m_0^2}) x_3^2 ,
\end{align*}
from which we can get that $c_{22},c_{33}\leq 0$, and thus $c_{22}=c_{33}=0$ by $c_{22}+ c_{33}=0$. 
\end{comment}
\end{proof}

\section{Proof of main theorems}
We first prove the stability of the mass-capacity inequality.
\begin{proof}[Proof of Theorem \ref{mass-cap-stability}]
	Assume that $(M_i^3, g_i)$ is a sequence of complete one-ended asymptotically flat $3$-manifolds with nonnegative scalar curvature and compact connected outermost horizon boundaries $\Sigma_i$. Suppose that for some constant $m_0>0$, both $m(g_i) \to m_0$ and $  \mathcal{C}(\Sigma_i, g_i) \to m_0$. 

By Proposition \ref{doubling-approx}, without loss of generality, we can assume that there exists a doubling $(\tilde{M}_i, g_i)$ of each $(M_i, g_i)$ such that $(\tilde{M}_i, g_i)$ is symmetric about minimal surface $\Sigma_i$, has nonnegative scalar curvature and two harmonically flat ends. Let $f_i$ be harmonic functions defined by (\ref{Green-2end}), $h_i= f_i^{4} g_i$ and $\tilde{M}^*_i$ the compactification. Denote by $\varepsilon _i = m(h_i) \to 0$. Using the same notations in last section, we have proved that there exist smooth regions $\mathcal{E}_i \subset \tilde{M}^*_i$ such that $\Area_{h_i}(\partial \mathcal{E}_i) \to 0$, and for base point $p_i \in \mathcal{E}_i \cap \Sigma_i$,
$$
(\mathcal{E}_i, \hat{d}_{h_i, \mathcal{E}_i}, p_i) \xrightarrow{\mathrm{pm-GH}} (\mathbb{R}^3, d_{\mathrm{Eucl}}, x_o),
$$ 
where $x_o=(\frac{m_0}{2}, 0,0)$
and 
$$f_i \to f_\infty(x) = \left( 1+ \frac{m_0}{2|x|} \right) ^{-1}$$
locally uniformly.

	In the following, we will only consider the manifold $M_i$, which is equivalent to $\{ \frac{1}{2} \leq f_i < 1\} $. Since $h_i = f_i^{4}g_i$, we know $g_i$ and $h_i$ are two uniformly equivalent metrics on $M_i$. When discussing uniform upper or lower bound on volume, area, or distance, etc., there is no difference between using either metric, so we will omit the subscript for simplicity. Additionally, we always identify $\mathcal{E}_i$ as a subset of $\mathbb{R}^3$  using the diffeomorphism $\mathbf{u}_i$. 

Notice that on $\{f_\infty \geq \frac{1}{2}\} $, the standard Schwarzschild metric is given by
\begin{align*}
	g_{\mathrm{Sch}} = f_\infty^{-4} g_{\mathrm{Eucl}}.
\end{align*}
Since $g_i = f_i^{-4} h_i$, for any $D>0$ and any $x \in \hat{B}_{h_i, \mathcal{E}_i}(p_i, D) \cap \{f_i \geq \frac{1}{2}\} $, we have
\begin{align*}
	\| g_i - g_{\mathrm{Sch}} \|(x) \leq \Psi (\varepsilon _i| D).
\end{align*}

Fix $0< \epsilon \ll 1$. By co-area formula,
\begin{align*}
	\int_{\frac{1}{2}+ 2\epsilon }^{\frac{1}{2}+ 4\epsilon } \mathrm{Length}(\partial \mathcal{E}_i \cap \{f_\infty = t\}) dt \leq \int_{\partial \mathcal{E}_i \cap \{\frac{1}{2} \leq f_\infty \leq \frac{3}{4} \} } |\nabla f_\infty| \leq C \cdot \Area(\partial \mathcal{E}_i).
\end{align*}
So there exists a regular value $t_0 \in (\frac{1}{2}+ 2\epsilon , \frac{1}{2}+ 4\epsilon )$ such that $\partial \mathcal{E}_i \cap \{f_\infty = t_0\} $ consists of smooth curves, and the total length satisfies 
\begin{align*}
	\mathrm{Length}(\partial \mathcal{E}_i \cap \{f_\infty = t_0\} ) \leq \frac{C \cdot \Area(\partial \mathcal{E}_i)}{\epsilon } = \Psi (\varepsilon _i| \epsilon ).
\end{align*}
We can assume $t_0 = \frac{1}{2}+ 3\epsilon $ for simplicity. 
By the uniform convergence $f_i \to  f_\infty$, for all large enough $i$, we have
$$
\mathcal{E}_i \cap \{\frac{1}{2}+ 3\epsilon \leq f_\infty \leq 1 - \frac{1}{\epsilon }\} \subset M_i.
$$
Define $\mathcal{E}_i(\epsilon )$ as the noncompact component of  $$ \mathcal{E}_i \cap \{f_\infty \geq \frac{1}{2}+ 3\epsilon \} \cap M_i.$$

We make some modifications on $ \mathcal{E}_i(\epsilon )$ for later usage as in the proof of \cite[Lemma 4.3]{DS'23}. Let $\{D_k\} _{k=0}^{N} \subset \mathcal{E}_i$ be the components of $\{f_\infty= \frac{1}{2}+ 3\epsilon \} \cap \mathcal{E}_i $, and assume $D_0$ has the largest area. Then $ \sum_{k \geq 1}^{} \mathrm{diam}D_k \leq \Psi (\varepsilon _i| \epsilon )$. Choose $x_k \in D_k$ for each $k\geq 1$. Notice that the Euclidean Hausdorff distance between $D_0$ and $D_k$ is bounded by $ \Psi (\varepsilon _i|\epsilon )$. Then there exists $y_k \in D_0$ such that $\hat{d}_{h_i, \mathcal{E}_i}(x_k, y_k) \leq \Psi (\varepsilon _i| \epsilon )$. For each $k \geq 1$, let $\gamma _k$ be a geodesic between $x_k, y_k$ for the metric $\hat{d}_{h_i, \mathcal{E}_i}$. By thickening each $\gamma _k$, we can get thin solid tubes $T_k$ inside $\Psi (\varepsilon _i|\epsilon )$-neighborhood of $\gamma _k$ with arbitrarily small boundary area. Let $\mathcal{E}_i(\epsilon )' := \mathcal{E}_i(\epsilon ) \cup (\cup _k T_k).$ We then get a smooth connected subset by smoothing corners of $\mathcal{E}_i(\epsilon )'$.

For simplicity, we still denote the modified $\mathcal{E}_i(\epsilon )'$ by $\mathcal{E}_i(\epsilon )$. For all large enough $i$, we have $\mathcal{E}_i(\epsilon ) \subset M_i \cap \mathcal{E}_i$. Denote by 
$$\Sigma _i(\epsilon ) = \overline{\partial \mathcal{E}_i(\epsilon ) \setminus \partial \mathcal{E}_i}.$$
Then $\Sigma _i(\epsilon )$ is a connected closed surface with boundary, whose total length is bounded by $\Psi (\varepsilon _i|\epsilon )$, and $\partial \mathcal{E}_i(\epsilon ) = \Sigma _i(\epsilon ) \cup \left(  \partial \mathcal{E}_i \cap \{f_\infty \geq \frac{1}{2} + 3 \epsilon \} \right) $. Notice that $\Sigma _i(\epsilon )$ lies inside $\Psi (\varepsilon _i| \epsilon )$-neighborhood of $\{f_\infty = \frac{1}{2}+ 3 \epsilon \} \cap \mathcal{E}_i$, which is contained in $(\Psi (\varepsilon _i|\epsilon ) + C\cdot \epsilon )$-neighborhood of $\{f_i = \frac{1}{2}\} \cap \mathcal{E}_i$ with respect to both distances $\hat{d}_{\mathrm{Eucl}}$ and $\hat{d}_{g_i}$. See also the Figure \ref{fig2}.

\begin{figure}[htpb]
	\centering
	\includegraphics[width=0.8\textwidth]{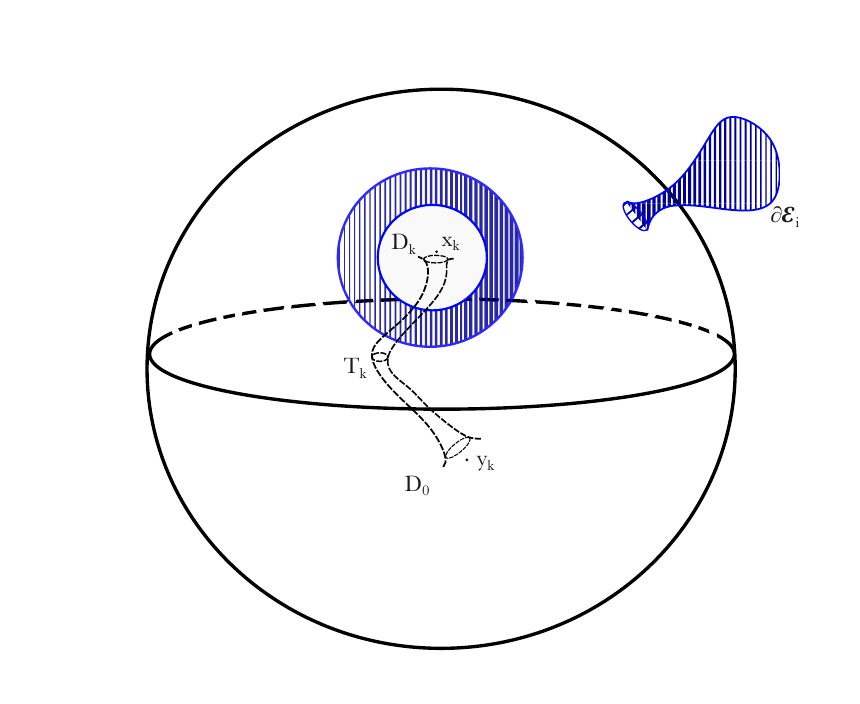}
	\caption{This picture shows the constructed region $\mathcal{E}_i(\epsilon )$ near $\Sigma _i(\epsilon )$. The sphere is $\{f_\infty = \frac{1}{2} + 3 \epsilon \} $, and the blue shaded region is included in the complement of $\mathcal{E}_i$. $\partial \mathcal{E}_i(\epsilon )$ consists of two parts: one is $\partial \mathcal{E}_i \cap \{f_\infty \geq \frac{1}{2}+ 3 \epsilon \} $, the boundary of the blue region; the other is $\Sigma _i(\epsilon )$, which is connected and consists of the connected sum of $D_0$ and $D_k$ using the boundary of the tubes $T_k$. }
	\label{fig2}
\end{figure}

Denote by $M_{\mathrm{Sch}}(\epsilon ) := \{f_\infty \geq \frac{1}{2}+ 3\epsilon \} $, and $x_o(\epsilon ) \in \{ f_\infty = \frac{1}{2}+ 3\epsilon \} $ a base point closest to $x_o$. 
\begin{prop}\label{eps-GH}

	\begin{align}\label{total}
		(\mathcal{E}_{i}(\epsilon ), \hat{d}_{g_i, \mathcal{E}_i(\epsilon )}, q_i) \to (M_{\mathrm{Sch}}(\epsilon ), d_{g_{\mathrm{Sch}}}, x_o(\epsilon ) )
	\end{align}
	in the pointed measured Gromov-Hausdorff topology for some base points $q_i \in \Sigma _i(\epsilon )$, and
	\begin{align}\label{boundary}
		(\Sigma _i(\epsilon ), \hat{d}_{g_i, \Sigma _i(\epsilon )}) \to (\partial M_{\mathrm{Sch}}(\epsilon ), \hat{d}_{g_{\mathrm{Sch}}, \partial M_{\mathrm{Sch}}(\epsilon )} )
	\end{align}
	in the measured Gromov-Hausdorff topology for the induced length metrics.
\end{prop}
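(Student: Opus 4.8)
The plan is to push everything into the fixed model space $\mathbb{R}^3$ via the harmonic coordinate maps $\mathbf{u}_i$, to upgrade the convergence of $h_i$ already established in Proposition \ref{DS-har} to a $C^0$-convergence of the original metrics $g_i$ towards the Schwarzschild metric $g_{sc}$ on the truncated region $M_{sc}(\epsilon)$, and then to invoke Theorem \ref{GH-spikes} together with the length-space arguments of \cite[\S 4]{DS'23}, which convert $C^0$-convergence of metric tensors modulo negligible spikes into pointed measured Gromov--Hausdorff convergence of the induced length metrics. Since $\mathcal{E}_i(\epsilon)$ does not contain the earlier base point $p_i$, we work with a base point $q_i \in \partial^*\mathcal{E}_i(\epsilon)$ chosen so that $\mathbf{u}_i(q_i) \to x_o(\epsilon)$.

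First I would show that, identifying $\mathcal{E}_i$ with a subset of $\mathbb{R}^3$ via $\mathbf{u}_i$, one has $(\mathbf{u}_i^{-1})^*g_i \to g_{sc}$ in $C^0_{\mathrm{loc}}$ on $M_{sc}(\epsilon)$, where $M_{sc}(\epsilon)$ is realized in the Cartesian chart \eqref{sc-x} (with $m = m_0$) as the exterior of a Euclidean ball $B(r(\epsilon))$ and $g_{sc} = f_\infty^{-4}\delta$ by \eqref{sc-x} and \eqref{f-infinity}. Since $g_i = f_i^{-4}h_i$ with $0 < f_i < 1$ we have $g_i \ge h_i$, so $\hat{B}_{g_i,\mathcal{E}_i(\epsilon)}(q_i,D) \subset \hat{B}_{h_i,\mathcal{E}_i}(q_i,D)$, and everything relevant to a fixed distance scale $D$ takes place inside a fixed Euclidean ball $B(0,R)$, $R = R(D,m_0)$. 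On $B(0,R) \cap \{f_\infty \ge \frac{1}{2} + 2\epsilon\}$ the functions $f_i$ are, for large $i$, bounded away from $0$ and from $1$ and converge uniformly to $f_\infty$, hence $f_i^{-4} \to f_\infty^{-4}$ uniformly there; combined with $(\mathbf{u}_i^{-1})^*h_i \to \delta$ in $C^0$, this gives the claimed convergence of $g_i$. Applying the same uniform convergence to $\mathrm{dvol}_{g_i} = f_i^{-6}\,\mathrm{dvol}_{h_i}$, together with the weak convergence $(\mathbf{u}_i)_\sharp \mathrm{dvol}_{h_i} \to \mathrm{dvol}_{\mathrm{Eucl}}$, yields $(\mathbf{u}_i)_\sharp \mathrm{dvol}_{g_i} \to \mathrm{dvol}_{g_{sc}}$ weakly on bounded subsets. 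To identify the limit set, note that by the uniform convergence $f_i \to f_\infty$ and the fact that $\mathbf{u}_i(\mathcal{E}_i)$ eventually contains any fixed compact subset of the exterior region (Proposition \ref{DS-har}), the essential part $\mathbf{u}_i\big(\mathcal{E}_i \cap \{f_\infty \ge \frac{1}{2} + 3\epsilon\} \cap M_i\big)$ Hausdorff-converges in $\mathbb{R}^3$ to $M_{sc}(\epsilon)$; the glued-in tubes $T_k$ lie in $\Psi(\varepsilon_i|\epsilon)$-neighborhoods and have vanishing boundary area, and $\partial\mathcal{E}_i$ has vanishing area, so we are in the exact hypotheses of Theorem \ref{GH-spikes} and \cite[\S 4]{DS'23}, which give \eqref{total} with $\Phi_{\mathbf{u}_i(q_i)} \circ \mathbf{u}_i$ as the approximating maps.

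For \eqref{boundary} I would run the analogous two-dimensional argument: up to a subset of length $\Psi(\varepsilon_i|\epsilon)$, $\partial^*\mathcal{E}_i(\epsilon)$ is a graph over the fixed smooth sphere $\{f_\infty = \frac{1}{2} + 3\epsilon\}$, the induced metrics converge in $C^0$ to $g_{sc}|_{\partial M_{sc}(\epsilon)}$ by the same conformal computation, and the induced area measures converge weakly, so the induced length metrics converge to $(\partial M_{sc}(\epsilon), \hat{d}_{g_{sc},\partial M_{sc}(\epsilon)})$ in the measured Gromov--Hausdorff topology. The main obstacle is the length-metric bookkeeping in the step above: one must rule out that excising $\partial\mathcal{E}_i$ or gluing in the tubes $T_k$ creates spurious short-cuts or artificially separates pairs of points in the limit. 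This is precisely the phenomenon that Theorem \ref{GH-spikes} and \cite[\S 4]{DS'23} are designed to control -- negligible spikes do not affect the induced length metric -- so the real work is checking that their hypotheses survive the passage from $h_i$ to $g_i$, which follows from the $C^0$-closeness $(\mathbf{u}_i^{-1})^*g_i \to g_{sc}$ established above together with the uniform equivalence $c\,h_i \le g_i \le C\,h_i$ on the relevant region.
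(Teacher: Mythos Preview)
Your proposal correctly identifies the key ingredient---the uniform convergence $f_i\to f_\infty$ on $\hat{B}_{h_i,\mathcal{E}_i}(p_i,D)$ together with $(\mathbf{u}_i^{-1})^*h_i\to\delta$ gives $(\mathbf{u}_i^{-1})^*g_i\to g_{sc}$ in $C^0_{\mathrm{loc}}$---and this is exactly what the paper uses. But your invocation of Theorem~\ref{GH-spikes} does not go through: that theorem (and the analysis of \cite[\S4]{DS'23}) is formulated for limits equal to $\mathbb{R}^n$, whereas $M_{sc}(\epsilon)$ is a manifold \emph{with a macroscopic boundary} $\{f_\infty=\tfrac12+3\epsilon\}$. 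This boundary has fixed positive area, so it is not a ``negligible spike''; the hypotheses of Theorem~\ref{GH-spikes} are not met.

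The concrete difficulty you miss is this: for $x,y\in\mathcal{E}_i(\epsilon)$ close to $\partial^*\mathcal{E}_i(\epsilon)$, an $\hat{d}_{h_i,\mathcal{E}_i}$-almost-geodesic (which you know is close to a Euclidean segment) may well exit $\mathcal{E}_i(\epsilon)$ into the region $\{f_\infty<\tfrac12+3\epsilon\}\cap\mathcal{E}_i$. Nothing in Theorem~\ref{GH-spikes} controls $\hat{d}_{g_i,\mathcal{E}_i(\epsilon)}(x,y)$ in that situation. The paper handles this by proving \eqref{boundary} \emph{first} and deriving the comparison $\hat{d}_{g_i,\partial^*\mathcal{E}_i(\epsilon)}(x',y')\le C\,\hat{d}_{h_i,\mathcal{E}_i}(x',y')+\Psi(\varepsilon_i|\epsilon)$; then, whenever an almost-geodesic in $\mathcal{E}_i$ exits $\mathcal{E}_i(\epsilon)$, one reroutes the exiting portion along $\partial^*\mathcal{E}_i(\epsilon)$ at controlled cost. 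Only with this boundary detour in hand does the interior statement \eqref{total} follow. Your ``main obstacle'' paragraph focuses on the small-area pieces ($\partial\mathcal{E}_i$, the tubes $T_k$), but those are already under control from Proposition~\ref{DS-har}; the genuine new obstacle is the large new boundary at $\{f_\infty=\tfrac12+3\epsilon\}$, and your argument needs an explicit mechanism---as in the paper---to deal with curves that cross it.
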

\begin{proof}
	We firstly show (\ref{boundary}). By the construction of $\mathcal{E}_i(\epsilon )$,
	we know $\Sigma _i(\epsilon )$ is inside $\Psi (\varepsilon _i| \epsilon )$-neighborhood of $\partial \mathcal{E}_i(\epsilon ) \cap \partial M_{\mathrm{Sch}}(\epsilon )$, so
		$$
		(\Sigma _i(\epsilon ), \hat{d}_{g_{\mathrm{Sch}}, \Sigma _i(\epsilon )} ) \to (\partial M_{\mathrm{Sch}}(\epsilon ), \hat{d}_{g_{\mathrm{Sch}}, \partial M_{\mathrm{Sch}}(\epsilon )})
		$$
		in the Gromov-Hausdorff topology. It remains to show that for any $x, y \in \Sigma _i(\epsilon )$,
	$$ \lim_{i\to \infty}\hat{d}_{g_i, \Sigma _i(\epsilon )}(x,y) = \hat{d}_{g_{\mathrm{Sch}}, \partial M_{\mathrm{Sch}}(\epsilon )}(x,y). $$
	Let $\gamma \subset \partial M_{\mathrm{Sch}}(\epsilon ) $ be a geodesic between $x$ and $y$ for distance $\hat{d}_{g_{\mathrm{Sch}}, \partial M_{\mathrm{Sch}}(\epsilon ) }$. By our construction of $\mathcal{E}_i(\epsilon )$, we can always perturb $\gamma $ to get $\tilde{\gamma } \subset \partial \mathcal{E}_i(\epsilon ) \setminus \partial \mathcal{E}_i$ such that $|\mathrm{Length}_{\mathrm{Sch}}(\gamma ) - \mathrm{Length}_{\mathrm{Sch}}(\tilde{\gamma })| \to 0$. Then 
	\begin{align*}
		\hat{d}_{g_i, \Sigma _i(\epsilon )}(x,y) &\leq \int_{0}^{1}|\tilde{\gamma }'|_{g_i} \\
	&\leq (1+ \Psi (\varepsilon _i) ) \int_{0}^{1}|\tilde{\gamma }'|_{g_{\mathrm{Sch}}} .
	\end{align*}
	Taking $i\to \infty$, we have
	\begin{align*}
		\lim_{i\to \infty}\hat{d}_{g_i, \Sigma _i(\epsilon )}(x,y) \leq \hat{d}_{g_{\mathrm{Sch}}, \partial M_{\mathrm{Sch}}(\epsilon )}(x,y).
	\end{align*}
	Similarly, it's easy to check 
	\begin{align*}
		\hat{d}_{g_{\mathrm{Sch}}, \partial M_{\mathrm{Sch}}(\epsilon )}(x,y) \leq \lim_{i\to \infty} \hat{d}_{g_i, \Sigma _i(\epsilon )}(x,y).
	\end{align*}
	This completes the Gromov-Hausdorff convergence in (\ref{boundary}).

	Notice that for any $x, y \in \partial M_{\mathrm{Sch}}(\epsilon )$, there is a uniform constant $C>0$ such that 
	\begin{align*}
		d_{\mathrm{Eucl}}(x,y) \leq \hat{d}_{g_{\mathrm{Sch}}, \partial M_{\mathrm{Sch}}(\epsilon )}(x, y) \leq C d_{\mathrm{Eucl}}(x,y),
	\end{align*}
	which implies that, for any $x, y \in \Sigma _i(\epsilon )$, for all large enough $i$,
	\begin{align}\label{bd-vs-total}
		\frac{1}{2}\hat{d}_{h_i, \mathcal{E}_i}(x,y) \leq \hat{d}_{g_i, \Sigma _i(\epsilon )}(x,y) \leq C\hat{d}_{h_i, \mathcal{E}_i}(x,y)+ \Psi (\varepsilon _i|\epsilon ).
	\end{align}

	Now we prove (\ref{total}). It is enough to show that for any fixed $D>0$ and any $x, y \in \mathcal{E}_i(\epsilon ) \cap B(q_i, D)$, 
	$$|\hat{d}_{g_i, \mathcal{E}_i(\epsilon )}(x,y) - d_{g_{\mathrm{Sch}}}(x,y) | \to 0 \text{ as } i\to \infty. $$
	It's easy to check that $d_{g_{\mathrm{Sch}}}(x,y) \leq \lim_{i\to \infty} \hat{d}_{g_i, \mathcal{E}_i(\epsilon )}(x,y)$. In the following, we will show the other inequality.
	
	For any fixed $\delta _0>0$. We firstly assume that $d_{g_{\mathrm{Sch}}}(x,y) \geq \delta _0$, $d_{g_{\mathrm{Sch}}} (x, \partial M_{\mathrm{Sch}}(\epsilon ) ) \geq \delta _0$ and $d_{g_{\mathrm{Sch}}}(y, \partial M_{\mathrm{Sch}}(\epsilon ) ) \geq \delta _0$. If $\gamma $ is a $g_{\mathrm{Sch}}$-geodesic between $x, y$, then by the geometry of Schwarzschild metric, $d_{g_{\mathrm{Sch}}}(\gamma , \partial M_{\mathrm{Sch}}(\epsilon ) ) \geq \delta _0$. For any $0< \delta \ll\delta _0$, we can use piecewise line segments $\{\gamma _j\} _{j=1}^{N(\delta ,D)} $ to approximate $\gamma $ such that $ \Sigma_{j=1}^{N} \mathrm{Length}_{g_{\mathrm{Sch}}}(\gamma _j) \leq  \mathrm{Length}_{g_{\mathrm{Sch}}}(\gamma )  + \Psi (\delta) ,$ and $\mathrm{Length}_{\mathrm{Eucl}}(\gamma _j) \geq \delta $. For each $\gamma _j$ with $\gamma _j(0)= x_j,  \gamma _j(1) = y_j$, from the proof of \cite[Lemma 4.5]{DS'23}, we can find peturbed points $x_j', y_j'$ such that the straight line segment $\tilde{\gamma} _j$ between $x_j', y_j'$ lies in $\mathcal{E}_i(\epsilon )$ and $d_{\mathrm{Eucl}}(x_j, x_j') + d_{\mathrm{Eucl}}(y_j, y_j') \leq \Psi (\varepsilon _i)$. Then
\begin{align*}
	\hat{d}_{g_i, \mathcal{E}_i(\epsilon )}(x,y) &\leq \sum_{j} \hat{d}_{g_i, \mathcal{E}_i(\epsilon )}(x_j', y_j') + C \sum_{j} ( d_{\mathrm{Eucl}}(x_j, x_j') + d_{\mathrm{Eucl}}(y_j, y_j') )\\
						     &\leq \sum_{j} \mathrm{Length}_{g_i} (\tilde{\gamma} _j) +  \Psi (\varepsilon _i| \delta , D )\\
						     & \leq (1+ \Psi (\varepsilon _i|D) ) \sum_{j}\mathrm{Length}_{g_{\mathrm{Sch}}}(\gamma_j) + \Psi (\varepsilon _i| \delta ,D).
\end{align*}
Taking $i\to \infty$, we have 
\begin{align*}
	\lim_{i\to \infty}\hat{d}_{g_i, \mathcal{E}_i(\epsilon )}(x,y) \leq d_{g_{\mathrm{Sch}}}(x,y) + \Psi (\delta ).
\end{align*}
Taking $\delta \to 0$ gives 
\begin{align}\label{dlt-0-dis}
\lim_{i\to \infty}\hat{d}_{g_i, \mathcal{E}_i(\epsilon )}(x,y) \leq d_{g_{\mathrm{Sch}}}(x,y).
\end{align}

If $d_{g_{\mathrm{Sch}}}(x,y) \leq \delta _0$ and $d_{g_{\mathrm{Sch}}}(x, \partial M_{\mathrm{Sch}}(\epsilon ) ) \leq \delta _0$, then $d_{\mathrm{Eucl}}(x, \partial M_{\mathrm{Sch}}(\epsilon ) ) \leq \delta _0$ and $d_{\mathrm{Eucl}}(x,y) \leq \delta _0$ . We can take an almost $\hat{d}_{h_i, \mathcal{E}_i}$-geodesic $\gamma \subset \mathcal{E}_i $ between $x,y$, so $\mathrm{Length}_{h_i}(\gamma ) \leq 2 \delta _0$. If $\gamma \subset \mathcal{E}_i(\epsilon )$, we have $\hat{d}_{g_i, \mathcal{E}_i(\epsilon )}(x,y) \leq C \delta _0$; otherwise, let $x'$ be the first intersection point of $\gamma $ and $\Sigma _i(\epsilon )$ and $y'$ be the last intersection point. By (\ref{bd-vs-total}), we have
$$\hat{d}_{g_i, \mathcal{E}_i(\epsilon )}(x', y') \leq \hat{d}_{g_i, \Sigma _i(\epsilon )}(x',y') \leq C \hat{d}_{h_i, \mathcal{E}_i}(x',y') + \Psi (\varepsilon _i|\epsilon ) \leq C \delta _0 + \Psi (\varepsilon _i|\epsilon ).$$ 
So 
$$\hat{d}_{g_i, \mathcal{E}_i(\epsilon )}(x,y) \leq \hat{d}_{g_i, \mathcal{E}_i(\epsilon )}(x,x') + \hat{d}_{g_i, \mathcal{E}_i(\epsilon )}(x',y') + \hat{d}_{g_i, \mathcal{E}_i(\epsilon )}(y',y) \leq C \delta _0 + \Psi (\varepsilon _i|\epsilon ).$$
This shows that the pointed Gromov-Hausdorff distance
\begin{align*}
	d_{pGH}((\mathcal{E}_i(\epsilon ), \hat{d}_{g_i, \mathcal{E}_i(\epsilon )}, q_i), (\mathcal{E}_i(\epsilon ) \cap \{ x: d_{g_{\mathrm{Sch}}}(x, \partial M_{\mathrm{Sch}}(\epsilon ) )\geq \delta _0\} , &\hat{d}_{g_i, \mathcal{E}_i(\epsilon )}, q_i) )\\
	&\leq C \delta _0 + \Psi (\varepsilon _i|\epsilon ).
\end{align*}
Together with (\ref{dlt-0-dis}), we have 
\begin{align*}
	d_{pGH}( (\mathcal{E}_i(\epsilon ) , \hat{d}_{g_i, \mathcal{E}_i(\epsilon )}, q_i), ( M_{\mathrm{Sch}}(\epsilon ), d_{g_{\mathrm{Sch}}}, x_o(\epsilon ) ) ) \leq C \delta _0 + \Psi (\varepsilon _i| \epsilon ).
\end{align*}
Choosing a sequence $\delta _0 \to 0$ and by a diagonal argument, for a subsequence $i\to \infty$, we get the conclusion on the pointed Gromov-Hausdorff convergence in (\ref{total}). 

Since the Hausdorff measure induced by $\hat{d}_{g_i, \mathcal{E}_i(\epsilon )}$ and $\hat{d}_{g_i, \Sigma_i(\epsilon )}$ are the same as the volume element $\mathrm{dvol}_{g_i}$ and the area element $\mathrm{dA}_{g_i}$ respectively, together with the isoperimetric inequality, it's standard to check that these measures also converge weakly (cf. \cite[Page 22]{DS'23}). In particular, $\Area_{g_i}(\Sigma _i(\epsilon ) ) \to \Area_{g_{\mathrm{Sch}}}(\partial M_{\mathrm{Sch}}(\epsilon ) )$.

\end{proof}
\begin{comment}
By our construction, for all large enough $i$, using the uniform convergence of $f_i \to f_\infty$, the Euclidean Hausdorff distance between $\mathcal{E}_i(\epsilon )$ and $M_i \cap \mathcal{E}_i$ is bounded by $C\cdot \epsilon $, so
\begin{align*}
	d_{pGH}\left( (\mathcal{E}_i(\epsilon _i), \hat{d}_{g_i, \mathcal{E}_i(\epsilon _i)}, q_i) , (M_i \cap \mathcal{E}_i, \hat{d}_{g_i}) \right) \leq C\cdot \epsilon ,
\end{align*}
\end{comment}

Choosing a sequence $\epsilon _i \to 0$, and using a diagonal argument, we can take a subsequence such that 
\begin{align*}
	(\mathcal{E}_i(\epsilon _i), \hat{d}_{g_i, \mathcal{E}_i(\epsilon _i)}, q_i) \to (M_{\mathrm{Sch}}, d_{g_{\mathrm{Sch}}}, x_o)
\end{align*}
in the pointed measured Gromov-Hausdorff topology, and 
\begin{align*}
	(\Sigma _i(\epsilon _i), \hat{d}_{g_i, \Sigma _i(\epsilon _i)}) \to (\partial M_{\mathrm{Sch}}, \hat{d}_{g_{\mathrm{Sch}}, \partial M_{\mathrm{Sch}}})	
\end{align*}
in the measured Gromov-Hausdorff topology.

We take $E_i = \mathcal{E}_i(\epsilon _i)$, $\Sigma _i^1 = \Sigma _i(\epsilon _i)$ and $\Sigma _i^2 = \partial \mathcal{E}_i \cap \{f_\infty \geq \frac{1}{2} + 3 \epsilon _i\} $. This completes the proof.

\end{proof}

Finally, we prove the stability of the Penrose inequality as a corollary of the stability of the mass-capacity inequality.

\begin{proof}[Proof of Theorem \ref{Penrose-stability}]
	The case when $A_0=0$ was proved in \cite{DS'23}. Let $A_0 > 0$ be a fixed constant and $(M_i^{3}, g_i)$ be a sequence of one-ended asymptotically flat $3$-manifolds with nonnegative scalar curvature, whose boundaries are compact outermost minimal surfaces with area $A_i \to A_0$. Suppose that the ADM mass $m(g_i)$ converges to $ \sqrt{\frac{A_0}{16 \pi }} $.

	As proved in \cite{Bray'01} (see also the remark at the end of Section \ref{capacity}), for each $i$, we can find a sequence of conformal metrics $g_i(t)$  and smooth subset $M_i(t) \subset M_i$ such that $(M_i(t), g_i(t))$ is an one-ended asymptotically flat $3$-manifold with nonnegative scalar curvature and an outermost minimal boundary $\Sigma _i(t)$. From \cite[Theorem 4]{Bray'01}, for some large enough $T_i>0$, for all $t>T_i$, $M_i(t)$ is diffeomorphic to $\mathbb{R}^3 \setminus \{0\} $. In particular, $\Sigma _i(t)$ is connected. Moreover, 
	$$\mathrm{Area}_{g_i(t)}(\Sigma _i(t) ) = A_i,\ \  m(g_i ) \geq m(g_i(t)) \to  \sqrt{\frac{A_i}{16 \pi }} \text{ as } t\to \infty .$$

From \cite[Section 7]{Bray'01}, we know that for a.e. $t$, 
$$m'(g_i(t)) = 2\left(\mathcal{C}(\Sigma_i(t), g_{i}(t ) ) - m(g_i(t) ) \right) \leq 0.$$
So we can find $t_i \in (T_i, T_i+1)$ such that 
\begin{align*}
	2\left( m(g_i(t_i) ) - \mathcal{C}(\Sigma_i (t_i), g_i(t_i) ) \right) \leq m(g_i( T_i) ) - m(g_i (T_i+1) ) \leq m(g_i) - \sqrt{\frac{A_i}{16 \pi }} .
\end{align*}

Then we can apply Theorem \ref{mass-cap-stability} to $(M_i(t_i), g_i(t_i) )$. The conclusion follows if we take $N_i$ to be the good region $E_i$ associated with $(M_i(t_i), g_i(t_i) )$.

\end{proof}

\appendix

\section{GH convergence modulo negligible domains in general dimensions}\label{appendix}
In this appendix, we provide a detailed proof of Theorem \ref{GH-spikes} for reader's convenience. In fact, we will prove a stronger quantitative version of the theorem. The proof involves a slight modification of the arguments presented in \cite{DS'23}, supplemented by an induction argument.

To make the proof clearer, we will prove the theorem for subsets of $\mathbb{R}^n$ in the following. Note that the same proof also applies to the general case (see also the remark at the end of this section). 

\begin{thm}
	Assume that $n \geq 2$, $Y_i \subset \mathbb{R}^n$ is a sequence of domains with smooth compact boundaries $\Sigma _i$, $Y_i$ contains the end of $\mathbb{R}^n$, and  $\mathcal{H}^{n-1}(\Sigma _i) \to 0$, then there exists a sequence of smooth closed subsets $Y_i'' \subset Y_i$ such that $\mathcal{H}^{n-1}(\partial Y_i'') \to 0$ and for any base point $q_i \in Y_i''$,
	\begin{align*}
		(Y_i'', q_i, \hat{d}_{\mathrm{Eucl}, Y_i''}) \xrightarrow{pmGH} (\mathbb{R}^n, 0, \hat{d}_{\mathrm{Eucl}}). 
	\end{align*}
Moreover, we have a quantitative version: there exists $ 0< \varepsilon (n) \ll 1$ so that if $Y \subset \mathbb{R}^n$ is a domain with smooth compact boundary $\Sigma $, $Y$ contains the end of $\mathbb{R}^n$, and $\mathcal{H}^{n-1}(\Sigma ) \leq \varepsilon(n) $, then we can find a perturbation $Y'' \subset Y$, such that 
\begin{align}
	\mathcal{H}^{n-1}(\partial Y'') \leq (\mathcal{H}^{n-1}(\Sigma ) ) ^{ 1- 10^{-4}n^{-1}},
\end{align}
and for any base point $q \in Y''$,
\begin{align}\label{explicit error}
	d_{pGH}( (Y'', \hat{d}_{Y''}, q), (\mathbb{R}^n, \hat{d}_{\mathrm{Eucl}},0) ) \leq (\mathcal{H}^{n-1}(\Sigma ) )^{2^{-n}}.
\end{align}
\end{thm}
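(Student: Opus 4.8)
The plan is to prove the quantitative statement by induction on $n\ge 2$, writing $\varepsilon:=\mathcal H^{n-1}(\Sigma)$ throughout. Three of the four requirements are soft. The inclusion $Y''\subset\mathbb R^n$ immediately gives $\hat d_{\mathrm{Eucl},Y''}(x,y)\ge|x-y|$. The relative isoperimetric inequality applied to the bounded set $\mathbb R^n\setminus Y$ (and later to $\mathbb R^n\setminus Y''$, whose perimeter we will have bounded) shows $\mathrm{Vol}(\mathbb R^n\setminus Y'')\le\Psi(\varepsilon)$ and that $\mathbb R^n\setminus Y''$ is covered by finitely many balls $B_k$ with $\sum_k r_k^{n-1}\le\Psi(\varepsilon)$; this yields both the weak convergence of $\mathcal H^n|_{Y''}$ and the fact that, after the translation $x\mapsto x-q$, the inclusion $B(q,1/\varepsilon')\cap Y''\hookrightarrow B(0,1/\varepsilon')$ is a GH-approximation on every ball. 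Thus the whole content is to produce $Y''$ with $\mathcal H^{n-1}(\partial Y'')\le\varepsilon^{1-10^{-4}n^{-1}}$ inside which any two points $x,y$ of $B(0,D)$ can be joined by a path of length $\le|x-y|+\Psi(\varepsilon|D)$.

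\textbf{Slicing and induction.} Foliate $\mathbb R^n=\mathbb R^{n-1}_{x'}\times\mathbb R_{x_n}$ by the hyperplanes $H_t=\{x_n=t\}$. The coarea inequality gives $\int_{\mathbb R}\mathcal H^{n-2}(\Sigma\cap H_t)\,dt\le\varepsilon$, so by Chebyshev the set $\mathcal B$ of heights with $\mathcal H^{n-2}(\Sigma\cap H_t)>\sqrt\varepsilon$ has $|\mathcal B|\le\sqrt\varepsilon$; for a.e.\ good $t\notin\mathcal B$ the slice $Y\cap H_t\subset H_t\cong\mathbb R^{n-1}$ is a domain containing the end, with smooth compact boundary $\Sigma\cap H_t$ of $(n-2)$-size $\le\sqrt\varepsilon$. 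Apply the inductive hypothesis in dimension $n-1$ to each such slice to get $Y''_t\subset Y\cap H_t$ with $\mathcal H^{n-2}(\partial_{H_t}Y''_t)\le(\sqrt\varepsilon)^{1-10^{-4}(n-1)^{-1}}$ and $d_{pGH}\big((Y''_t,\hat d,\cdot),(\mathbb R^{n-1},\hat d,0)\big)\le(\sqrt\varepsilon)^{2^{-(n-1)}}=\varepsilon^{2^{-n}}$; one then checks, by running the construction over a generic net of heights and interpolating and by cutting along the finitely many planes bounding $\mathcal B$, that these can be chosen to depend measurably on $t$ and to assemble, after smoothing corners and adding thin reconnecting tubes of arbitrarily small boundary area as in the proof of \cite[Lemma 4.3]{DS'23}, into a single smooth closed set $Y''\subset Y$ containing the end. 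Its boundary splits into the portion swept out by the surgery sets $\partial_{H_t}Y''_t$ over good $t$ in the bounded range where $\partial Y$ lives, of $(n-1)$-area $\lesssim\int(\sqrt\varepsilon)^{1-10^{-4}(n-1)^{-1}}\,dt$, plus the portion coming from the bad heights $\mathcal B$, controlled by $|\mathcal B|\le\sqrt\varepsilon$ times a uniform area bound, plus the negligible tubes; bookkeeping these gives $\mathcal H^{n-1}(\partial Y'')\le\varepsilon^{1-10^{-4}n^{-1}}$ for $\varepsilon\le\varepsilon(n)$, and the $\tfrac12$-loss in passing from $\varepsilon$ to $\sqrt\varepsilon$ in the height variable is precisely what produces the halving $2^{-(n-1)}\mapsto 2^{-n}$. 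For the base case $n=2$, $\Sigma$ is a finite union of curves of total length $\le\varepsilon$ and a direct argument (slicing by lines, the $n=1$ statement being vacuous) gives the claim with exponents $\varepsilon^{1/4}$ and $\varepsilon^{1-10^{-4}/2}$.

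\textbf{The path estimate.} Given $x,y\in Y''\cap B(0,D)$, approximate $[x,y]$ by finitely many short subsegments; for each, a Fubini/transversality argument lets one slide its endpoints a distance $\le\Psi(\varepsilon)$ to points lying on good slices $H_t$ and inside the corresponding $Y''_t$, and then the $(n-1)$-dimensional pGH approximation for $Y''_t$ just constructed joins them within $Y''_t\subset Y''$ by a path of nearly Euclidean length; concatenating and summing the $\Psi$'s over the finitely many pieces produces a path in $Y''$ of length $\le|x-y|+\Psi(\varepsilon|D)$. This is the perturbation step of \cite[Lemma 4.5]{DS'23} carried out slice by slice.

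\textbf{Main obstacle.} The crux is the simultaneous bookkeeping of the induction: the slice-wise surgeries must glue into a genuinely smooth hypersurface whose total $(n-1)$-area does not exceed $\varepsilon^{1-10^{-4}n^{-1}}$, \emph{and} together with the removal of the bad heights they must eliminate every length-metric obstruction of $Y$ --- spikes thin transverse to the $H_t$ are killed inside each slice, but spikes elongated in the $x_n$-direction survive any single slice and are handled only through the measurable-in-$t$ selection and interpolation --- all without the error exponents degrading faster than the asserted $2^{-n}$ and $1-10^{-4}n^{-1}$; tracking this loss carefully is where essentially all the effort lies.
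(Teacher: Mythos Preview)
Your inductive strategy is the same as the paper's in spirit, but the implementation has a genuine gap. The paper does \emph{not} foliate $\mathbb{R}^n$ globally by hyperplanes and apply the $(n-1)$-dimensional statement slice by slice. Instead it tiles $\mathbb{R}^n$ by cubes $\mathbf{C}_{\mathbf{k}}(\delta_1)$ of side $\delta_1=\eta^{10^{-4}n^{-1}}$, chooses by coarea \emph{one} good hyperplane $A_{\mathbf{k},\delta_1}(t_{\mathbf{k}})$ in each cube, applies the inductive hypothesis to that single slice, and then fills the cube by orthogonal projection off the good slice (cf.\ the definition of $\mathbf{C}_{\mathbf{k}}(\delta_1)'$). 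The set $Y'$ is the union of these modified cubes, and $Y''$ is obtained as a smooth $6\delta_0$-neighbourhood of $Y'$ inside $Y$; the bound $\mathcal H^{n-1}(\partial Y'')\le \delta_0^{-1}\eta$ comes from this neighbourhood construction (Lemma~\ref{ybis-n}), not from integrating slice boundaries over $t$.

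Your global-slicing scheme breaks at the area estimate. You bound the boundary by $\int(\sqrt\varepsilon)^{1-10^{-4}(n-1)^{-1}}\,dt$ over ``the bounded range where $\partial Y$ lives'', but that range is the $x_n$-diameter of $\Sigma$, which is not controlled by $\mathcal H^{n-1}(\Sigma)$: take $\Sigma$ to be $N$ tiny spheres spread along the $x_n$-axis with $N r^{n-1}=\varepsilon$ and $r\to 0$, so the $x_n$-extent is unbounded while $\varepsilon$ stays fixed. Even restricting to heights where $\Sigma\cap H_t\neq\emptyset$, the measure of that set is $\sim Nr=\varepsilon/r^{n-2}\to\infty$. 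The same example shows that ``assembling'' the $Y''_t$ into a single smooth $Y''$ with controlled boundary cannot be done by measurable selection alone: the $Y''_t$ from the inductive black box need not vary continuously (the cube scale in the $(n-1)$-dimensional construction is determined by $\mathcal H^{n-2}(\Sigma\cap H_t)$ and jumps with $t$). Your ``Main obstacle'' paragraph already flags this, but it is not a bookkeeping issue; it is the reason the paper localises to cubes first. The cube scale $\delta_1$ simultaneously makes each modified cube have small $\hat d$-diameter (Lemma~\ref{local small diam-n}) and ensures only finitely many cubes are touched, so the global area bound is $\delta_0^{-1}\eta$ rather than an uncontrolled integral.
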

\begin{proof}
	Notice that the case when $n=2$ is obvious by definition, and the case when $n=3$ has been proved in \cite{DS'23}. So we assume that $n \geq 4$. In the following, we will prove the quantitative version by induction and assume that the conclusion holds for all dimensions less than or equal to $n-1$.

We will find out $\varepsilon (n)$ by induction. 
Set $\eta := \mathcal{H}^{n-1}(\Sigma )\leq  \varepsilon (n) \ll 1$.
Let $\mathcal{W}$ be the domain bounded by $\Sigma $. By the isoperimetric inequality,
\begin{align*}
	\mathcal{H}^n(\mathcal{W}) \leq C(n) \mathcal{H}^{n-1}(\Sigma )^{\frac{n}{n-1}}\leq C(n) \eta^{\frac{n}{n-1}} \leq \eta .
\end{align*}
Take $\delta _0 = \eta^{10^{-2}n^{-1}},\delta_1 = \eta ^{10^{-4} n^{-1}}$. For any $\mathbf{k}= (k_1, k_2, \cdots, k_n) \in \mathbb{Z}^n,$ consider the closed cube $\mathbf{C}_{\mathbf{k}}(\delta_1 )$ defined by
\begin{align*}
	\mathbf{C}_{\mathbf{k}}(\delta_1 ):= [k_1\delta_1 , (k_1+1)\delta_1 ] \times \cdots \times [k_n \delta_1 , (k_n+1)\delta_1 ] \subset \mathbb{R}^n.
\end{align*}

For $t\in\mathbb{R}$, define the plane $$A_{\mathbf{k}, \delta _1}(t):= \{(x_1, \cdots, x_n) \in \mathbb{R}^n: x_n= (k_n +t) \delta _1\}.$$ 
By definition $\mathbf{C}_{\mathbf{k}}(\delta _1) \subset  \bigcup_{t\in[0,1]} A_{\mathbf{k}, \delta _1}(t)$.

By the coarea formula, there exists $t_{\mathbf{k}} \in (\frac{1}{2}, \frac{1}{2}+ \delta _0)$ such that $ \Sigma ^{n-2}(t_{\mathbf{k}}):= A_{\mathbf{k}, \delta _1}(t_{\mathbf{k}}) \cap \Sigma \cap \mathbf{C}_{\mathbf{k}}(\delta _1)$ consists of $(n-2)$-submanifolds and 
\begin{align}\label{small length k-n}
\begin{split}
	\mathcal{H}^{n-2}(\Sigma^{n-2}(t_{\mathbf{k}})) &\leq \frac{\mathcal{H}^{n-1}(\Sigma \cap \mathbf{C}_{\mathbf{k}}(\delta _1) )}{\delta _0 \delta _1}\\
											      &\leq 4 \eta ^{1-  (10n)^{-1}}.
\end{split}
\end{align}
If $A_{\mathbf{k}, \delta _1}(t_{\mathbf{k}}) \cap \Sigma \cap \partial \mathbf{C}_{\mathbf{k}}(\delta _1) \neq \emptyset$, then there exist components $ \mathcal{P}$ of $A_{\mathbf{k}, \delta _1}(t_{\mathbf{k}})\cap \partial \mathbf{C}_{\mathbf{k}}(\delta _1) \setminus \Sigma $ and a bounded open subset $Z(t_{\mathbf{k}}) \subset A_{\mathbf{k}, \delta _1}(t_{\mathbf{k}}) \cap \mathbf{C}_{\mathbf{k}}(\delta _1)$ such that $\mathcal{H}^{n-2}(\mathcal{P}) \leq 2(n-1) \cdot \mathcal{H}^{n-2}(\Sigma ^{n-2}(t_{\mathbf{k}}))$, $\partial Z(t_{\mathbf{k}}) \subset \mathcal{P} \cup \Sigma ^{n-2}(t_{\mathbf{k}})$, and $\Sigma ^{n-2}(t_{\mathbf{k}}) \subset \overline{Z(t_{\mathbf{k}})}$. In particular, 
\begin{align}\label{Zk boundary}
	\mathcal{H}^{n-2}(\partial Z(t_{\mathbf{k}}) ) \leq 2n \cdot \mathcal{H}^{n-2}(\Sigma ^{n-2}(t_{\mathbf{k}}) ) \leq 8n\cdot \eta ^{1- (10n)^{-1}}.
\end{align}

Take $\varepsilon (n)$ small enough such that $8n\cdot \varepsilon (n)^{1- (10n)^{-1}} \leq \varepsilon (n-1)$. 
Applying the induction assumption to $A_{\mathbf{k}, \delta _1}(t_{\mathbf{k}}) \setminus Z(t_{\mathbf{k}})$ and using (\ref{Zk boundary}), 
we can find a perturbation $\tilde{\Sigma }^{n-2}(t_{\mathbf{k}}) \subset A_{\mathbf{k}, \delta _1}(t_{\mathbf{k}})$ of $\partial Z(t_{\mathbf{k}}) $ satisfying the following properties: 
\begin{itemize}
	\item let $\tilde{Z}(t_{\mathbf{k}}) \subset A_{\mathbf{k}, \delta _1}(t_{\mathbf{k}})$ be the domain bounded by $\tilde{\Sigma }^{n-2}(t_{\mathbf{k}})$, then $Z(t_{\mathbf{k}}) \subset \tilde{Z}(t_{\mathbf{k}})$, and $A_{\mathbf{k}, \delta _1}(t_{\mathbf{k}}) \setminus \tilde{Z}(t_{\mathbf{k}})$ contains the end of $\mathbb{R}^{n-1}$; 
	\item the area satisfies  \begin{align}\label{area-ind-n}
			\begin{split}
				\mathcal{H}^{n-2}(\tilde{\Sigma }^{n-2}(t_{\mathbf{k}}) )& \leq \mathcal{H}^{n-2}(\partial Z(t_{\mathbf{k}}) ) ^{1- 10^{-4}(n-1)^{-1}}\\
	&\leq ( 2 n \cdot \mathcal{H}^{n-2}( \Sigma ^{n-2}(t_{\mathbf{k}}) ) )^{1- 10^{-4}(n-1)^{-1}};
\end{split}
\end{align}
\item for any two points $x, y \in \tilde{Y}(t_{\mathbf{k}}):= A_{\mathbf{k}, \delta _1}(t_{\mathbf{k}}) \setminus \tilde{Z}(t_{\mathbf{k}})$, we have
	\begin{align}\label{Y-tld}
	|\hat{d}_{\tilde{Y}(t_{\mathbf{k}})}(x,y) - \hat{d}_{\mathbb{R}^{n-1}}(x,y) | \leq (8n \cdot \eta ^{1- (10n)^{-1}})^{2^{-(n-1)}} \leq 10^{-4} \eta ^{2^{-n}}.
\end{align}

\end{itemize}
Define $D_{\mathbf{k}}'':= \tilde{Z}(t_{\mathbf{k}}) \cap \mathbf{C}_{\mathbf{k}}(\delta _1)$, and $D_{\mathbf{k}}' := A_{\mathbf{k}, \delta _1}(t_{\mathbf{k}}) \cap \mathbf{C}_{\mathbf{k}}(\delta _1) \setminus \overline{D_{\mathbf{k}}''}$.
By the relative isoperimetric inequality and (\ref{small length k-n}), we know that
\begin{align}\label{aread''-n}
	\begin{split}
		\mathcal{H}^{n-1}(D_{\mathbf{k}}'') &\leq C(n)\cdot  \mathcal{H}^{n-2}(\tilde{\Sigma}^{n-2}(t_{\mathbf{k}}) ) ^{\frac{n-1}{n-2}}\\
					    &\leq C (n)\cdot  \mathcal{H}^{n-2}( \Sigma^{n-2}(t _{\mathbf{k}}) ) ^{ 1+ \frac{1- 10^{-4}}{n-2}}\\
					    &\leq C(n)\cdot \eta^{\frac{1}{4(n-2)}} \delta _0^{-1} \delta _1^{-1} \cdot  \mathcal{H}^{n-1}(\Sigma \cap \mathbf{C}_{\mathbf{k}}(\delta _1) )\\
				&\leq \mathcal{H}^{n-1}(\Sigma \cap \mathbf{C}_{\mathbf{k}}(\delta _1)).
\end{split}
\end{align}
We can take a finite covering $\{ B(x_\alpha , r_\alpha ): r_\alpha < \eta\} $ of $\overline{D_{\mathbf{k}}'}$ such that $\forall x \in B(x_\alpha , r_\alpha ) \cap D'_{\mathbf{k}}$, there is a line segment inside $D_{\mathbf{k}}'$ connecting $ x$ and $x_\alpha $. For any pair $x_\alpha , x_\beta $, let $\gamma _{\alpha , \beta } \subset \tilde{Y}(t_\mathbf{k}	) $ be an almost $\hat{d}_{\tilde{Y}(t_{\mathbf{k}})}$-geodesic between $x_\alpha$ and $x_\beta $.  Define $\tilde{D}_{\mathbf{k}}'$ by 
\begin{align*}
	\tilde{D}_{\mathbf{k}}' := D_{\mathbf{k}}' \cup \{\gamma _{\alpha , \beta }\} .
\end{align*}
In this way, using (\ref{Y-tld}), for any $x, y \in \tilde{D}_{\mathbf{k}}'$, we have a curve $\gamma \subset \tilde{D}_{\mathbf{k}}'$ connecting $x$ and $y$ such that
\begin{align}\label{dist-ind-n}
|\mathrm{Length}_{\mathrm{Eucl}}(\gamma ) - d_{\mathrm{Eucl}}(x,y)| \leq \eta + 10^{-4} \eta ^{2^{-n}} \leq 10^{-3}\eta ^{2^{-n}} .
\end{align}

For simplicity, we still denote $\tilde{D}_{\mathbf{k}}'$ by $D_{\mathbf{k}}'$.
 Let $\pi_{\mathbf{k}} : \mathbb{R}^n\to A_{\mathbf{k}, \delta _1}(t_{\mathbf{k}})$ be the orthogonal projection.

 Define 
 $$\mathbf{C}_{\mathbf{k}}(\delta _1)':= D_{\mathbf{k}}' \cup \left( \mathbf{C}_{\mathbf{k}}(\delta _1) \cap \pi_{\mathbf{k}}^{-1} (D_{\mathbf{k}}' \setminus \pi_\mathbf{k}(\Sigma \cap \mathbf{C}_\mathbf{k}(\delta _1) ) ) \right) .$$

 We then proceed by following the same arguments as presented in \cite[Section 4]{DS'23}. The main difference here is that we need to explicitly compute the quantitative upper bounds for the purpose of induction. Therefore, we will omit most of the proofs, providing them only when modifications are necessary.

\begin{lem}[{\cite[Lemma 4.3]{DS'23}}]\label{ooo-n}
	$\mathcal{H}^{n}( \mathbf{C}_{\mathbf{k}}(\delta _1)\setminus \mathbf{C}_{\mathbf{k}}(\delta _1)') \leq 2 \delta _1 \mathcal{H}^{n-1}(\Sigma \cap \mathbf{C}_{\mathbf{k}}(\delta _1) ) \leq 2\eta^{1+ 10^{-4}n^{-1}}.$	
\end{lem}

\begin{lem}[{\cite[Lemma 4.4]{DS'23}}]
	$\mathbf{C}_{\mathbf{k}}(\delta _1)'$ is path connected and $\mathbf{C}_{\mathbf{k}}(\delta _1)' \subset Y$.
\end{lem}	
	
Define 
	$$
	Y':= \cup _{\mathbf{k}\in \mathbb{Z}^n}\mathbf{C}_{\mathbf{k}}(\delta _1)' \subset Y.
	$$ 
	Notice that when $|\mathbf{k}|$ is big enough, one can certainly ensure that $\mathbf{C}_{\mathbf{k}}(\delta _1)'= \mathbf{C}_{\mathbf{k}}(\delta _1)$, so that $Y\setminus Y'$ is a bounded set.

	For any subset $V \subset Y$, let $V_t$ be the $t$-neighborhood of $V$ inside $(Y, \hat{d}_{\mathrm{Eucl},Y})$ in terms of the length metric $\hat{d}_{\mathrm{Eucl},Y}$, i.e.
	$$
	V_{t }:= \{y \in Y: \exists z \in V \text{ such that }\hat{d}_{\mathrm{Eucl},Y}(y, z) \leq t  \} .
	$$ 
	So $(Y')_t$ is the $t$-neighborhood of $Y'$ inside $(Y, \hat{d}_{\mathrm{Eucl}, Y})$.

	\begin{lem}[{\cite[Lemma 4.5]{DS'23}}] \label{ybis-n}
	There exists a closed cubset $Y''$ with smooth boundary such that $Y'\subset Y'' \subset (Y')_{6 \delta _0} $,
	$$
	\mathcal{H}^{n-1}(\partial Y'' ) \leq \delta _0^{-1} \mathcal{H}^{n-1}(\partial Y) \leq \eta^{1- 10^{-2}n^{-1}},
	$$ 
	and
	$Y''$ is contained in the $6\delta_0$-neighborhood of $Y'$ inside $Y''$, with respect to its length metric $\hat{d}_{\mathrm{Eucl}, Y''}$.
	
	%for any base point $q \in Y' $,
	%$$
	%d_{pGH}( ( Y', \hat{d}_{\mathrm{Eucl}, Y''}, q), (Y'' , \hat{d}_{\mathrm{Eucl},Y''}, q) ) \leq 6 \delta _0,
	%$$ 
	%and 
	%$$
	%d_{pGH}( (Y', d_{\mathrm{Eucl}}, q), (Y'', d_{\mathrm{Eucl}}, q) ) \leq 6 \delta _0.
	%$$ 

\end{lem}

Let $Y''$ be as in Lemma \ref{ybis-n}.
Recall that $\hat{d}_{\mathrm{Eucl},Y''}$ is defined as the length metric on $Y''$ induced by $g_{\mathrm{Eucl}}$. Since $Y' \subset Y'' \subset Y$, we have $d_{\mathrm{Eucl}} \leq \hat{d}_{\mathrm{Eucl},Y} \leq \hat{d}_{\mathrm{Eucl},Y''}$. 

\begin{lem}[{\cite[Lemma 4.6]{DS'23}}]\label{local small diam-n}
	$\diam_{\hat{d}_{\mathrm{Eucl}, Y''}}(\mathbf{C}_{\mathbf{k}}(\delta _1)' ) \leq (n+2) \delta _1 + 10^{-3}\eta ^{2^{-n}}.$
\end{lem}
\begin{proof}
	For any two points $x_1, x_2 \in \mathbf{C}_{\mathbf{k}}(\delta _1)'$, let $L_{x_1}, L_{x_2}$ be the line segments inside $\mathbf{C}_{\mathbf{k}}(\delta _1)$ through $x_1, x_2$ and orthogonal to $A_{\mathbf{k},\delta _1}(t_{\mathbf{k}})$ respectively. Let $x_1' = L_{x_1}\cap D_{\mathbf{k}}', x_2' = L_{x_2}\cap  D_{\mathbf{k}}'$. Then 
	by (\ref{dist-ind-n})
	we can find a curve $\gamma $ between $x_1', x_2'$ inside $D_{\mathbf{k}}'$ such that 
\begin{align*}
	\Length_{\mathrm{Eucl}}(\gamma ) &\leq d_{\mathrm{Eucl}}(x_1', x_2') + 10 ^{-3} \eta ^{2^{-n}}.
\end{align*}
	Consider the curve $\tilde{\gamma }$ consisting of three parts: the line segment $[x_1x_1']$ between $x_1, x_1'$, $\gamma $, and the line segment $[x_2'x_2]$ between $x_2', x_2$. We have $\tilde{\gamma } \subset \mathbf{C}_{\mathbf{k}}(\delta _1)' \subset Y'$, so
	$$
	\hat{d}_{\mathrm{Eucl}, Y''}(x_1, x_2) \leq L_{\mathrm{Eucl}}(\tilde{\gamma }) \leq (n+2) \delta _1 + 10^{-3}\eta ^{2^{-n}}.
	$$ 
\end{proof}

\begin{lem}[{\cite[Lemma 4.7]{DS'23}}] \label{ajout-n}
	For any base point $q \in Y'$ and any $D>0$,
	$$
	d_{pGH}( (Y' \cap B_{\mathrm{Eucl}}(q, D), \hat{d}_{\mathrm{Eucl},Y''},q), (Y' \cap B_{\mathrm{Eucl}}(q, D), d_{\mathrm{Eucl}},q) ) \leq 10^{-2}\eta ^{2^{-n}}.
$$ 
\end{lem}
\begin{proof}
	The same proof of \cite[Lemma 4.7]{DS'23} shows that  
	\begin{align*}
		d_{pGH}( (Y' \cap B_{\mathrm{Eucl}}(q, D), \hat{d}_{\mathrm{Eucl},Y''},q),& (Y' \cap B_{\mathrm{Eucl}}(q, D), d_{\mathrm{Eucl}},q) )\\
		&\leq 4 \cdot ( (n+2) \delta _1 + 10^{-3} \eta ^{2^{-n}} )\\
		&\leq 10^{-2}\eta ^{2^{-n}}.
	\end{align*}
\end{proof}
So we have:
\begin{prop}[{\cite[Proposition 4.8]{DS'23}}]\label{Y'-GH}
For any base point $q \in Y''$ and any $D>0$,
	$$
	d_{pGH}( (Y'' \cap B_{\mathrm{Eucl}}(q, D), \hat{d}_{\mathrm{Eucl},Y''},q), (Y''\cap B_{\mathrm{Eucl}}(q,D) , d_{\mathrm{Eucl}},q) ) \leq \frac{1}{4}\eta ^{2^{-n}}.
	$$ 
\end{prop}

For any $p \in Y'' $ and $D>0$, denote by $\hat{B}_{Y''}(p, D)$ the geodesic ball in $(Y'', \hat{d}_{\mathrm{Eucl}, Y'' })$, that is,
$$
\hat{B}_{Y''}(p,D):= \{x \in Y'' : \hat{d}_{\mathrm{Eucl}, Y'' }(p,x) \leq D\} .
$$ 

\begin{lem}[{\cite[Lemma 4.9]{DS'23}}]\label{Y'-B-B-hat-n}
For any base point $q \in Y''$ and any $D>0$,
	$$
	d_{pGH}( (Y'' \cap B_{\mathrm{Eucl}}(q, D), \hat{d}_{\mathrm{Eucl},Y''},q), ( \hat{B}_{\mathrm{Eucl},Y''}(q,D) , \hat{d}_{\mathrm{Eucl}, Y'' },q) ) \leq \frac{1}{4}\eta ^{2^{-n}}.
	$$ 
\end{lem}

To compare those metric spaces to the Euclidean $3$-space $(\mathbb{R}^3, g_{\mathrm{Eucl}})$, we need the following lemma, which is a corollary of the fact that $\mathcal{H}^{n-1}(\partial Y'')\leq \delta _1^{-1} \eta$.
\begin{lem}[{\cite[Lemma 4.12]{DS'23}}] \label{11111}
	For any $q \in Y'' $ and $D>0$,
	$$
	d_{pGH}((Y'' \cap B_{\mathrm{Eucl}}(q,D), d_{\mathrm{Eucl}}, q), (B_{\mathrm{Eucl}}(0, D), d_{\mathrm{Eucl}}, 0) ) \leq \frac{1}{4}\eta ^{2^{-n}}.
	$$ 
\end{lem}
\begin{proof}
	Under a translation diffeomorphism, we can assume $q=0$. It suffices to show that $B_{\mathrm{Eucl}}(q, D)$ lies in a $\frac{1}{4} \eta ^{2^{-n}}$-neighborhood of $Y'' $. If that were not the case, there would be a $\mu > \frac{1}{4} \eta ^{2^{-n}}$,
	and
	an $x \in B_{\mathrm{Eucl}}(q, D)$ with $B_{\mathrm{Eucl}}(x, \mu ) \cap Y'' = \emptyset$. 
But from the isoperimetric inequality, we have 
	$$
	\mathcal{H}^{n}(\mathbb{R}^3 \setminus Y'' )\leq C(n) \mathcal{H}^{n-1}(\partial Y'')^{\frac{n}{n-1}} \leq C(n) \eta ^{(1- 10^{-4}n^{-1})\cdot \frac{n}{n-1}},
	$$ 
	which would imply that
	$$
\omega _n\cdot  \frac{\eta ^{\frac{n}{2^{n}}}}{4^{n}} \leq	\omega _n \mu ^n=\mathcal{H}^{n}(B_{\mathrm{Eucl}}(x,\mu ) ) \leq C(n) \eta ^{\frac{n}{2(n-1)}},
	$$ 
	a contradiction when $\eta \leq \varepsilon (n) \ll 1$.
\end{proof}

This concludes the proof of the theorem.
\end{proof}
\begin{rmk}
	Note that in the proof above, since the underlying metric is Euclidean metric, we don't need to use \cite[Lemma 4.10]{DS'23}, where the estimate depends on a diameter upper bound. In general case when the underlying metric is $C^{0}$ close to Euclidean metric, we need to apply \cite[Lemma 4.10]{DS'23} to each small cube whose diameter is much smaller than $1$, so the conclusion still holds except that we need to replace the explicit error term (\ref{explicit error}) by $d_{GH}( \hat{B}_{g,Y''}(q, D) , B_{\mathrm{Eucl}}(0,D) ) \leq \Psi (\eta |n, D), \forall D>0$.
\end{rmk}

\bibliographystyle{alpha}
\bibliography{./math}

\newcommand{\etalchar}[1]{$^{#1}$}
\begin{thebibliography}{AMMO22}

\bibitem[ADM61]{ADM'61}
Richard Arnowitt, Stanley Deser, and Charles~W Misner.
\newblock Coordinate invariance and energy expressions in general relativity.
\newblock {\em Physical Review}, 122(3):997, 1961.

\bibitem[AMMO22]{AMMO22}
Virginia Agostiniani, Carlo Mantegazza, Lorenzo Mazzieri, and Francesca Oronzio.
\newblock {R}iemannian {P}enrose inequality via nonlinear potential theory.
\newblock {\em arXiv preprint arXiv:2205.11642}, 2022.

\bibitem[AMO24]{AMO21}
Virginia Agostiniani, Lorenzo Mazzieri, and Francesca Oronzio.
\newblock A {G}reen’s function proof of the positive mass theorem.
\newblock {\em Communications in Mathematical Physics}, 405(2):54, 2024.

\bibitem[Bar86]{Bartnik'86}
Robert Bartnik.
\newblock The mass of an asymptotically flat manifold.
\newblock {\em Communications on pure and applied mathematics}, 39(5):661--693, 1986.

\bibitem[BHK{\etalchar{+}}23]{BHKKZ23}
Hubert Bray, Sven Hirsch, Demetre Kazaras, Marcus Khuri, and Yiyue Zhang.
\newblock Spacetime harmonic functions and applications to mass.
\newblock In {\em Perspectives in scalar curvature}, pages 593--639. World Scientific, 2023.

\bibitem[BKKS22]{BKKS'22}
Hubert~L Bray, Demetre~P Kazaras, Marcus~A Khuri, and Daniel~L Stern.
\newblock Harmonic functions and the mass of 3-dimensional asymptotically flat {R}iemannian manifolds.
\newblock {\em The Journal of Geometric Analysis}, 32(6):184, 2022.

\bibitem[BL09]{BrayLee'09}
Hubert~L Bray and Dan~A Lee.
\newblock On the {R}iemannian {P}enrose inequality in dimensions less than eight.
\newblock {\em Duke Math. J.}, 146(1):81--106, 2009.

\bibitem[BM08]{BM08}
Hubert Bray and Pengzi Miao.
\newblock On the capacity of surfaces in manifolds with nonnegative scalar curvature.
\newblock {\em Inventiones mathematicae}, 172:459--475, 2008.

\bibitem[Bra97]{Bray97}
Hubert~Lewis Bray.
\newblock {\em The {P}enrose inequality in general relativity and volume comparison theorems involving scalar curvature}.
\newblock PhD thesis, stanford university, 1997.

\bibitem[Bra01]{Bray'01}
Hubert~L Bray.
\newblock Proof of the {R}iemannian {P}enrose inequality using the positive mass theorem.
\newblock {\em Journal of Differential Geometry}, 59(2):177--267, 2001.

\bibitem[Don24]{Dong22}
Conghan Dong.
\newblock Some stability results of positive mass theorem for uniformly asymptotically flat 3-manifolds.
\newblock {\em Annales math{\'e}matiques du Qu{\'e}bec}, pages 1--25, 2024.

\bibitem[DS25]{DS'23}
Conghan Dong and Antoine Song.
\newblock Stability of {E}uclidean 3-space for the positive mass theorem.
\newblock {\em Inventiones mathematicae}, 239(1):287--319, 2025.

\bibitem[EG15]{EG15}
Lawrence~Craig Evans and Ronald~F Gariepy.
\newblock {\em Measure Theory and Fine Properties of Functions, Revised Edition}.
\newblock Textbooks in Mathematics. CRC Press, 2015.

\bibitem[HI01]{HI'01}
Gerhard Huisken and Tom Ilmanen.
\newblock The inverse mean curvature flow and the {R}iemannian {P}enrose inequality.
\newblock {\em Journal of Differential Geometry}, 59(3):353--437, 2001.

\bibitem[KKL21]{KKL21}
Demetre Kazaras, Marcus Khuri, and Dan Lee.
\newblock Stability of the positive mass theorem under {R}icci curvature lower bounds.
\newblock {\em arXiv preprint arXiv:2111.05202}, 2021.

\bibitem[KWY17]{KWY17}
Marcus Khuri, Gilbert Weinstein, and Sumio Yamada.
\newblock Proof of the {R}iemannian {P}enrose inequality with charge for multiple black holes.
\newblock {\em Journal of Differential Geometry}, 106(3):451--498, 2017.

\bibitem[LNN23]{LNN20}
Man-Chun Lee, Aaron Naber, and Robin Neumayer.
\newblock $d_p$ convergence and $\epsilon$-regularity theorems for entropy and scalar curvature lower bounds.
\newblock {\em Geometry \& Topology}, 27(1):227--350, 2023.

\bibitem[LS12]{LS12}
Dan~A Lee and Christina Sormani.
\newblock Near-equality of the {P}enrose inequality for rotationally symmetric {R}iemannian manifolds.
\newblock {\em Annales Henri Poincare}, 13(7):1537--1556, 2012.

\bibitem[LS14]{LS14}
Dan~A. Lee and Christina Sormani.
\newblock Stability of the positive mass theorem for rotationally symmetric {R}iemannian manifolds.
\newblock {\em Journal f{\"u}r die reine und angewandte Mathematik (Crelles Journal)}, 2014(686):187--220, 2014.

\bibitem[MS15]{ManScho15}
Christos Mantoulidis and Richard Schoen.
\newblock On the {B}artnik mass of apparent horizons.
\newblock {\em Classical and Quantum Gravity}, 32(20):205002, 2015.

\bibitem[Ste22]{Stern'22}
Daniel~L Stern.
\newblock Scalar curvature and harmonic maps to {$S^1$}.
\newblock {\em Journal of Differential Geometry}, 122(2):259--269, 2022.

\bibitem[SY79]{SY79}
Richard Schoen and Shing-Tung Yau.
\newblock On the proof of the positive mass conjecture in general relativity.
\newblock {\em Communications in Mathematical Physics}, 65:45--76, 1979.

\bibitem[SY81]{SY'81}
Richard Schoen and Shing-Tung Yau.
\newblock Proof of the positive mass theorem. {II}.
\newblock {\em Communications in Mathematical Physics}, 79:231--260, 1981.

\bibitem[Wit81]{Witten81}
Edward Witten.
\newblock A new proof of the positive energy theorem.
\newblock {\em Communications in Mathematical Physics}, 80(3):381--402, 1981.

\end{thebibliography}

\end{document}